\def\NZQ{\mathbb}               
\def\QQ{{\NZQ Q}}
\def\ZZ{{\NZQ Z}}
\def\RR{{\NZQ R}}
\def\PP{{\NZQ P}}
\newtheorem{Theorem}{Theorem}[section]
\newtheorem{Lemma}[Theorem]{Lemma}
\newtheorem{Corollary}[Theorem]{Corollary}
\newtheorem{Proposition}[Theorem]{Proposition}
\newtheorem{Remark}[Theorem]{Remark}
\newtheorem{Definition}[Theorem]{Definition}
\let\epsilon\varepsilon
\let\phi=\varphi
\let\kappa=\varkappa
\begin{document}

\title{Teissier's problem on inequalities of nef divisors over an arbitrary field}
\author{Steven Dale Cutkosky}
\thanks{Partially supported by NSF}

\address{Steven Dale Cutkosky, Department of Mathematics,
University of Missouri, Columbia, MO 65211, USA}
\email{cutkoskys@missouri.edu}

\begin{abstract}
  Boucksom, Favre and Jonsson establish in  \cite{BFJ} an analog of Diskant's inequality
in convex geometry for nef and big line bundles on a complete algebraic variety over an algebraically closed field of characteristic zero (Theorem F \cite{BFJ}), from which they deduce a  solution (Theorem D \cite{BFJ}) to a problem of Teissier (Problem B \cite{T}) on proportionality of nef divisors.  
In this paper we extend   these results to a complete variety over an arbitrary field $k$. 
\end{abstract}

\maketitle

\section{Introduction} 
In their beautiful paper \cite{BFJ},  Boucksom, Favre and Jonsson establish an analog of Diskant's inequality
in convex geometry for nef and big line bundles on a complete algebraic variety over an algebraically closed field of characteristic zero (Theorem F \cite{BFJ}), from which they deduce a  solution (Theorem D \cite{BFJ}) to a problem of Teissier (Problem B \cite{T}) on proportionality of nef divisors.   Teissier shows in \cite{T} that  a version of Bonnesen's inequality holds for a class of nef line bundles on a complete surface. Diskant's original inequality \cite{Di} is developed as a higher dimensional generalization of Bonnesen's inequality \cite{Bo}, and Boucksom, Favre and Jonsson establish this formula for nef and big line bundles on a complete variety of any dimension, over an algebraically closed field of arbitrary characteristic. This inequality is sufficiently strong to establish (in Theorem D \cite{BFJ}) that equality in the Khovanskii-Teissier inequalities (Corollary \ref{KT2}) for nef and big line bundles (on a complete variety over an algebraically closed field of characteristic zero) holds if and only if 
the line bundles are numerically proportional.

The purpose of  this paper is extend   these results to a complete variety over an arbitrary field $k$.

To obtain their Diskant inequality, Boucksom, Favre and Jonsson develop a theory of ``positive intersection products'', interpret the volume of a big line bundle as a positive intersection product (Theorem 3.1 \cite{BFJ}) and interpret the directional derivitive of the volume of a big divisor as a positive intersection product (Theorem A \cite{BFJ}). 
The positive intersection product is defined by  realizing the  product as a ``Weil Class'' in $N^p(\mathcal X)$. Here $\mathcal X$ is the Zariski Riemann manifold associated to a complete $d$-dimensional variety $X$. They realize $N^p(\mathcal X)$ for $0\le p\le d$ as an inverse limit of the finite dimensional real vector spaces $N^p(Y)$ where $Y$ is a nonsingular projective variety which birationally dominates $X$ by a morphism, and $N^p(Y)$ is the real vector space of numerical equivalence classes of codimension $p$-cycles on $Y$. 
The authors refer to Chapter 19 of \cite{F}, where the theory of numerical equivalence on nonsingular varieties is surveyed. 
$N^p(\mathcal X)$ is given the inverse limit topology (weak topology). They also develop a theory of ``Cartier classes'' on $\mathcal X$, by computing the direct limit $CN^p(\mathcal X)$ of the $N^p(Y)$, and giving them the direct limit topology (strong topology).
The  idea of the positive intersection product  of Cartier classes $\alpha_1,\ldots,\alpha_p \in CN^1(\mathcal X)$ is to take the limit
over all $Y$ of the ordinary intersection products $\beta_1\cdot\ldots\cdot\beta_p$ where $\beta_1,\ldots,\beta_p$ are Fujita approximations of $\alpha_1,\ldots,\alpha_p$ on $Y$. They develop the theory of these intersection products, and use this to prove the
Theorems A, D and F mentioned above.

We use the notation on schemes and varieties from Hartshorne \cite{H}. In particular,  a complete variety over a field $k$ is an integral $k$-scheme which is proper over $k$.

  We now discuss what the obstacles are to
extending the results of 
\cite{BFJ} to a complete variety over an arbitrary field.
The most daunting problem is that resolution of singularities is not known to be true for varieties of dimension larger than three over a field 
of positive characteristic. As such, we cannot use the sophisticated intersection theory from Chapter 19 \cite{F}. That theory  requires that the variety be smooth over an algebraically closed ground field. Certainly the assumption that $Y$ is smooth is necessary here.

The theory of numerical equivalence for line bundles has been developed for a proper scheme over an algebraically closed field in Kleiman's paper \cite{K}. The basic intersection theory here originates from an approach of Snapper \cite{Sn}, and is remarkably simple, so it is valid in a very high level of generality. A study of the paper \cite{K} shows that the  theory that we need for numerical equivalence of line bundles extends without difficulty to  an arbitrary field. We discuss this in Subsection \ref{SSInt}.

The volume of a line bundle $\mathcal L$ on a complete variety is defined as a lim sup,
 $$
\mbox{vol}_X(\mathcal L)=\limsup_{m\rightarrow\infty}\frac{\dim_k\Gamma(X,\mathcal L^m)}{m^d/d!}.
$$
This lim sup is actually a limit. When $k$ is an algebraically closed field of characteristic zero, this is shown in Example 11.4.7 \cite{L}, as a consequence of   Fujita Approximation \cite{F2} (c.f. Theorem 10.35 \cite{L}). The limit is established in 
 \cite{LM} and  \cite{Ta} when $k$ is algebraically closed of arbitrary characteristic. A proof over an arbitrary field is given in \cite{C}. In this paper, we deduce, in Theorem \ref{FApp},  Fujita Approximation over an arbitrary field from 
Theorem 3.3  \cite{LM} and Theorem 7.2 \cite{C}. We will need this to obtain the main results of this paper.

It is worth remarking that
when $k$ is an arbitrary field and $X$ is geometrically integral over $k$ we easily obtain that volume is a  limit by making the base change to
$\overline X=X\times_k\overline k$ where $\overline k$ is an algebraic closure of $k$. Then the volume of $\mathcal L$ (on $X$ over $k$) is equal to the volume of $\overline{\mathcal L}=\mathcal L\otimes_k\overline k$ (on $\overline X$ over $\overline k$). $\overline X$
is a complete $\overline k$ variety (it is integral) since $X$ is geometrically integral. Thus the conclusions of \cite{L}, \cite{LM} and \cite{Ta} are valid for $\overline{\mathcal L}$ (on the complete variety $\overline X$ over the algebraically closed field $\overline k$)
so that the  volume is a limit for $\mathcal L$ (on $X$ over $k$) as well. However, this argument is not applicable when $X$ is not geometrically integral. The most dramatic difficulty can occur when $k$ is not perfect, as there exist simple examples of irreducible
projective varieties which are not even generically reduced after taking the base change to the algebraic closure (we give a simple example below). 
In Example 6.3 \cite{C1} and Theorem 9.6 \cite{C} it is shown that for general graded linear series the limit does not always exist if 
$X$ is not generically reduced.

We now give an example, showing that even if $X$ is normal and $k$ is algebraically closed in the function field of $X$, then 
$X\times_k\overline k$ may not be generically reduced, where $\overline k$ is an algebraic closure of $k$.
Let $p$ be a prime number, $F_p$ be the field with $p$ elements and let $k=F_p(s,t,u)$ be a rational function field in three variables over $F_p$. Let $R$ be the local ring $R=(k[x,y,z]/(sx^p+ty^p+uz^p))_{(x,y,z)}$ with maximal ideal $m_R$.  $R$ is the localization of
$T=F_p[s,t,u,x,y,z]/(sx^p+ty^p+uz^p)$ at the ideal $(x,y,z)$, since $F_p[s,t,u]\cap(x,y,z)=(0)$. $T$ is nonsingular in codimension 1 by the Jacobian criterion over the perfect field $F_p$, and so
$T$ is normal by Serre's criterion. Thus $R$ is normal since it is a localization of $T$. Let $k'$ be the algebraic closure of $k$ in the quotient field $K$ of $R$. Then $k'\subset R$ since $R$ is normal. $R/m_R\cong k$ necessarily contains $k'$, so $k=k'$. However, we have that $R\otimes_k\overline k$ is generically not reduced, if $\overline k$ is an algebraically closure of $k$. Now taking $X$ to be a normal projective model of $K$ over $k$ such that $R$ is the local ring of a closed point of $X$, we have the desired example.
In fact, we have that $k$ is algebraically closed in $K$, but $K\otimes_k\overline k$ has nonzero nilpotent elements.

Lazarsfeld has shown that  the function ${\rm vol}_X$ on line bundles extends uniquely to a continuous  function on $N^1(X)$ which is homogeneous of degree 1. The proof in  Corollary 2.2.45 \cite{L} extends to the case of an arbitrary field.

In Theorem A of \cite{BFJ} it is proven that volume is continuously differentiable on the big cone of $X$, when the ground field $k$
is algebraically closed of characteristic zero. It is proven by Lazarsfeld and Mustata when the ground field $k$ is algebraically closed of arbitrary characteristic
in Remark 2.4.7 \cite{LM}. We establish that the volume is continuously differentiable when $X$ is a complete variety over an aribtrary field in Theorem \ref{TheoremA}.

In Example 2.7 of the survey  \cite{ELMNP1} by Ein, Lazarsfeld, Mustata, Nakamaye and Popa, it is shown that volume is not twice differentiable on the big cone of the blow up of $\PP^2$ at a $k$-rational point.

To define the positive intersection product over an arbitrary field we only need the intersection theory developed in \cite{K}.
We consider the directed system $I(X)$ of projective varieties $Y$ which have a birational morphism to $X$. On each $Y$ we consider
for $0\le p\le d=\dim X$ the finite dimensional real vector space 
$L^p(Y)$ of $p$-multilinear forms on $N^1(Y)$. We give $L^p(Y)$ the Euclidean topology, and take the inverse limit over $I(Y)$ 
$$
L^p(\mathcal X)=\lim_{\leftarrow}L^p(Y)
$$
and give it the strong topology (the inverse limit topology). $L^p(\mathcal X)$ is then a Hausdorff topological vector space.
We define the pseudo effective cone $\mbox{Psef}(L^p(Y))$ in $L^p(Y)$ to be the Zariski closure of the cone generated by the natural image of the $p$-dimensional closed subvarieties of $Y$. When $p=0$, we just take $L^0(Y)$ to be the real numbers, and the pseudoeffective cone to be the nonnegative real numbers. The inverse limit of the $\mbox{Psef}(L^p(Y))$ is then a closed convex and strict cone $\mbox{Psef}(L^p(\mathcal X))$ in $L^p(\mathcal X)$, allowing us to define a partial order $\ge$ on $L^p(\mathcal X)$.
In the case when $p=0$, we have that $L^0(\mathcal X)$ is the real numbers, and $\ge$ is just the usual order.

In Section \ref{SecPos}. we generalize the definition of the positive intersection product in \cite{BFJ}, essentially by defining the positive intersection product of $p$ big classes $\alpha_1,\ldots,\alpha_p$ in $L^{d-p}(\mathcal X)$
 as a limit of the intersection products by nef divisors $\beta_1,\ldots,\beta_{p}$ such that $\beta_i\le \alpha_i$ for all $i$ (where $\beta_i$ is in some $N^1(Y)$). This product can be considered as a multilinear form. We then show in the remainder of Section \ref{SecPos} that the properties of the positive intersection product which are obtained in \cite{BFJ} hold for our more general construction. Finally, we show that the proofs of the main theorems Theorem A, Theorem D and Theorem F of \cite{BFJ} extend with our
generalization of the positive intersection product. We make use of the ingenious manipulation of inequalities from their paper.

In Section \ref{SecVolume} we deduce Theorem \ref{TheoremA}, which is proven in Theorem A \cite{BFJ} when $k$ is algebraically closed of characteristic zero. 

In the final section, Section \ref{SecIneq}, we discuss  inequalities for nef line bundles, including the wonderful formulas of  Khovanskii and Teissier.  We establish Discant's inequality over an arbitrary field $k$ in Theorem \ref{TheoremF}. As an immediate corollary, we  obtain the following theorem, Theorem \ref{TheoremH}, which is an extension  of Proposition 3.2 \cite{T} to all dimensions and to arbitrary nef and big line bundles.

\begin{Theorem}(Theorem \ref{TheoremH}) Suppose that $\alpha,\beta\in N^1(X)$ are nef with $(\alpha^d)>0$, $(\beta^d)>0$   on a complete $d$-dimensional variety $X$ over a field $k$.
Then
$$
\frac{s_{d-1}^{\frac{1}{d-1}}-(s_{d-1}^{\frac{d}{d-1}}-s_0^{\frac{1}{d-1}}s_d)^{\frac{1}{d}}}{s_0^{\frac{1}{d-1}}}
\le r(\alpha;\beta)\le \frac{s_d}{s_{d-1}}\le\frac{s_1}{s_0}\le R(\alpha;\beta)\le 
\frac{s_d^{\frac{1}{d-1}}}{s_1^{\frac{1}{d-1}}-(s_1^{\frac{1}{d-1}}-s_d^{\frac{1}{d-1}}s_0)^{\frac{1}{d}}}
$$
where $s_i=(\alpha^i\cdot\beta^{d-i})$,
$$
r(\alpha;\beta)=\sup\{t\mid \alpha-t\beta\mbox{ is pseudo effective}\}\mbox{ is the ``inradius''}
$$
and
$$
R(\alpha;\beta)=\inf\{t\mid t\alpha-\beta\mbox{ is pseudo effective}\}\mbox{ is the ``outradius''}.
$$
\end{Theorem}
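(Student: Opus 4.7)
The theorem packages three earlier ingredients -- Diskant's inequality (Theorem~\ref{TheoremF}), the Khovanskii--Teissier inequalities (Corollary~\ref{KT2}), and the elementary fact that a pseudo effective class has nonnegative intersection with any nef class -- and the substantive analytic work has been done in Theorem~\ref{TheoremF}. What remains is essentially bookkeeping, so no serious obstacle arises at this stage.

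For the bounds involving $r(\alpha;\beta)$, the outer lower bound is exactly Theorem~\ref{TheoremF} applied to the ordered pair $(\alpha,\beta)$. For the inner bound $r(\alpha;\beta) \le s_d/s_{d-1}$: since $\mbox{Psef}$ is a closed cone, the supremum defining $r$ is attained, so $\alpha - r(\alpha;\beta)\beta$ is pseudo effective; pairing with the nef class $\alpha^{d-1}$ gives $s_d - r(\alpha;\beta)s_{d-1}\ge 0$, from which the bound follows after dividing. All denominators are nonzero because bigness of $\alpha$ and $\beta$ together with Khovanskii--Teissier forces $s_i > 0$ for $0\le i\le d$. The central link $s_d/s_{d-1}\le s_1/s_0$ is log-concavity from Corollary~\ref{KT2}: the inequalities $s_i^2 \ge s_{i-1}s_{i+1}$ rearrange to $s_{i+1}/s_i\le s_i/s_{i-1}$, so the ratios $s_{i+1}/s_i$ are nonincreasing in $i$, and telescoping from $i=0$ to $i=d-1$ yields the claim.

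Finally, the Diskant upper bound on $R(\alpha;\beta)$ and the inner bound $R(\alpha;\beta)\ge s_1/s_0$ are obtained by applying the preceding arguments to the swapped pair $(\beta,\alpha)$, whose intersection numbers are $s_i':= (\beta^i\cdot\alpha^{d-i})=s_{d-i}$. The Diskant lower bound on $r(\beta;\alpha)$ then yields the Diskant upper bound on $R(\alpha;\beta)$, and the inner bound $r(\beta;\alpha)\le s_0/s_1$ yields $R(\alpha;\beta)\ge s_1/s_0$, via the symmetry between inradius and outradius when the two divisors are interchanged. No additional ingredients are required.
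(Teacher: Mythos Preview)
Your argument is correct and matches the paper's approach: the paper packages the Diskant and nef-pairing bounds on $r(\alpha;\beta)$ as Theorem~\ref{TheoremG}, then proves Theorem~\ref{TheoremH} by applying Theorem~\ref{TheoremG} to the swapped pair and invoking $R(\alpha;\beta)=1/s(\beta,\alpha)$, with the middle link coming from the log-concavity~(\ref{eq100}), exactly as you outline. The one detail you leave implicit is the nonnegativity of $s_{d-1}^{1/(d-1)}-r(\alpha;\beta)\,s_0^{1/(d-1)}$ (the paper's (\ref{eq110})) needed to take the $d$-th root when solving Diskant's inequality for $r$; this follows immediately from your upper bound $r\le s_d/s_{d-1}$ combined with $s_{d-1}^d\ge s_0 s_d^{d-1}$.
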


As a consequence, we deduce in Theorem \ref{TheoremA} that equality of the Khovanskii-Teissier inequalities is equivalent to 
$\alpha$ and $\beta$ being proportional in $N^1(X)$, extending the result in Theorem A \cite{BFJ} to arbitrary fields.

\section{Preliminaries on intersection theory and associated cones}\label{SecPre}

\subsection{Intersection theory on schemes}\label{SSInt} In this subsection we suppose that $X$ is a $d$-dimensional proper scheme over a  field $k$.
We begin by recalling some results from  Kleiman's paper \cite{K}, and their extension to arbitrary fields (some of the following is addressed in \cite{L2} and \cite{K2}).  Given a coherent sheaf
$\mathcal F$ on $X$ whose support has dimension $\le t$, and invertible sheaves $\mathcal L_1,\ldots, \mathcal L_t$ on $X$, there is an intersection product
$$
(\mathcal L_1\cdot\ldots\cdot \mathcal L_t\cdot \mathcal F)
$$
on $X$. 
The Euler characteristic $\chi(\mathcal N)$ of a coherent sheaf $\mathcal N$ of $\mathcal O_X$-modules is defined as
$$
\chi_k(\mathcal N)=\sum_{i\ge 0}(-1)^i\dim_kH^i(X,\mathcal N).
$$
Let $\mathcal L_1,\ldots,\mathcal L_t$ be $t$ invertible sheaves on $X$. Then
$$
\chi_k(\mathcal F\otimes\mathcal L^{n_1}\otimes\cdots\otimes\mathcal L_t^{n_t})
$$
is a numerical polynomial (Snapper \cite{Sn}, page 295 \cite{K}). The intersection number
$$
(\mathcal L_1\cdot\ldots\cdot \mathcal L_t\cdot\mathcal F)
$$
is defined to be the coefficient of the monomial $n_1,\ldots,n_t$ in $\chi_k(\mathcal F\otimes\mathcal L^{n_1}\otimes\cdots\otimes\mathcal L_t^{n_t})$. This number depends on the  ground field $k$.

This product is characterized by the nice properties established in Chapter 1, Section 2 \cite{K}. 
We will write 
$$
(\mathcal L_1\cdot\ldots\cdot\mathcal L_d)= (\mathcal L_1\cdot\ldots\cdot\mathcal L_d\cdot X)=(\mathcal L_1\cdot\ldots\cdot\mathcal L_d\cdot\mathcal O_X).
$$

The following lemma uses the notation of Proposition I.2.4 \cite{K}.

\begin{Lemma}\label{PA} Suppose that $X$ is a projective scheme over a field $k$, $H$ is an ample Cartier divisor on $X$ and $\mathcal F$ is a nonzero coherent $\mathcal O_X$-module. Then there exists $s\in \ZZ_{>0}$ and $\Delta$  in the complete linear system $|\mathcal O_X(sH)|$ such that $\Delta\cap {\rm Ass}(\mathcal F)=\emptyset$.
\end{Lemma}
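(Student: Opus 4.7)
The plan is to reduce the statement to the homogeneous prime avoidance lemma applied to the graded section ring of $H$. Since $\mathcal{F}$ is coherent on the Noetherian scheme $X$, the set $\mathrm{Ass}(\mathcal{F}) = \{x_1, \ldots, x_n\}$ is finite. Writing $\sigma \in H^0(X, \mathcal{O}_X(sH))$ for a prospective global section with zero scheme $\Delta = (\sigma)_0$, the condition $\Delta \cap \mathrm{Ass}(\mathcal{F}) = \emptyset$ is equivalent to requiring that the image of $\sigma$ in the fiber $\mathcal{O}_X(sH) \otimes \kappa(x_i)$ be nonzero for each $i$. It therefore suffices to exhibit, for some $s > 0$, a global section of $\mathcal{O}_X(sH)$ not vanishing at any $x_i$.

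I would consider the section ring $R = \bigoplus_{d \ge 0} H^0(X, \mathcal{O}_X(dH))$ as a graded $k$-algebra. For each $i$, a choice of local generator of $\mathcal{O}_X(H)$ at $x_i$ identifies the evaluation maps at $x_i$ with a graded $k$-algebra homomorphism $R \to \kappa(x_i)[t]$ whose kernel $\mathfrak{p}_i$ is a homogeneous prime ideal of $R$ (primality follows because $\kappa(x_i)[t]$ is a domain). Since $H$ is ample, for each $i$ there exists $d > 0$ such that $\mathcal{O}_X(dH)$ is generated by its global sections at $x_i$; in particular some element of $R_d$ fails to vanish at $x_i$, so the positive part $R_+ = \bigoplus_{d > 0} R_d$ is not contained in any $\mathfrak{p}_i$.

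Invoking the homogeneous prime avoidance lemma (Bruns--Herzog, \emph{Cohen--Macaulay Rings}, Lemma 1.5.10), there is a homogeneous element $f \in R_+$ lying outside every $\mathfrak{p}_i$. Such an $f$ belongs to some $R_s$ with $s > 0$, and the corresponding effective divisor $\Delta = (f)_0 \in |\mathcal{O}_X(sH)|$ is disjoint from $\mathrm{Ass}(\mathcal{F})$ as required.

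The principal obstacle is accommodating finite base fields. Over an infinite $k$ one could argue directly: for $s \gg 0$, the subspace $U_i \subset H^0(X, \mathcal{O}_X(sH))$ of sections vanishing at $x_i$ is proper, and a vector space over an infinite field cannot be covered by finitely many proper subspaces. This argument breaks down over finite fields, but the homogeneous prime avoidance lemma---whose proof proceeds via suitably chosen products and powers of homogeneous elements rather than linear combinations---works uniformly over any base field and yields the desired $f$.
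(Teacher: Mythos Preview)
Your proof is correct and follows essentially the same strategy as the paper: both reduce the problem to the homogeneous prime avoidance lemma (Bruns--Herzog, Lemma 1.5.10). The only cosmetic difference is that the paper first replaces $H$ by a very ample multiple $rH$ so that $X=\mathrm{Proj}(S)$ for a standard graded ring $S$, writes $\mathcal F=\tilde M$, and avoids the primes in $\mathrm{Ass}(M)$, whereas you work directly in the section ring $R=\bigoplus_{d\ge 0}H^0(X,\mathcal O_X(dH))$ and encode the associated points of $\mathcal F$ as kernels of evaluation homomorphisms; the key input and the conclusion are the same.
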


\begin{proof} There exists $r\in \ZZ_{>0}$ such that $rH$ is very ample. Thus $X=\mbox{Proj}(S)$ where $S=\bigoplus_{n\ge 0}S_n$ is a standard graded, saturated $k$-algebra with $\mathcal O_X(rH)\cong \mathcal O_X(1)$. $\mathcal F\cong \tilde M$ is the sheafification of a finitely generated graded $S$-module $M$ such that $S_+=\bigoplus_{n>0}S_n$ is not an associated prime of $M$. By graded prime avoidance (c.f. Lemma 1.5.10 \cite{BH}) there exists a homogeneous element $h\in S_n$ for some $n$ such that $h\not\in P$ for any prime ideal
$P\in {\rm Ass}(M)$.  If $S_1$ is spanned by $x_0,\ldots,x_t$ as a $k$-vector space, then the effective Cartier divisor
$$
\Delta=\{(\mbox{Spec}(S_{(x_i)}),\frac{h}{x_i^n})\mid 0\le i\le t\}
$$
is linearly equivalent to $nrH$ and $\Delta\cap {\rm Ass}(\mathcal F)=\emptyset$.
\end{proof}

The following version of Bertini's theorem will be useful. The theorem follows from Theorem 3.4.10 and Corollary 3.4.14 \cite{FOV}.

\begin{Theorem}\label{Bertini}(Bertini's Theorem) Suppose that $X$ is a projective variety over an infinite field $k$ and $A$ is a very ample (integral) divisor on $X$. Then Bertini's theorems are valid for a generic member of $|\mathcal O_X(A)|$.
\end{Theorem}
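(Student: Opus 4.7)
The plan is to unpack what ``Bertini's theorems'' means in this context, namely the standard package of conclusions about a generic $D \in |\mathcal{O}_X(A)|$: that $D$ is reduced, that $D$ is irreducible when $\dim X \geq 2$, and that $D$ inherits the local properties of $X$ (smoothness, normality, Cohen--Macaulayness, etc.) away from the base locus of $|A|$. Since $A$ is very ample, $|A|$ is in particular basepoint-free, so the ``away from the base locus'' caveat is vacuous. My plan is to reduce each conclusion to the general statements in \cite{FOV}.

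First I would realize members of $|\mathcal{O}_X(A)|$ as hyperplane sections: the very ample divisor $A$ produces a closed immersion $X \hookrightarrow \PP^N$ with $N = \dim_k H^0(X,\mathcal{O}_X(A))-1$, and the parameter space for divisors in $|A|$ is the dual projective space $(\PP^N)^*$. Since $k$ is infinite, the $k$-rational points of $(\PP^N)^*$ are Zariski dense in every nonempty open subscheme. Consequently, if one can locate a nonempty Zariski-open subset $U \subset (\PP^N)^*$ such that every hyperplane parametrized by $U$ cuts out a section with the desired properties, then $U(k)$ is nonempty and the statement follows.

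Next I would invoke Theorem 3.4.10 of \cite{FOV} to produce such a $U$ over which the generic hyperplane section is irreducible (assuming $\dim X \geq 2$), and Corollary 3.4.14 of \cite{FOV} to produce such a $U$ for reducedness and the transfer of local properties. Intersecting the finitely many opens so obtained yields a single $U$ witnessing all the conclusions simultaneously, and picking any $k$-point of $U$ delivers the required $D \in |\mathcal{O}_X(A)|$.

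The main obstacle I would expect is that classical smoothness Bertini over a field of positive characteristic can fail when the linear system is inseparable, so one must confirm that the \cite{FOV} formulations sidestep this subtlety. Their results are characteristic-free and phrased for very ample linear systems on arbitrary projective schemes, with the separability difficulties absorbed into the notion of ``generic'' on the parameter space; the only hypotheses to verify are that $k$ is infinite (needed to pass from a dense open on the parameter space to an actual $k$-point) and that $|A|$ is basepoint-free (automatic from very ampleness). Thus the proof reduces to a citation, modulo checking these hypotheses.
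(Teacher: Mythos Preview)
Your proposal is correct and follows essentially the same route as the paper: the paper likewise reduces the statement to Theorem~3.4.10 and Corollary~3.4.14 of \cite{FOV}, and then observes that since $k$ is infinite the nonempty Zariski open $U\subset\PP^n_k$ parametrizing the good members of $|\mathcal O_X(A)|$ contains $k$-rational points (the paper cites Theorem~2.19 of \cite{J} for this). Your write-up is more expansive in unpacking what ``Bertini's theorems'' means and in flagging the positive-characteristic subtlety, but the argument is the same.
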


There exists a nontrivial Zariski open subset $U$ of a projective space $\PP^n_k$ parametrizing the complete linear system $|\mathcal O_X(A)|$, over which the desired Bertini conditions hold. With the assumption that $k$ is infinite, $U$ contains infinitely many $k$-rational points (c.f. Theorem 2.19 \cite{J}). The corresponding elements of $|\mathcal O_X(A)|$ are called ``generic members'' in \cite{FOV}.

$\mathcal L\in \mbox{Pic}(X)$ is said to be numerically equivalent to zero, written $\mathcal L\equiv 0$, if $(\mathcal L\cdot C)=0$ for all closed integral curves $C\subset X$. The intersection product is defined modulo numerical  equivalence.
Let $N^1(X)$ be the real vector space $(\mbox{Pic}(X)/\equiv)\otimes_{\ZZ}\RR$.

The following proposition extends to the case of an arbitrary field a classical theorem.

\begin{Proposition}\label{Picard} Suppose that $X$ is a proper scheme over a field $k$. Then $N^1(X)$ is a finite dimensional real vector space.
\end{Proposition}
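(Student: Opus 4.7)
The plan is to reduce finite dimensionality to the algebraically closed case by base change. I let $\overline{k}$ be an algebraic closure of $k$, set $\overline{X} = X \times_k \overline{k}$, and write $\pi: \overline{X} \to X$ for the projection. Since $X$ is proper over $k$, $\overline{X}$ is proper over $\overline{k}$, so Kleiman's finite-dimensionality result in \cite{K}, stated for proper schemes over an algebraically closed field, applies: $N^1(\overline{X})$ is a finite dimensional real vector space.

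The heart of the argument will be to show that $\pi^*: \mbox{Pic}(X) \to \mbox{Pic}(\overline{X})$ induces an injective linear map $N^1(X) \hookrightarrow N^1(\overline{X})$; finite dimensionality of $N^1(X)$ then follows at once. For a closed integral curve $C \subset X$ with base change $\overline{C} = C \times_k \overline{k}$, flatness of $\pi$ together with the identity $\dim_k V = \dim_{\overline{k}}(V \otimes_k \overline{k})$ yields
$$
\chi_k(C, \mathcal{L}^n|_C) = \chi_{\overline{k}}(\overline{C}, (\pi^*\mathcal{L})^n|_{\overline{C}})
$$
for every $n$, so comparing the coefficient of $n$ gives the intersection number identity $(\mathcal{L} \cdot C)_X = (\pi^*\mathcal{L} \cdot \mathcal{O}_{\overline{C}})_{\overline{X}}$. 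If $\mathcal{L} \not\equiv 0$ on $X$, I choose $C$ with $(\mathcal{L} \cdot C)_X \neq 0$; filtering $\mathcal{O}_{\overline{C}}$ by a composition series whose graded pieces are either structure sheaves of $1$-dimensional integral components of $\overline{C}$ or coherent sheaves of $0$-dimensional support, and invoking additivity of the intersection product in the sheaf argument (one of the basic properties from \cite{K}, Chapter 1, Section 2), forces at least one integral $1$-dimensional component $\overline{C}_i \subset \overline{X}$ to satisfy $(\pi^*\mathcal{L} \cdot \overline{C}_i)_{\overline{X}} \neq 0$. Hence $\pi^*\mathcal{L} \not\equiv 0$ on $\overline{X}$, giving the required injectivity.

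The step most likely to require care — rather than being a genuine obstacle — is verifying that Kleiman's result is available for $\overline{X}$: although $X$ is an integral scheme over $k$, the base change $\overline{X}$ may fail to be reduced or irreducible over $\overline{k}$ (as the generically non-reduced example earlier in the paper illustrates), so I need \cite{K} in enough generality to cover arbitrary proper schemes over an algebraically closed field. Modulo that, the argument is routine flat base change plus additivity of the intersection product on coherent sheaves.
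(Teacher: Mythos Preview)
Your proof is correct and follows the same overall strategy as the paper: base change to the algebraic closure, invoke Kleiman's Proposition IV.1.4 \cite{K} for $\overline X=X\times_k\overline k$ (which is indeed stated for arbitrary proper schemes over an algebraically closed field, so your caveat about $\overline X$ being possibly non-reduced and non-irreducible is not an obstacle), and then show that $\pi^*:N^1(X)\to N^1(\overline X)$ is injective.

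The only difference is in how injectivity is established. You go \emph{upward}: start from an integral curve $C\subset X$, base change to $\overline C$, use flat base change on Euler characteristics to get $(\mathcal L\cdot C)=(\pi^*\mathcal L\cdot\mathcal O_{\overline C})$, and then use additivity of the intersection product on a filtration of $\mathcal O_{\overline C}$ to extract an integral one-dimensional component of $\overline C$ on which $\pi^*\mathcal L$ has nonzero degree. The paper instead goes \emph{downward}: it begins with an integral curve $C\subset\overline X$, descends it to a finite field of definition $L/k$ to obtain $C'\subset X\times_kL$ with image $\gamma=g(C')\subset X$, and then combines flat base change with the projection formula (Proposition I.2.6 \cite{K}) to derive the explicit proportionality
\[
(f^*\mathcal L\cdot C)=\frac{[k(C'):k(\gamma)]}{[L:k]}\,(\mathcal L\cdot\gamma).
\]
Your route is a bit more streamlined for the injectivity statement actually needed; the paper's route yields a sharper numerical formula and the two-sided equivalence $f^*\mathcal L\equiv 0\Leftrightarrow\mathcal L\equiv 0$, at the cost of the field-of-definition maneuver.
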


\begin{proof} In the case when $k$ is algebraically closed, this is proven in Proposition IV.1.4 \cite{K}.

Let $\overline k$ be an algebraic closure of $k$. Let $C\subset X\otimes_k\overline k$ be a closed integral curve. Let $\{U_1,\ldots,U_n\}$ be an affine cover of $X$. Then $\{U_1\otimes_k\overline k,\ldots,U_n\times_k\overline k\}$ is an affine cover of $X\times_k\overline k$. There exists a finite extension field $L$ of $k$ (a field of definition of $C$) such that $\mathcal I_C|(U_i\otimes_k\overline k)$ is defined over $L$ for all $i$ (a set of generators of $\Gamma(U_i\times_k\overline k,\mathcal I_C)$
is contained in  $\Gamma(U_i,\mathcal O_X)\otimes_k L$ for all $i$). Let $C'\subset X\times_kL$ be this integral curve, so that $C=C'\times_L\overline k$. Let $g:X\times_kL\rightarrow X$ be the natural proper morphism, and let $\gamma=g(C')$.
Let $f:X\times_k\overline k\rightarrow X$ be the natural morphism. 

Suppose that $\mathcal L$ is a line bundle on $X$. We compute $(f^*\mathcal L\cdot C)$, taking $\overline k$ as the ground field of
$X\times_k\overline k$. By flat base change of cohomology (Proposition III.9.3 \cite{H}),
$$
\chi_L(g^*\mathcal L^n\otimes \mathcal O_{C'})=\chi_{\overline k}(f^*\mathcal L^n\otimes\mathcal O_C).
$$
Thus
$$
[L:k](f^*\mathcal L\cdot C)=(g^*\mathcal L\cdot C'),
$$
where $X\times_kL$ is regarded as a scheme over $k$. By Proposition I.2.6 \cite{K},
$$
(g^*\mathcal L\cdot C')=[k(C'):k(\gamma)](\mathcal L\cdot \gamma),
$$
where $k(\gamma)$ and $k(C')$ are the respective functions fields of $\gamma$ and $C'$.
Thus
\begin{equation}\label{eq120}
(f^*\mathcal L\cdot C)=\frac{[k(C'):k(\gamma)]}{[L:k]}(\mathcal L\cdot \gamma).
\end{equation}

Suppose that $\gamma$ is a closed integral curve on $X$. Apply (\ref{eq120}) to any of the finitely many integral curves $C$ such that $C\subset \gamma\times_k\overline k$ to obtain that for $\mathcal L\in \mbox{Pic}(X)$, $f^*\mathcal L\equiv 0$ on $X\times _k\overline k$ if and only if $\mathcal L\equiv 0$ on $X$. Thus there is a well defined inclusion of real vector spaces
$$
f^*:N^1(X)\rightarrow N^1(X\times_k\overline k),
$$
so that 
$$
\dim_{\RR}N^1(X)\le \dim_{\RR}N^1(X\times_k\overline k)<\infty.
$$
\end{proof}

We give $N^1(X)$ the Euclidean topology.

\subsection{$N^1(X)$ on a variety}
In this subsection we  suppose that $X$ is a $d$-dimensional complete  variety over a field $k$. Then $\mbox{Pic}(X)$ is isomorphic to the group of Cartier divisors $\mbox{Div}(X)$ on $X$.  The basic theory of $N^1(X)$ (written as $N^1(X)_{\RR}$) is developed in the first chapter of \cite{L}, in terms of Cartier divisors. We summarize a few of the concepts, which are valid over an arbitrary  base field  $k$. $D\in N^1(X)$ is $\QQ$-Cartier if $D$ is represented in $N^1(X)$ by a sum $D=\sum a_iE_i$ with $a_i\in \QQ$ and $E_i$ integral divisors.
An $\RR$-divisor $D$ is effective if it is represented by a sum $D=\sum a_iE_i$ where the $E_i$ are effective integral divisors and $a_i\in \RR_{\ge 0}$. It is ample if $D=\sum a_iA_i$ with 
$a_i\in \RR_{> 0}$ and $A_i$ ample integral divisors. It is nef (numerically effective) if $(D\cdot C)\ge 0$ for all closed integral curves $C$ on $X$. 

The ample cone $\mbox{Amp}(X)$ of $X$ is the convex cone in $N^1(X)$ of ample $\RR$-divisors, and the nef cone $\mbox{Nef}(X)$ is the convex cone in $N^1(X)$ of nef $\RR$-divisors. By Section 4 of \cite{K} and as exposed in Theorem 1.4.23 \cite{L}, we have  that $\mbox{Nef}(X)$ is closed and $\mbox{Amp}(X)$ is the interior of $\mbox{Nef}(X)$.

In Chapter 2 of \cite{L}, big and pseudoeffective cones are defined. 
An $\RR$-divisor $D$ on $X$ is big if $D=\sum a_iE_i$ where the $E_i$ are big integral divisors and $a_i\in \RR_{\ge 0}$.
The big cone $\mbox{Big}(X)$ is the convex cone in $N^1(X)$ of big $\RR$-divisors on $X$. The pseudoeffective cone $\mbox{Psef}(X)$ is the closure of the convex cone spanned by the classes of all effective $\RR$-divisors. In Theorem 2.2.26 \cite{L}, it is shown that
when $X$ is projective, $\mbox{Psef}(X)$ is the closure of $\mbox{Big}(X)$ and $\mbox{Big}(X)$ is the interior of $\mbox{Psef}(X)$.

Suppose that $X$ is projective. Since $\mbox{Amp}(X)$ is open, if $\alpha\in \mbox{Big}(X)$, then $\alpha$ has a representative
\begin{equation}\label{eq80}
\alpha=H+E
\end{equation}
in $N^1(X)$ where $H$ is an ample $\RR$-divisor and $E$ is an effective $\QQ$-divisor.

\subsection{partial orders on vector spaces}\label{SecPO}
Let $V$ be a  vector space and $C\subset V$ be a pointed (containing the origin) convex cone which is strict
($C\cap(-C)=\{0\}$). Then we have a partial order on $V$ defined by $x\le y$ if $y-x\in C$.

\subsection{Volume on a variety} The volume of a line bundle $\mathcal L$ on a $d$-dimensional complete variety over a field $k$ is 
$$
\mbox{vol}(\mathcal L)=\mbox{vol}_X(\mathcal L)=\limsup_{m\rightarrow\infty}\frac{\dim_k\Gamma(X,\mathcal L^m)}{m^d/d!}.
$$
This lim sup is actually a limit (shown in Example 11.4.7 \cite{L}  when the ground field $k$ is algebraically closed of characteristic zero, and in \cite{LM} or 
\cite{Ta} when $k$ is algebraically closed of arbitrary characteristic. A proof over an arbitrary field is given in \cite{C}).

Some important properties of volume are that $D\equiv D'$ implies $\mbox{vol}(D)=\mbox{vol}(D')$ (Proposition 2.2.41 \cite{L},  and
that for any positive integer $a$, 
\begin{equation}\label{eq85}
\mbox{vol}(aD)=a^d\mbox{vol}(D).
\end{equation} 
This identity is proven in Proposition 2.2.30 \cite{L}.
 
\begin{Theorem}(Lazarsfeld)\label{Volcont} Suppose that $X$ is a complete variety over a field $k$. Then the function ${\rm vol}_X$ on line bundles extends uniquely to a continuous  function on $N^1(X)$ which is homogeneous of degree 1.
\end{Theorem}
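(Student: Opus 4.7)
The plan is to follow the proof of Corollary 2.2.45 of \cite{L}, verifying that each ingredient remains valid over an arbitrary field $k$.

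First, I would extend $\mbox{vol}_X$ from $\mbox{Pic}(X)$ to $N^1(X)_\QQ$ by homogeneity. For $\xi \in N^1(X)_\QQ$, choose a positive integer $m$ such that $m\xi$ is the numerical class of a line bundle $\mathcal{L}$, and set $\mbox{vol}_X(\xi) := m^{-d}\,\mbox{vol}_X(\mathcal{L})$. Well-definedness follows from the fact that $\mbox{vol}_X$ descends to numerical classes (Proposition 2.2.41 of \cite{L}, whose proof extends verbatim to arbitrary $k$ once the volume is known to be a limit) together with the integer homogeneity (\ref{eq85}).

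Second, I would establish a local Lipschitz estimate on the rational big cone, mirroring Lemma 2.2.42 of \cite{L}: for every big $\xi \in N^1(X)_\QQ$ there exist a constant $C>0$ and a neighborhood $U$ of the origin in $N^1(X)_\QQ$ with
\[
\bigl|\mbox{vol}_X(\xi + \eta) - \mbox{vol}_X(\xi)\bigr| \le C\,\|\eta\| \quad \text{for all } \eta \in U.
\]
The argument combines three ingredients: (i) monotonicity of volume under addition of an effective class, which follows from the inclusion of global sections obtained by multiplying by a section of the added class; (ii) the ample-plus-effective decomposition (\ref{eq80}), $\xi = A + E$, which allows one to choose a rational $s>0$ with $sA\pm\eta$ both pseudoeffective, thereby sandwiching $\mbox{vol}_X(\xi\pm\eta)$ between $\mbox{vol}_X(\xi-sA)$ and $\mbox{vol}_X(\xi+sA)$; and (iii) polynomial-in-$t$ asymptotic control of $\mbox{vol}_X(\xi+tA)$ along the ample direction, which in turn requires that $\mbox{vol}_X$ be a genuine limit rather than merely a $\limsup$. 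The last ingredient over an arbitrary field is provided by Theorem 7.2 of \cite{C}.

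Third, since $N^1(X)_\QQ$ is dense in $N^1(X)$ with its Euclidean topology, the local Lipschitz estimate yields a unique continuous extension of $\mbox{vol}_X$ to the entire big cone in $N^1(X)$; setting $\mbox{vol}_X = 0$ outside $\mbox{Psef}(X)$ produces a continuous function on all of $N^1(X)$, and the required homogeneity passes from $N^1(X)_\QQ$ to $N^1(X)$ by continuity. The main obstacle is verifying the Lipschitz bound of Step 2 without appealing to characteristic-zero tools such as resolution of singularities or Fujita approximation; this reduces to monotonicity of volume along effective directions combined with polynomial control along ample directions, both of which survive over an arbitrary $k$ once the existence of $\mbox{vol}_X$ as a limit is in hand.
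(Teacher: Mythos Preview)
Your overall strategy aligns with the paper's---both extend Lazarsfeld's proof of Corollary 2.2.45 in \cite{L}---but you overlook two reductions the paper treats explicitly. First, the decomposition (\ref{eq80}) you invoke requires $X$ to be projective; the paper handles the merely complete case by passing to a Chow cover. Second, and more substantively, the Lipschitz bound in your Step~2 rests on Lazarsfeld's Lemma 2.2.37 and on the hyperplane-section estimates behind Corollary 2.2.45, both of which use Bertini-type arguments to select suitable (e.g.\ integral, or general) members of very ample linear systems. These arguments require the $k$-rational points of the parameterizing projective space to be Zariski dense, which fails when $k$ is finite. Your assertion that ``polynomial control along ample directions \ldots\ survives over an arbitrary $k$ once the existence of $\mbox{vol}_X$ as a limit is in hand'' does not address this: the control of $h^0(X,mD+A)-h^0(X,mD)$ comes from restricting to a well-chosen divisor in $|A|$, not from the limit statement for $\mbox{vol}_X$.

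The paper's remedy for finite $k$ is a base-change trick: pass to the rational function field $k'=k(t)$ and set $X'=X\times_k k'$. Then $X'$ is a variety over the infinite field $k'$, so the argument above yields continuity of $\mbox{vol}_{X'}$, and by flat base change of cohomology one has $\mbox{vol}_X=\mbox{vol}_{X'}\circ f^*$, where $f^*:N^1(X)\to N^1(X')$ is the continuous (injective) pullback. Continuity of $\mbox{vol}_X$ follows. You should either incorporate this step or supply a direct argument for the divisor-restriction bound that does not rely on Bertini over finite fields.
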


\begin{proof} This is proven in Corollary 2.2.45 \cite{L} when $X$ is projective and $k$ is algebraically closed of characteristic zero.
The proof extends without difficulty to the more general situation of this theorem, as we now indicate.

We first establish the theorem in the case when $X$ is projective over an infinite field $k$. The proof of Corollary 2.2.45 in \cite{L} extends to this case, after we make a couple of observations. First, Lemma 2.2.37 \cite{L} is valid when $k$ is an infinite field, since the $k$-rational points are then dense in a $\PP^n_k$ parameterizing a linear system of divisors. Second, by Theorem \ref{Bertini}, we can find the very general hyperplane sections required for the proof of Corollary 2.2.45 \cite{L}.

Now suppose that $X$ is projective over a finite field $k$. Let $k'=k(t)$ be a rational function field over $k$, and $X'=X\times_kk'$
with natural morphism $f:X'\rightarrow X$. $X'$ is a $k'$-variety. By the proof of Proposition \ref{Picard}, and by flat base change of cohomology
(Proposition III.9.3 \cite{H}), we have a commutative diagram
$$
\begin{array}{lll}
&&\RR\\
{\rm vol}_X&\nearrow&\uparrow {\rm vol}_{X'}\\
N^1(X)&\stackrel{f^*}{\rightarrow}&N^1(X').
\end{array}
$$
Thus $\mbox{vol}_X$ is continuous since $f^*$ and $\mbox{vol}_{X'}$ are.

The general case, when $X$ is complete over a field now follows from taking a Chow cover.
\end{proof}

\section{More vector spaces and cones associated to a variety}\label{SecMore}
In this section we suppose that $X$ is a complete $d$-dimensional variety over a  field $k$.

\subsection{Finite dimensional vector spaces and cones associated to a variety}
 For $0< p\le d$, we define
 $M^p(X)$ to be the direct product of $N^1(X)$ $p$ times, and we define $M^0(X)=\RR$.
 For $1< p\le d$,  we define
 $L^p(X)$ to be the vector space of $p$-multilinear forms from $M^p(X)$ to $\RR$, and define $L^0(X)=\RR$.

The intersection product gives us 
 $p$-multilinear maps
\begin{equation}\label{eq6}
M^p(X)\rightarrow L^{d-p}(X),
\end{equation}
 for $0\le p\le d$.
In the special case when $p=0$, the map is just the linear map taking $1$ to the map 
$$
(\mathcal L_1,\ldots,\mathcal L_d)\mapsto (\mathcal L_1\cdot\ldots\cdot \mathcal L_d)=(\mathcal L_1\cdot\ldots\cdot\mathcal L_d\cdot X).
$$
We will denote the image of $(\mathcal L_1,\ldots,\mathcal L_p)$ by $\mathcal L_1\cdot\ldots\cdot \mathcal L_p$.
We will sometimes write 
$$
\mathcal L_1\cdot\ldots\cdot\mathcal L_p(\beta_{p+1},\ldots,\beta_d)
=(\mathcal L_1\cdot\ldots\cdot\mathcal L_p\cdot\beta_{p+1}\cdot\ldots\cdot \beta_d).
$$

We give all of the above vector spaces the Euclidean topology, so that all of the mappings considered above are continuous.

 Let $|*|$ be a norm on $M^1(X)$ giving the Euclidean topology. The Euclidean topology on $L^p(X)$ is given by
the norm $||A||$, which is defined on a multilinear form $A\in L^p(X)$ to be the greatest lower bound of all real numbers $c$ such that
$$
|A(x_1,\ldots,x_p)|\le c|x_1|\cdots |x_p|
$$
for $x_1,\ldots,x_p\in M^1(X)$.

Suppose that $V$ is a closed $p$-dimensional  subvariety of $X$ with $1\le p\le d$. Define $\sigma_V\in L^p(X)$ by 
$$
\sigma_V(\mathcal L_1,\ldots,\mathcal L_p) = (\mathcal L_1\cdot\ldots\cdot \mathcal L_p\cdot V)
$$
for $\mathcal L_1,\ldots,\mathcal L_p\in \mbox{Pic}(X)$. The pseudoeffective cone $\mbox{Psef}(L^p(X))$ in $L^p(X)$ is the closure of the cone generated by the  $\sigma_V$ in $L^p(X)$.  
$\mbox{Psef}(L^0(X))$ is defined to be the nonnegative real numbers.

\begin{Lemma}\label{Lemma60}  Suppose that $X$ is a projective variety over a field $k$ and $1\le p\le d$.
\begin{enumerate}
\item[1)] Suppose that $\alpha\in \mbox{Psef}(L^p(X))$ and $\mathcal L_1,\ldots,\mathcal L_p\in M^1(X)$ are nef. Then
$$
\alpha(\mathcal L_1,\ldots,\mathcal L_p)\ge 0.
$$
\item[2)] $\mbox{Psef}(L^p(X))$ is a strict cone.
\end{enumerate}
\end{Lemma}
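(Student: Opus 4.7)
The two parts are naturally linked: (2) will follow easily from (1) together with the fact that, on a projective variety, ample classes span $N^1(X)$ as an $\RR$-vector space.

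For (1), I would fix nef classes $\mathcal{L}_1,\ldots,\mathcal{L}_p\in N^1(X)$ and consider the evaluation functional
$$
\phi_{\mathcal L}:L^p(X)\to\RR,\qquad \alpha\mapsto \alpha(\mathcal{L}_1,\ldots,\mathcal{L}_p),
$$
which is a continuous linear functional on a finite-dimensional Hausdorff vector space. Since $\mbox{Psef}(L^p(X))$ is by definition the closure of the cone spanned by the $\sigma_V$, it suffices to check that $\phi_{\mathcal L}(\sigma_V)=(\mathcal{L}_1\cdot\ldots\cdot\mathcal{L}_p\cdot V)\ge 0$ for every $p$-dimensional closed subvariety $V$ of $X$. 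By the projection formula applied to $V\hookrightarrow X$, this reduces to the statement that the top-dimensional intersection of nef $\RR$-classes on a projective variety is non-negative. For integral nef classes this is the classical theorem of Kleiman, whose proof is based on the Nakai--Moishezon criterion and approximation of nef divisors by ample ones; it carries over to an arbitrary ground field $k$ using the intersection theory recalled in Subsection \ref{SSInt}. The passage to $\RR$-coefficients is then routine, using multilinearity of the intersection pairing on $N^1(V)$, its continuity, and the fact that $\mbox{Nef}(V)$ is the closure of $\mbox{Amp}(V)$.

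For (2), strictness means $\mbox{Psef}(L^p(X))\cap(-\mbox{Psef}(L^p(X)))=\{0\}$. If both $\alpha$ and $-\alpha$ belong to $\mbox{Psef}(L^p(X))$, then applying (1) to each forces $\alpha(\mathcal{L}_1,\ldots,\mathcal{L}_p)=0$ for every tuple of nef classes, and in particular for every tuple of ample $\RR$-classes. Since $X$ is projective, for any $D\in N^1(X)$ and any ample class $H$, $D+nH$ is ample for $n\gg 0$, so $D=(D+nH)-nH$ writes $D$ as a difference of two ample classes. Inserting this decomposition into each of the $p$ slots and expanding by multilinearity expresses $\alpha(D_1,\ldots,D_p)$ as a finite $\RR$-linear combination of evaluations of $\alpha$ on ample tuples, each of which vanishes. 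Hence $\alpha\equiv 0$.

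The only genuinely non-formal ingredient is the underlying non-negativity $(\mathcal{L}_1\cdot\ldots\cdot\mathcal{L}_p\cdot V)\ge 0$ for nef $\mathcal{L}_i$ and a $p$-dimensional subvariety $V$; this is the step I would expect to be most worth checking carefully in the arbitrary-field setting, although the extension is by now well understood via the intersection theory reviewed earlier in the paper. Everything else is continuity and multilinear algebra.
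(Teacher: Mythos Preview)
Your proposal is correct and follows essentially the same approach as the paper: for (1) both arguments reduce to $(\mathcal L_1\cdot\ldots\cdot\mathcal L_p\cdot V)\ge 0$ for nef $\mathcal L_i$ (via $\mbox{Nef}=\overline{\mbox{Amp}}$) and then pass to the closure, with the only difference being that you invoke continuity of the evaluation functional abstractly while the paper writes out an explicit $\epsilon$-argument with the operator norm $\|\cdot\|$; for (2) both arguments note that $\alpha$ vanishes on ample tuples and then use that ample classes span $N^1(X)$ together with multilinearity, the paper phrasing this as ``$\mbox{Amp}(X)$ contains a basis'' and you as ``write each $D_i$ as a difference of ample classes.''
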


\begin{proof} First suppose that $\alpha=\sum a_i\sigma_{V_i}$
with $a_i\in \RR_{\ge 0}$ and $V_i$ closed $p$-dimension  subvarieties of $X$. Then if $\mathcal L_1,\ldots,\mathcal L_p$ are nef, we have that
$\alpha(\mathcal L_1,\ldots,\mathcal L_p)\ge 0$ (since the nef cone is the closure of the ample cone). Now suppose that $\alpha$ is an arbitrary element of $\mbox{Psef}(L^p(X))$.
Then $\alpha$ is the limit of a sequence $\alpha_j=\sum_ia_i^j\sigma_{V_i}$ with $a_i^j\in \RR_{\ge 0}$ and $V_i$  closed $p$-dimensional subvarieties of $X$. Suppose that $\mathcal L_1,\ldots,\mathcal L_p$ are nef. We will show that
$\alpha(\mathcal L_1,\ldots,\mathcal L_p)\ge 0$. Suppose otherwise; then $z:=\alpha(\mathcal L_1,\ldots,\mathcal L_p)<0$. 
Let $\epsilon=\frac{|z|}{|\mathcal L_1|\cdots|\mathcal L_p|}$. There exists $\alpha_j$ such that $||\alpha-\alpha_j||<\epsilon$
(where $||*||$ is the norm defined above). Thus 
$$
|\alpha(\mathcal L_1,\ldots,\mathcal L_p)-\alpha_j(\mathcal L_1,\ldots,\mathcal L_p)|<\epsilon|\mathcal L_1|\cdots|\mathcal L_p|=|z|,
$$
a contradiction since $\alpha_j(\mathcal L_1,\ldots,\mathcal L_p)\ge 0$.

In particular, if $\alpha\in \mbox{Psef}(L^p(X))\cap (-\mbox{Psef}(L^p(X)))$, then $\alpha$ vanishes on the open subset $\mbox{Amp}(X)^p$ 
of $M^p(X)$. Since $\mbox{Amp(X)}$ contains a basis of $M^1(X)$ and $\alpha$ is multilinear we have that $\alpha=0$.

\end{proof}

Since $\mbox{Psef}(L^p(X))$ is a strict cone, we  have by Section \ref{SecPO} an equivalence relation on $\mbox{Psef}(L^p(X))$, defined by 
$$
\alpha\ge 0\mbox{ if }\alpha\in \mbox{Psef}(L^p(X)).
$$

$L^0(X)=\RR$ and $\mbox{Psef}(L^0(X))$ is the set of nonnegative real numbers, so $\ge$ is the usual order on $\RR$.

\begin{Lemma}\label{Lemma7}
Suppose that $X$ is a projective variety over a field $k$ and $\beta\in \mbox{Psef}(L^p(X))$. Then  the set
$$
\{\alpha\in \mbox{Psef}(L^p(X))\mid 0\le \alpha\le\beta\}
$$
is compact. 
\end{Lemma}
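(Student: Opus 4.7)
The plan is to show the set is closed and bounded in the finite-dimensional Euclidean space $L^p(X)$, whence compact. The case $p=0$ is trivial since $L^0(X)=\RR$ and the set is the closed interval $[0,\beta]$, so assume $1\le p\le d$.

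\emph{Closedness.} By definition the set equals $\mathrm{Psef}(L^p(X))\cap\bigl(\beta-\mathrm{Psef}(L^p(X))\bigr)$. Both $\mathrm{Psef}(L^p(X))$ and $\beta-\mathrm{Psef}(L^p(X))$ are closed in $L^p(X)$ (the first by definition as a closure, the second as its translate by $\beta$), so the intersection is closed.

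\emph{Boundedness.} Since $X$ is projective, $\mathrm{Amp}(X)$ is a nonempty open cone in the finite dimensional space $N^1(X)=M^1(X)$, hence spans it. Thus we may choose a basis $e_1,\ldots,e_r$ of $N^1(X)$ consisting of ample classes. Since all norms on the finite dimensional space $L^p(X)$ are equivalent, it suffices to bound the norm
\[
\|\alpha\|' := \max_{1\le i_1,\ldots,i_p\le r} \bigl|\alpha(e_{i_1},\ldots,e_{i_p})\bigr|
\]
uniformly on our set. The $e_{i_j}$ are ample, hence in particular nef, so for any $\alpha$ with $0\le\alpha\le\beta$ Lemma \ref{Lemma60}(1) applied to $\alpha$ and to $\beta-\alpha$ gives
\[
0\le \alpha(e_{i_1},\ldots,e_{i_p})\le \beta(e_{i_1},\ldots,e_{i_p}).
\]
Hence $\|\alpha\|'\le \max_{i_1,\ldots,i_p}\beta(e_{i_1},\ldots,e_{i_p})$, a constant depending only on $\beta$ and on the chosen basis. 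This bounds $\alpha$ in the Euclidean norm as well.

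The only nontrivial ingredient is the existence of a basis of $N^1(X)$ consisting of ample classes, which is immediate from the projectivity of $X$ and the fact that $\mathrm{Amp}(X)$ is open in the finite dimensional space $N^1(X)$; everything else is formal manipulation of the partial order and Lemma \ref{Lemma60}(1). Closed and bounded in the finite dimensional Euclidean space $L^p(X)$ yields compactness.
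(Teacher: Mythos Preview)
Your proof is correct. The closedness step is identical to the paper's, and both arguments then show boundedness in the finite-dimensional space $L^p(X)$.

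The boundedness arguments differ slightly in flavor. The paper invokes strictness of $\mathrm{Psef}(L^p(X))$ (Lemma~\ref{Lemma60}(2)) abstractly: a strict closed convex cone in a finite-dimensional space admits enough hyperplanes through the origin meeting the cone only at $0$, and these together with their translates by $\beta$ trap the set in a bounded region. You instead work one level down, using Lemma~\ref{Lemma60}(1) directly: evaluation at tuples of ample classes gives linear functionals on $L^p(X)$ that are nonnegative on $\mathrm{Psef}(L^p(X))$, and since an ample basis of $N^1(X)$ exists, the corresponding tuple-evaluations form coordinates on $L^p(X)$, so the inequalities $0\le\alpha(e_{i_1},\ldots,e_{i_p})\le\beta(e_{i_1},\ldots,e_{i_p})$ bound $\alpha$ explicitly. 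Your route is more concrete and makes transparent exactly which linear functionals do the bounding; the paper's route is more abstract but shorter, relying on the general fact about pointed cones. Since the proof of Lemma~\ref{Lemma60}(2) already rests on Lemma~\ref{Lemma60}(1), the two arguments are really unpacking the same content.
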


\begin{proof}
 This set is equal to the closed set
$$
\mbox{Psef}(L^p(X))\cap (\beta-\mbox{Psef}(L^p(X))).
$$
Since $\mbox{Psef}(L^p(X))$ is strict, there exists an open set of  linear hyperplanes which intersect $\mbox{Psef}(L^p(X))$ only at the origin.
The choice of a linearly independent set of such hyperplanes and their translations by $\beta$ realizes a bounded set containing $\{\alpha\in \mbox{Psef}(L^p(X))\mid 0\le \alpha\le\beta\}$.
\end{proof}

Suppose that $Y$ is a complete variety over $k$ and $f:Y\rightarrow X$ is a birational morphism. Then  the natural homomorphism $f^*:\mbox{Pic}(X)\rightarrow \mbox{Pic}(Y)$
induces continuous linear maps
$f^*:N^1(X)\rightarrow N^1(Y)$ and $f_*:L^p(Y)\rightarrow L^p(X)$. By Proposition I.2.6 \cite{K}, for $1\le t\le d$, we have that
\begin{equation}\label{eq50}
f^*(\mathcal L_1)\cdot\ldots\cdot f^*(\mathcal L_t)=\mathcal L_1\cdot \ldots \cdot \mathcal L_t
\end{equation}
for $\mathcal L_1,\ldots,\mathcal L_t\in \mbox{Pic}(X)$. Thus for $0\le p\le d$ we have commutative diagrams of linear maps
\begin{equation}\label{eq1}
\begin{array}{rcl}
M^p(Y)&\rightarrow &L^{d-p}(Y) \\
f^*\uparrow&&f_*\downarrow \\
M^p(X)&\rightarrow &L^{d-p}(X),
\end{array}
\end{equation}
where the horizontal maps are those of (\ref{eq6}).

For $\alpha\in N^1(X)$, we have that
\begin{equation}\label{eq91}
f^*(\alpha)\in \mbox{Nef}(Y)\mbox{ if and only if }\alpha\in \mbox{Nef}(X).
\end{equation}
\begin{equation}\label{eq92}
f^*(\alpha)\in\mbox{Big}(Y)\mbox{ if and only if }\alpha\in \mbox{Big}(X).
\end{equation}
\begin{equation}\label{eq93}
f^*(\alpha)\in\mbox{Psef}(Y)\mbox{ if }\alpha\in \mbox{Psef}(X).
\end{equation}

\begin{Lemma}\label{Lemma1} Suppose that $Y$ is a complete variety over $k$ and $f:Y\rightarrow X$ is a birational morphism. Then $f_*(\mbox{Psef}(L^p(Y)))\subset \mbox{Psef}(L^p(X))$.
\end{Lemma}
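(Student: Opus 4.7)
The plan is to exploit that $f_*:L^p(Y)\to L^p(X)$ is linear and continuous and that $\mbox{Psef}(L^p(X))$ is closed (by definition a closure), so by passing to limits it suffices to verify the containment on a generating set. Concretely, $\mbox{Psef}(L^p(Y))$ is the closure of the convex cone generated by the classes $\sigma_W$ as $W$ ranges over the $p$-dimensional closed subvarieties of $Y$, so I will reduce to showing $f_*(\sigma_W)\in\mbox{Psef}(L^p(X))$ for every such $W$.

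Fix such a $W$. Since $f$ is proper, $V:=f(W)$ is a closed subvariety of $X$ with $\dim V\le p$. From the commutative diagram (\ref{eq1}) together with the defining formula for $\sigma_W$,
$$
f_*(\sigma_W)(\mathcal L_1,\ldots,\mathcal L_p)=\sigma_W(f^*\mathcal L_1,\ldots,f^*\mathcal L_p)=(f^*\mathcal L_1\cdot\ldots\cdot f^*\mathcal L_p\cdot W)
$$
for $\mathcal L_1,\ldots,\mathcal L_p\in\mbox{Pic}(X)$. Now I apply the projection formula of Proposition I.2.6 in \cite{K} to the proper morphism $f|_W:W\to V$. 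If $\dim V=p$ the right-hand side equals $[k(W):k(V)](\mathcal L_1\cdot\ldots\cdot\mathcal L_p\cdot V)$, so $f_*(\sigma_W)=[k(W):k(V)]\,\sigma_V$. If $\dim V<p$, the projection formula expresses the right-hand side as an alternating sum of intersection numbers of $p$ invertible sheaves against the coherent sheaves $R^i(f|_W)_*\mathcal O_W$ on $V$, whose supports have dimension $<p$, and each such number vanishes because the Snapper Euler-characteristic polynomial then has total degree $<p$ in $(n_1,\ldots,n_p)$. In either case $f_*(\sigma_W)\in\mbox{Psef}(L^p(X))$.

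The only step demanding care is the vanishing assertion when $\dim V<p$, since the generically finite case is just (\ref{eq50}) applied to $f|_W$. This vanishing, however, is immediate from Snapper's definition of the intersection product recalled at the beginning of Subsection \ref{SSInt}, and so introduces no new technical content beyond what Kleiman's framework already provides. The degenerate case $p=0$ is trivial, as $\mbox{Psef}(L^0(Y))=\mbox{Psef}(L^0(X))=\RR_{\ge 0}$ and $f_*$ is the identity. Once each generator $\sigma_W$ is sent into $\mbox{Psef}(L^p(X))$, nonnegative linear combinations and limits remain in the closed cone $\mbox{Psef}(L^p(X))$, yielding the desired inclusion.
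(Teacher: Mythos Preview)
Your proof is correct and follows essentially the same approach as the paper: reduce to the generators $\sigma_W$, apply Kleiman's projection formula (Proposition I.2.6 of \cite{K}) to get $f_*(\sigma_W)=[k(W):k(V)]\,\sigma_V$ when $\dim V=p$ and $f_*(\sigma_W)=0$ otherwise, and then conclude by linearity, continuity of $f_*$, and closedness of $\mbox{Psef}(L^p(X))$. Your added explanation of the vanishing via the degree of the Snapper polynomial is a welcome bit of detail the paper omits, but the argument is otherwise the same.
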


\begin{proof} Suppose that $V$ is a $p$-dimensional closed subvariety of $Y$. Let $W=f(V)$. Then $f_*(\sigma_V)=\deg(f|V)\sigma_W$ if $\dim W=p$ and is zero otherwise, by Proposition I.2.6 \cite{K}. Since $f_*$ is continuous and $\mbox{Psef}(L^p(X))$ is closed, we have that
$$
f_*(\mbox{Psef}(L^p(Y))\subset \mbox{Psef}(L^p(X)).
$$
\end{proof}

\subsection{Infinite dimensional topological spaces associated to  a variety}

Let $I(X)=\{Y_i\}$ be the set of projective varieties whose function field is $k(X)$ and such that the birational map $Y_i\dasharrow X$
is a morphism. This makes $I(X)$ a directed set. 
$\{M^p(Y_i)\}$ is a directed system of real vector spaces, where we have a linear mapping 
$f_{ij}^*:M^p(Y_i)\rightarrow M^p(Y_j)$ if the birational map $f_{ij}:Y_j\rightarrow Y_i$ is a morphism. 
We define 
$$
M^p(\mathcal X)=\lim_{\rightarrow}M^p(Y_i)
$$
with the strong topology  (the direct limit topology, c.f. Appendix 1. Section 1 \cite{D}). $M^p(\mathcal X)$ is a real vector space. As a vector space, $M^p(\mathcal X)$ is isomorphic to the $p$-fold product $M^1(\mathcal X)^p$.

We define $\alpha\in M^1(\mathcal X)$ to be $\QQ$-Cartier (respectively nef, big, effective, pseudoeffective) if there exists a representative of $\alpha$ in $M^1(Y)$
which has this  property for some $Y\in I(X)$.  We define subsets $\mbox{Nef}^p(\mathcal X)$, $\mbox{Big}^p(\mathcal X)$ and 
$\mbox{Psef}^p(\mathcal X)$ to be the respective subsets of $M^p(\mathcal X)$ of nef, big and pseudoeffective divisors.  There are all convex cones in the vector space $M^p(\mathcal X)$.

By (\ref{eq91}), (\ref{eq92}) and (\ref{eq93}), $\{\mbox{Nef}(Y)^p\}$, $\{\mbox{Big}(Y)^p\}$ and $\{\mbox{Psef}(Y)^p\}$
also form directed systems. As sets, we have that
$$
 \mbox{Nef}^p(\mathcal X)=\lim_{\rightarrow}(\mbox{Nef}(Y)^p),\,\,
\mbox{Big}^p(\mathcal X)=\lim_{\rightarrow}(\mbox{Big}(Y)^p),\,\,
\mbox{Psef}^p(\mathcal X)=\lim_{\rightarrow}(\mbox{Psef}(Y)^p).
$$
We give all of these sets  their respective strong topologies. 

Let  $\rho_Y:M^p(Y)\rightarrow M^p(\mathcal X)$ be the induced continuous linear maps
for $Y\in I(X)$. We will also denote the induced continuous maps 
$\mbox{Nef}(Y)^p\rightarrow \mbox{Nef}^p(\mathcal X)$, $\mbox{Big}(Y)^p\rightarrow\mbox{Big}^p(\mathcal X)$
and $\mbox{Psef}(Y)^p\rightarrow\mbox{Psef}^p(\mathcal X)$ by $\rho_Y$.

$\{L^p(Y_i)\}$ is an inverse  system of topological vector spaces, where we have a linear map $(f_{ij})_*:L^p(Y_j)\rightarrow L^p(Y_i)$ if the birational map $f_{ij}:Y_j\rightarrow Y_i$ is a morphism. 
We  define  
$$
L^p(\mathcal X)=\lim_{\leftarrow}L^p(Y_i),
$$
with the weak topology (the inverse limit topology).

In general, good topological properties on a directed system do not extend to the direct limit (c.f. Section 1 of Appendix 2 \cite{D}, especially the remark before 1.8). However, good topological properties on an inverse system do extend (c.f. Section 2 of Appendix 2 \cite{D}). In particular, we have the following proposition.

\begin{Proposition}
 $L^p(\mathcal X)$ is a  Hausdorff real topological vector space.
$L^p(\mathcal X)$ is a real vector space which is isomorphic (as a vector space) to the $p$-multilinear forms on $M^1(\mathcal X)$.
\end{Proposition}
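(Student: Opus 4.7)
I would treat the topological claim and the algebraic identification with $p$-multilinear forms separately, since the topology on $L^p(\mathcal X)$ plays no role in the latter.

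\textbf{Topology.} I would realize $L^p(\mathcal X)$ concretely as the subspace of compatible families inside the product $\prod_{Y\in I(X)} L^p(Y)$, i.e.\ the set of $\{A_Y\}$ with $(f_{ij})_* A_{Y_j} = A_{Y_i}$ whenever $f_{ij}\colon Y_j\to Y_i$ is a morphism in $I(X)$. Each factor $L^p(Y)$ is a finite-dimensional Euclidean real topological vector space and is therefore Hausdorff; the product of Hausdorff topological vector spaces is itself a Hausdorff topological vector space in the product topology. Continuity and linearity of every $(f_{ij})_*$ mean that the compatibility conditions cut out a closed linear subspace, which by definition carries the inverse-limit topology. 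Hence $L^p(\mathcal X)$ is a Hausdorff real topological vector space.

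\textbf{Identification with multilinear forms.} Next I would construct the natural $\RR$-linear map
$$
\Phi\colon L^p(\mathcal X)\longrightarrow \mathrm{Mult}^p\bigl(M^1(\mathcal X),\RR\bigr).
$$
Given $\{A_Y\}\in L^p(\mathcal X)$ and a tuple $(\alpha_1,\dots,\alpha_p)\in M^1(\mathcal X)^p$, use directedness of $I(X)$ to choose a common $Y\in I(X)$ and representatives $\tilde\alpha_i\in N^1(Y)$ with $\rho_Y(\tilde\alpha_i)=\alpha_i$, and set
$$
\Phi(\{A_Y\})(\alpha_1,\dots,\alpha_p)\;:=\;A_Y(\tilde\alpha_1,\dots,\tilde\alpha_p).
$$
To check this is well defined, I would verify that the pushforward $g_*\colon L^p(Y'')\to L^p(Y)$ along a birational morphism $g\colon Y''\to Y$ is given by $(g_* B)(\beta_1,\dots,\beta_p)=B(g^*\beta_1,\dots,g^*\beta_p)$, which is forced by (\ref{eq50}) and the commutative diagram (\ref{eq1}). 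The compatibility condition then yields $A_{Y''}(g^*\tilde\alpha_1,\dots,g^*\tilde\alpha_p)=A_Y(\tilde\alpha_1,\dots,\tilde\alpha_p)$, and independence of the chosen $Y$ and of the lifts $\tilde\alpha_i$ reduces to the standard directed-colimit fact that any two such representatives become equal after pulling back to a common dominator in $I(X)$.

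\textbf{Bijectivity.} Linearity of $\Phi$ is immediate from the definition. For injectivity, choosing $Y$ itself as the dominating variety in the definition gives $A_Y(\tilde\alpha_1,\dots,\tilde\alpha_p) = \Phi(\{A_Y\})(\rho_Y\tilde\alpha_1,\dots,\rho_Y\tilde\alpha_p)$, so $\Phi(\{A_Y\})=0$ forces every $A_Y$ to vanish on $N^1(Y)^p$. For surjectivity, given $A\in\mathrm{Mult}^p(M^1(\mathcal X),\RR)$, set $A_Y:=A\circ(\rho_Y\times\cdots\times\rho_Y)$; each $A_Y$ is plainly $p$-multilinear, and compatibility $g_*A_{Y''}=A_Y$ follows from $\rho_{Y''}\circ g^*=\rho_Y$, which is in turn immediate from the universal property of the direct limit defining $M^1(\mathcal X)$.

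\textbf{Main obstacle.} The only subtle point is the well-definedness of $\Phi$: ensuring that different choices of dominating $Y$ and of representatives $\tilde\alpha_i$ produce the same value. This is handled by combining the explicit formula $(g_*B)(\beta)=B(g^*\beta)$ with the directed-colimit matching argument. If needed, one can also observe that each $\rho_Y\colon N^1(Y)\to M^1(\mathcal X)$ is injective, since for any birational morphism $g\colon Y''\to Y$ of projective varieties the pullback $g^*\colon N^1(Y)\to N^1(Y'')$ is injective: given a curve $C\subset Y$, an irreducible component $Z$ of $g^{-1}(C)$ surjects onto $C$, and cutting $Z$ by general hyperplane sections produces a curve $W\subset Y''$ with $g(W)=C$, so the projection formula gives $(g^*D\cdot W)=\deg(g|_W)(D\cdot C)$, forcing $(D\cdot C)=0$ whenever $g^*D\equiv 0$.
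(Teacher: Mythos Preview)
Your proof is correct. The paper itself does not give a proof of this proposition: it simply remarks that good topological properties pass to inverse limits, citing Section~2 of Appendix~2 of Dugundji's \emph{Topology}, and states the vector-space identification without argument. Your argument supplies exactly the details that the citation stands in for---realizing the inverse limit as a closed linear subspace of the product and verifying the multilinear-form identification by hand---so there is nothing substantive to compare. The injectivity of $\rho_Y$ that you discuss at the end is, as you note, not needed for the argument, but it does no harm.
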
 

Let $\pi_Y:L^p(\mathcal X)\rightarrow M^p(Y)$  be the induced continuous linear maps
for $Y\in I(X)$.

We will make repeated use of the following, which follow from the universal properties of the inverse limit and the direct limit
(c.f. Theorems 2.5 and 1.5 \cite{D}).

\begin{Lemma} Suppose that $\mathcal F$ is one of $M^p$, $\mbox{Nef}^p$, $\mbox{Big}^p$ or $\mbox{Psef}^p$.
Then  giving a continuous mapping
$$
\Phi:\mathcal F(\mathcal X)\rightarrow L^{d-p}(\mathcal X)
$$
is equivalent to giving continuous maps $\phi_Y:\mathcal F(Y)\rightarrow L^{d-p}(Y)$ for all $Y\in I(X)$, such that the diagram
$$
\begin{array}{lll}
\mathcal F(Z)&\stackrel{\phi_Z}{\rightarrow}&L^{d-p}(Z)\\
f^*\uparrow&&\downarrow f_*\\
\mathcal F(Y)&\stackrel{\phi_Y}{\rightarrow}&L^{d-p}(Y)
\end{array}
$$
commutes, whenever $f:Z\rightarrow Y$ is in $I(Y)$.
\end{Lemma}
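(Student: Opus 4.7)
The plan is to reduce the lemma to the universal properties of the direct limit (for the source $\mathcal F(\mathcal X)$) and the inverse limit (for the target $L^{d-p}(\mathcal X)$) in the category of topological spaces, namely Theorems 1.5 and 2.5 of \cite{D}, as the excerpt itself suggests.

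For the forward direction, given a continuous $\Phi:\mathcal F(\mathcal X)\to L^{d-p}(\mathcal X)$, I would define $\phi_Y=\pi_Y\circ\Phi\circ\rho_Y$, which is continuous as a composition of continuous maps. The structural identities $\rho_Z\circ f^*=\rho_Y$ (compatibility of the direct-limit maps $\rho_Y$) and $f_*\circ\pi_Z=\pi_Y$ (compatibility of the inverse-limit projections $\pi_Y$), valid for any $f:Z\to Y$ in $I(X)$, then yield
$$
f_*\circ\phi_Z\circ f^*=(f_*\circ\pi_Z)\circ\Phi\circ(\rho_Z\circ f^*)=\pi_Y\circ\Phi\circ\rho_Y=\phi_Y,
$$
so the required square commutes.

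For the reverse direction, given a compatible family $\{\phi_Y\}$, I would build $\Phi$ in two stages. First, for each $Y\in I(X)$ I produce a continuous map $\tilde\phi_Y:\mathcal F(Y)\to L^{d-p}(\mathcal X)$ by invoking the inverse-limit universal property: it suffices to specify, for each $Z\in I(X)$, a continuous $\phi_{Y,Z}:\mathcal F(Y)\to L^{d-p}(Z)$ compatible with pushforwards in $Z$. Since $I(X)$ is directed, for any such $Z$ I pick some $W\in I(X)$ dominating both $Y$ and $Z$ via morphisms $h:W\to Y$ and $g:W\to Z$, and set $\phi_{Y,Z}=g_*\circ\phi_W\circ h^*$. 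Using the hypothesis $k_*\circ\phi_{W''}\circ k^*=\phi_W$ for any $k:W''\to W$ in $I(X)$, one checks that $\phi_{Y,Z}$ is independent of the choice of $W$, is compatible in $Z$, and that the resulting family $\{\tilde\phi_Y\}$ satisfies $\tilde\phi_Y=\tilde\phi_Z\circ f^*$ whenever $f:Z\to Y$ is in $I(X)$. The direct-limit universal property then assembles these $\tilde\phi_Y$ into the required continuous $\Phi:\mathcal F(\mathcal X)\to L^{d-p}(\mathcal X)$, and by construction $\pi_Y\circ\Phi\circ\rho_Y=\phi_{Y,Y}=\phi_Y$, establishing that the two constructions are inverse to each other.

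The only nontrivial bookkeeping is the well-definedness of $\phi_{Y,Z}$: given two candidates $W$ and $W'$, I would select $W''\in I(X)$ dominating both and apply the commuting-square hypothesis to rewrite both $g_*\circ\phi_W\circ h^*$ and $g'_*\circ\phi_{W'}\circ(h')^*$ as the common expression $(W''\to Z)_*\circ\phi_{W''}\circ(W''\to Y)^*$, the two competing paths $W''\to W\to Y$ and $W''\to W'\to Y$ coinciding because birational morphisms between separated varieties are unique. A small technical point worth noting is that for $\mathcal F\in\{\mbox{Nef}^p,\mbox{Big}^p,\mbox{Psef}^p\}$, the topology on $\mathcal F(\mathcal X)$ is by definition the direct-limit topology of the corresponding cones, so the direct-limit universal property applies verbatim and no separate argument for the cone cases is required.
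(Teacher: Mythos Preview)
Your proposal is correct and follows exactly the route the paper indicates: the paper offers no proof beyond the remark that the lemma ``follow[s] from the universal properties of the inverse limit and the direct limit (c.f.\ Theorems 2.5 and 1.5 \cite{D}),'' and you have simply unwound those universal properties in detail. Your bookkeeping for the well-definedness of $\phi_{Y,Z}$ (choosing a common refinement $W''$ and using uniqueness of birational morphisms in $I(X)$) is the right way to make the citation honest.
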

In the case when $\mathcal F=M^p$, if the $\phi_Y$ are all multilinear, then $\Phi$ is also multilinear (via the vector space isomorphism
of $M^p(\mathcal X)$ with $p$-fold product $M^1(\mathcal X)^p$).

As an application, we have the following useful property.

\begin{Lemma}\label{Intcont} The  intersection product gives us a continuous 
map
$$
\mathcal F(\mathcal X)\rightarrow L^{d-p}(\mathcal X)
$$
whenever $\mathcal F$ is one of $M^p$, $\mbox{Nef}^p$, $\mbox{Big}^p$ or $\mbox{Psef}^p$.
The map is multilinear on $M^p(\mathcal X)$.
\end{Lemma}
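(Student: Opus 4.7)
The plan is to apply the preceding lemma (the universal-property criterion for continuous maps out of $\mathcal F(\mathcal X)$) to each of the four choices of $\mathcal F$. For every $Y\in I(X)$, take $\phi_Y:\mathcal F(Y)\to L^{d-p}(Y)$ to be the intersection product map (\ref{eq6}), restricted to the appropriate cone when $\mathcal F$ is $\mbox{Nef}^p$, $\mbox{Big}^p$, or $\mbox{Psef}^p$. Since $M^p(Y)$ and $L^{d-p}(Y)$ are finite-dimensional real vector spaces with the Euclidean topology and the intersection product is $p$-multilinear in its arguments, each $\phi_Y$ is continuous; restriction to any subspace in its induced topology remains continuous.

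The heart of the argument is the verification of the compatibility square
$$
\begin{array}{lll}
\mathcal F(Z)&\stackrel{\phi_Z}{\rightarrow}&L^{d-p}(Z)\\
f^*\uparrow&&\downarrow f_*\\
\mathcal F(Y)&\stackrel{\phi_Y}{\rightarrow}&L^{d-p}(Y)
\end{array}
$$
for every birational morphism $f:Z\to Y$ in $I(Y)$. First, $f^*$ sends each cone into the corresponding cone on $Z$ by (\ref{eq91})--(\ref{eq93}), so the left vertical arrow is well defined in all four cases. Given $(\alpha_1,\ldots,\alpha_p)\in\mathcal F(Y)$ and test classes $(\beta_{p+1},\ldots,\beta_d)\in N^1(Y)^{d-p}$, unwinding the definition of $f_*$ on $L^{d-p}$ gives
$$
\bigl(f_*\phi_Z(f^*\alpha_1,\ldots,f^*\alpha_p)\bigr)(\beta_{p+1},\ldots,\beta_d)=\bigl(f^*\alpha_1\cdots f^*\alpha_p\cdot f^*\beta_{p+1}\cdots f^*\beta_d\bigr),
$$
and by the projection formula (\ref{eq50}) this coincides with $(\alpha_1\cdots\alpha_p\cdot\beta_{p+1}\cdots\beta_d)=\phi_Y(\alpha_1,\ldots,\alpha_p)(\beta_{p+1},\ldots,\beta_d)$. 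This is exactly the commutativity contained in diagram (\ref{eq1}).

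The preceding lemma now produces the desired continuous map $\mathcal F(\mathcal X)\to L^{d-p}(\mathcal X)$ in each of the four cases. For $\mathcal F=M^p$, multilinearity of each $\phi_Y$ is inherited from the intersection product on $Y$, and the parenthetical remark following the preceding lemma transports this property to the direct limit, under the identification of $M^p(\mathcal X)$ with $M^1(\mathcal X)^p$. No genuine obstacle is anticipated: the statement is a formal consequence of the universal properties of the direct and inverse limits combined with (\ref{eq50}) and (\ref{eq91})--(\ref{eq93}).
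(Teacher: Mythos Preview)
Your proposal is correct and follows exactly the approach the paper intends: the paper states Lemma~\ref{Intcont} simply ``as an application'' of the preceding universal-property lemma, and you have supplied the details by invoking the continuity and multilinearity of the intersection product on each $Y$, the compatibility diagram (\ref{eq1}) (which rests on (\ref{eq50})), and (\ref{eq91})--(\ref{eq93}) for the cones. There is nothing to add.
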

We will denote the image of $(\alpha_1,\ldots,\alpha_p)$ by $\alpha_1\cdot\ldots\cdot\alpha_p$. 
For $\beta_{p+1},\ldots,\beta_d\in M^1(\mathcal X)$, we will often write
$$
\alpha_1\cdot\ldots\cdot\alpha_p(\beta_{p+1},\ldots,\beta_d)=(\alpha_1\cdot\ldots\cdot\alpha_p\cdot\beta_{p+1}\cdot\ldots\cdot\beta_d).
$$

\subsection{Pseudoeffective classes in $L^p(\mathcal X)$}

We define a class $\alpha\in L^p(\mathcal X)$ to be pseudoeffective if $\pi_Y(\alpha)\in L^p(Y)$ is pseudoeffective for all $Y\in I(X)$.

\begin{Lemma}\label{Lemma3}
The set of pseudoeffective classes $\mbox{Psef}(L^p(\mathcal X))$ in $L^p(\mathcal X)$
is a  strict closed convex cone in $L^p(\mathcal X)$. 
\end{Lemma}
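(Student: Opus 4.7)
The plan is to express $\mathrm{Psef}(L^p(\mathcal X))$ as an intersection of preimages under the structural projections and then transfer each of the three required properties (closed, convex cone, strict) from the finite-dimensional setting one step at a time. Concretely, by definition of pseudoeffectiveness in $L^p(\mathcal X)$,
$$
\mathrm{Psef}(L^p(\mathcal X))=\bigcap_{Y\in I(X)}\pi_Y^{-1}\bigl(\mathrm{Psef}(L^p(Y))\bigr),
$$
and each $\pi_Y:L^p(\mathcal X)\to L^p(Y)$ is continuous and linear by construction of the inverse-limit topology and vector-space structure.

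First I would handle the easy topological and algebraic parts. For closedness: each $\mathrm{Psef}(L^p(Y))$ is closed in $L^p(Y)$ because it is defined as the closure of a cone; hence each $\pi_Y^{-1}(\mathrm{Psef}(L^p(Y)))$ is closed in $L^p(\mathcal X)$ by continuity of $\pi_Y$, and an arbitrary intersection of closed sets is closed. For the convex-cone property: each $\mathrm{Psef}(L^p(Y))$ is a convex cone (as the closure of a cone generated by non-negative combinations of the classes $\sigma_V$), each $\pi_Y$ is linear, so each preimage is a convex cone, and an intersection of convex cones is again a convex cone. The case $p=0$ is trivial, since $L^0(\mathcal X)=\RR$ and $\mathrm{Psef}(L^0(\mathcal X))=\RR_{\ge 0}$.

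The only point requiring actual input from the earlier theory is strictness. Suppose $\alpha\in\mathrm{Psef}(L^p(\mathcal X))\cap(-\mathrm{Psef}(L^p(\mathcal X)))$. Then for every $Y\in I(X)$ we have $\pi_Y(\alpha)\in\mathrm{Psef}(L^p(Y))\cap(-\mathrm{Psef}(L^p(Y)))$. Since every $Y\in I(X)$ is projective by definition of $I(X)$, Lemma \ref{Lemma60}(2) tells us this intersection is $\{0\}$, so $\pi_Y(\alpha)=0$ for all $Y$. By the universal property of the inverse limit (equivalently, the Hausdorff property of $L^p(\mathcal X)$ recorded in the proposition above), the family $\{\pi_Y\}_{Y\in I(X)}$ separates points, and hence $\alpha=0$.

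The only step that deserves the label \textbf{main obstacle} is the strictness verification, and even there the obstacle is only bookkeeping: one must remember that $I(X)$ consists exclusively of projective varieties so that Lemma \ref{Lemma60}(2) applies to each $L^p(Y)$, and one must invoke the universal property of the inverse limit to pass from vanishing at every finite stage to vanishing in $L^p(\mathcal X)$. The other two properties are formal consequences of continuity and linearity of the $\pi_Y$.
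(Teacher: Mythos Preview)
Your proof is correct and follows essentially the same approach as the paper: both write $\mathrm{Psef}(L^p(\mathcal X))=\bigcap_{Y\in I(X)}\pi_Y^{-1}(\mathrm{Psef}(L^p(Y)))$ to get closedness and convexity immediately, and both deduce strictness by applying Lemma~\ref{Lemma60} at each finite level and then using that the projections $\pi_Y$ separate points. Your write-up is a bit more explicit about the bookkeeping (projectivity of the $Y$, linearity and continuity of $\pi_Y$), but there is no substantive difference.
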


\begin{proof} The fact that $\mbox{Psef}(L^p(\mathcal X))$ is a closed convex cone follows from the fact that
$$
\mbox{Psef}(L^p(\mathcal X))=\cap_{Y\in I(X)}\pi_Y^{-1}(\mbox{Psef}(L^p(Y)))
$$
 is an intersection of closed convex cones.
 To verify strictness, we must check that if $\alpha$ and $-\alpha\in \mbox{Psef}(L^p(\mathcal X))$, then $\alpha=0$.
 Suppose this is not the case. Then there exists a nonzero $\alpha$ such that $\alpha,-\alpha\in \mbox{Psef}(L^p(\mathcal X))$.
 Then $\pi_Y(\alpha)$, $-\pi_Y(\alpha)\in \mbox{Psef}(L^p(Y))$ for all $Y\in I(X)$ so that $\pi_Y(\alpha)=0$ for all $Y$ by Lemma (\ref{Lemma60}), and thus $\alpha=0$.
  \end{proof}
 
 By  Lemma \ref{Lemma3} (c.f. Section \ref{SecPO}), we can define a partial order $\ge 0$ on $L^p(\mathcal X)$ by $\alpha\ge 0$ if $\alpha\in \mbox{Psef}(L^p(\mathcal X))$.

$L^0(\mathcal X)=\RR$ and $\mbox{Psef}(L^0(\mathcal X))$ is the set of nonnegative real numbers (by the remark before Lemma \ref{Lemma7}), so $\ge$ is the usual order on $\RR$.

\begin{Lemma}\label{Lemma61}
Suppose that $\mathcal L_1,\ldots,\mathcal L_p\in \mbox{Nef}(\mathcal X)$ and $\alpha\in \mbox{Psef}(L^p(\mathcal X))$. Then
$$
\alpha(\mathcal L_1,\ldots,\mathcal L_p)\ge 0.
$$
\end{Lemma}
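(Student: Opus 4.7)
The plan is to reduce everything to a single projective model $Y \in I(X)$ on which both $\alpha$ and the nef classes $\mathcal L_1,\ldots,\mathcal L_p$ have well-behaved representatives, and then invoke Lemma~\ref{Lemma60}(1).

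First, by definition of $\mbox{Nef}(\mathcal X)$, for each $i$ there exists some $Y_i \in I(X)$ carrying a nef representative $\mathcal L_i^{(i)} \in \mbox{Nef}(Y_i) \subset N^1(Y_i)$ of $\mathcal L_i$. Since $I(X)$ is a directed set (any two projective models are dominated by a third, e.g.\ via the normalization of the closure of the diagonal of $Y_i \times Y_j$), I can choose a single $Y \in I(X)$ and birational morphisms $f_i : Y \to Y_i$ for every $i$. Setting $\widetilde{\mathcal L}_i := f_i^*(\mathcal L_i^{(i)}) \in N^1(Y)$, the equivalence (\ref{eq91}) guarantees that each $\widetilde{\mathcal L}_i$ is nef on $Y$, and $\widetilde{\mathcal L}_i$ is a representative of $\mathcal L_i$ in the direct limit $M^1(\mathcal X)$.

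Next, since $\alpha \in \mbox{Psef}(L^p(\mathcal X))$, the definition gives $\pi_Y(\alpha) \in \mbox{Psef}(L^p(Y))$. The identification of $L^p(\mathcal X) = \lim_{\leftarrow} L^p(Y)$ with $p$-multilinear forms on $M^1(\mathcal X)$, together with the compatibility conditions built into the inverse/direct system (concretely, $(f_i)_*$ and $f_i^*$ are adjoint with respect to evaluation, cf.\ (\ref{eq50}) and diagram (\ref{eq1})), implies the identity
$$
\alpha(\mathcal L_1,\ldots,\mathcal L_p) \;=\; \pi_Y(\alpha)\bigl(\widetilde{\mathcal L}_1,\ldots,\widetilde{\mathcal L}_p\bigr).
$$
Applying Lemma~\ref{Lemma60}(1) to the projective variety $Y$, the pseudoeffective class $\pi_Y(\alpha) \in \mbox{Psef}(L^p(Y))$, and the nef classes $\widetilde{\mathcal L}_1,\ldots,\widetilde{\mathcal L}_p$ yields the desired inequality.

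The only point requiring care is the compatibility identity in the last display, i.e.\ checking that evaluating the inverse-limit form $\alpha$ on direct-limit classes really reduces to a single dominating $Y$. This is a formal consequence of the definition of $L^p(\mathcal X)$ as an inverse limit with transition maps $(f_{ij})_*$ and the identification with multilinear forms via the isomorphism $M^p(\mathcal X) \cong M^1(\mathcal X)^p$; the requirement that $\pi_Y(\alpha)$ and $\pi_Z(\alpha)$ be intertwined by $f_*$ for any morphism $f : Y \to Z$ in $I(X)$ is exactly what makes the definition of the evaluation $\alpha(\mathcal L_1,\ldots,\mathcal L_p)$ independent of the choice of dominating model. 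This is the main (and only) subtle step; everything else is a direct application of already-established facts.
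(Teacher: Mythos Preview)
Your proposal is correct and follows essentially the same approach as the paper: choose a single $Y\in I(X)$ on which all the $\mathcal L_i$ are represented by nef classes, observe that $\alpha(\mathcal L_1,\ldots,\mathcal L_p)=\pi_Y(\alpha)(\mathcal L_1,\ldots,\mathcal L_p)$, and apply Lemma~\ref{Lemma60}. The paper's proof is a two-line version of yours; you have simply spelled out the directedness argument, the preservation of nefness under pullback via (\ref{eq91}), and the compatibility identity that the paper leaves implicit.
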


\begin{proof} Suppose that $Y\in I(X)$ is such that $\mathcal L_1,\ldots,\mathcal L_p$ are represented by classes in $M^1(Y)$. Then
$$
\alpha(\mathcal L_1,\ldots,\mathcal L_p)=\pi_Y(\alpha)(\mathcal L_1,\ldots,\mathcal L_p)\ge 0
$$
by Lemma \ref{Lemma60}, since $\pi_Y(\alpha)\in \mbox{Psef}(L^p(Y))$.
\end{proof}

 \begin{Lemma} Suppose that $V\subset Y$ is a $p$-dimensional closed subvariety of $Y$. Then there exists $\alpha\in \mbox{Psef}(L^p(\mathcal X))$ such that $\pi_Y(\alpha)=\sigma_V$.
 \end{Lemma}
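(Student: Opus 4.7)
The plan is to build $\alpha$ as the inverse-limit element defined by a coherent family of (rescaled) cycle classes on the birational models dominating $Y$, the choice being made uniformly by fixing a single valuation of $k(X)$.

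First, I would reduce to the cofinal subsystem $J_Y = \{Z \in I(X) \mid Z \geq Y\}$: for any $Z \in I(X)$, the closure of the graph of the birational map $Z \dashrightarrow Y$ inside $Z \times_k Y$ lies in $I(X)$ and dominates both $Y$ and $Z$, so $J_Y$ is cofinal in $I(X)$. By the universal property of the inverse limit, it then suffices to construct a compatible family $(\alpha_Z)_{Z \in J_Y}$ of pseudoeffective classes with $\alpha_Y = \sigma_V$; this will extend uniquely to an element of $L^p(\mathcal X)$, and pseudoeffectivity on a cofinal subset is enough.

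Next, I would fix a valuation $v$ of $k(X) = k(Y)$ whose center on $Y$ equals the generic point $\xi$ of $V$ and whose residue field $k_v$ is a finite extension of $k(V)$. Existence of such a $v$ is standard: composing DVRs along a flag $V \subset W_1 \subset \cdots \subset W_{d-p-1} \subset Y$ of irreducible closed subvarieties of strictly increasing dimension (chosen on a suitable normal model) yields a rational-rank-$(d-p)$ valuation whose residue field has transcendence degree $p$ over $k$, contains $k(V)$, and is hence a finite extension of $k(V)$. For each $Z \in J_Y$ with structure morphism $h_Z : Z \to Y$, the valuative criterion of properness provides a unique center $\eta_Z \in Z$ of $v$, and $h_Z(\eta_Z) = \xi$. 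The inclusions $k(\xi) \hookrightarrow k(\eta_Z) \hookrightarrow k_v$ force $e_Z := [k(\eta_Z) : k(V)]$ to be finite and $\dim \overline{\{\eta_Z\}} = \mathrm{tr.deg}_k\, k(\eta_Z) = p$. I then set $V^{*}_Z := \overline{\{\eta_Z\}}$ (reduced) and $\alpha_Z := (1/e_Z)\,\sigma_{V^{*}_Z} \in L^p(Z)$.

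Each $\alpha_Z$ is a nonnegative multiple of a cycle class and is therefore pseudoeffective; by Proposition I.2.6 of \cite{K} one has $(h_Z)_*(\alpha_Z) = \sigma_V$, and in particular $\alpha_Y = \sigma_V$ since $\eta_Y = \xi$, $V^{*}_Y = V$, and $e_Y = 1$. Compatibility across $J_Y$ is functorial: given $g : Z' \to Z$ in $J_Y$, the center of $v$ on $Z$ is the image under $g$ of the center of $v$ on $Z'$, so $g(\eta_{Z'}) = \eta_Z$ and $g(V^{*}_{Z'}) = V^{*}_Z$; the multiplicativity $e_{Z'} = e_Z \cdot [k(V^{*}_{Z'}) : k(V^{*}_Z)]$ combined with Proposition I.2.6 of \cite{K} yields $g_*(\alpha_{Z'}) = \alpha_Z$. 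The family $(\alpha_Z)_{Z \in J_Y}$ thus defines an element $\alpha \in L^p(\mathcal X)$, pseudoeffective because $\pi_Z(\alpha) = \alpha_Z \in \mathrm{Psef}(L^p(Z))$ for every $Z$, and with $\pi_Y(\alpha) = \sigma_V$. The one non-formal point to pin down carefully is the existence of the valuation $v$ with the prescribed residue field; once that standard valuation-theoretic construction is in hand, the rest reduces to functoriality of valuation centers under proper morphisms and the projection formula.
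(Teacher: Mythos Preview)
Your proof is correct and follows essentially the same approach as the paper's: both fix a valuation of $k(X)$ with center $V$ on $Y$ and define $\alpha$ via the (suitably rescaled) cycle classes of the centers of that valuation on the other models. The only differences are cosmetic---the paper quotes Zariski--Samuel to obtain a rank~$1$, $p$-dimensional valuation rather than your flag valuation, and it defines $\pi_Z(\alpha)$ directly on all of $I(X)$ (setting it to zero when $\dim V_Z<p$) instead of passing to the cofinal subsystem $J_Y$.
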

 
 \begin{proof} By the existence theorem, Theorem 37 of Section 16, Chapter VI, page 106 \cite{ZS2}, there exists a rank 1 $p$-dimensional valuation  $\nu$ of $k(X)$ whose center on $X$ is $V$. For $Z\in I(X)$, let $V_Z$ be the center of $\nu$ on $Z$. Define
 $\alpha\in L^p(\mathcal X)$ by
 $$
 \pi_Z(\alpha)=\frac{[k(V_W):k(V_Z)]}{[k(V_W):k(V)]}\sigma_{V_Z}
 $$
 if $\dim V_Z=p$ and there exists a diagram in $I(X)$
 $$
 \begin{array}{lllll}
 &&W&&\\
 &\swarrow&&\searrow&\\
 Y&&&&Z.
 \end{array}
 $$
 Define $\pi_Z(\alpha)=0$ if $\dim V_Z<p$.
 \end{proof}

\begin{Lemma}\label{Lemma2} Suppose that $\alpha\in \mbox{Psef}(L^p(\mathcal X))$. Then the set
$$
\{\beta\in L^p(\mathcal X) | 0\le \beta \le \alpha\}
$$
is compact.
\end{Lemma}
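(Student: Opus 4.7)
The plan is to realize $K := \{\beta \in L^p(\mathcal X) : 0 \le \beta \le \alpha\}$ as a closed subset of a compact product space, and conclude via Tychonoff's theorem. The key input is Lemma \ref{Lemma7}, which gives compactness at each finite level $Y$; the task is to transport this to the inverse limit.

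First I would recall the standard topological description of $L^p(\mathcal X)$. Since each $L^p(Y)$ is a Hausdorff (finite-dimensional) real vector space and the transition maps $(f_{ij})_*$ are continuous, the inverse limit sits as a closed topological subspace
$$
L^p(\mathcal X) \;\hookrightarrow\; \prod_{Y \in I(X)} L^p(Y),
$$
cut out by the compatibility relations $(f_{ij})_*(\beta_Y) = \beta_{Y'}$, and carrying the subspace topology of the product. For each $Y \in I(X)$ put
$$
K_Y := \{\gamma \in L^p(Y) : 0 \le \gamma \le \pi_Y(\alpha)\}.
$$
By the definition of pseudoeffectivity in $L^p(\mathcal X)$, $\pi_Y(\alpha) \in \mbox{Psef}(L^p(Y))$ for every $Y$, so Lemma \ref{Lemma7} tells us that each $K_Y$ is compact.

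Next I would identify $K$ with $L^p(\mathcal X) \cap \prod_Y K_Y$. Indeed, a compatible system $(\gamma_Y)_Y$ lies in $K$ if and only if both $\gamma_Y \in \mbox{Psef}(L^p(Y))$ and $\pi_Y(\alpha) - \gamma_Y \in \mbox{Psef}(L^p(Y))$ for every $Y \in I(X)$, which is exactly the condition $\gamma_Y \in K_Y$ for all $Y$ (using the definition of the partial order on $L^p(\mathcal X)$). By Tychonoff, $\prod_Y K_Y$ is compact, and its intersection with the closed subset $L^p(\mathcal X)$ of the ambient product is compact as well; this intersection is $K$.

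The main point to verify carefully is that the inverse limit topology on $L^p(\mathcal X)$ really coincides with the subspace topology inherited from $\prod_Y L^p(Y)$ and that the compatibility relations define a closed subset — both of which reduce to continuity of the pushforwards $(f_{ij})_*$ between Hausdorff spaces and are standard for inverse limits of topological vector spaces (cf.\ the references to \cite{D} cited earlier in this section). Once this is in hand, the argument above goes through without further subtlety.
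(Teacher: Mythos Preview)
Your proof is correct and is essentially the same as the paper's: both define $K_Y=\{\gamma\in L^p(Y):0\le\gamma\le\pi_Y(\alpha)\}$, identify $K$ with the inverse limit (equivalently, $L^p(\mathcal X)\cap\prod_Y K_Y$), and conclude compactness from compactness of each $K_Y$ (Lemma~\ref{Lemma7}) via Tychonoff. The paper simply quotes the standard fact that an inverse limit of compact Hausdorff spaces is compact (citing \cite{D}), whereas you unpack that fact explicitly through the product embedding; there is no substantive difference.
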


\begin{proof} Let $K=\{\beta\in L^p(\mathcal X) | 0\le \beta \le \alpha\}$ and $K_Y=\{\beta\in L^p(Y) | 0\le \beta \le \pi_Y(\alpha)\}$
for $Y\in I(X)$. The statement that $\beta\in K$ is equivalent to the statement that  $\beta$ and $\alpha-\beta$ are in $\mbox{Psef}(L^p(\mathcal X))$, which is equivalent to the statement that  $\pi_Y(\beta)$ and $\pi_Y(\alpha)-\pi_Y(\beta)$ are in $\mbox{Psef}(L^p(Y))$ for all $Y\in I(X)$, which is the statement that $\pi_Y(\beta)\in K_Y$ for all $Y\in I(X)$.
Thus $K=\cap_Y\pi_Y^{-1}(K_Y)$. The $K_Y$ form an inverse system and $\lim_{\leftarrow}K_Y$ is homeomorphic to the subspace 
$\cap_Y\pi_Y^{-1}(K_Y)$ of $L^p(\mathcal X)$ (c.f. 2.8, Appendix 2 \cite{D}). Since the $K_Y$ are all compact by Lemma \ref{Lemma7},
$\lim_{\leftarrow} K_Y$ is compact (c.f. 2.4, Appendix 2 \cite{D}).
\end{proof}

\begin{Lemma}\label{Lemma4} Suppose that $\alpha_i\in M^1(\mathcal X)$ for $1\le i\le p$, with $\alpha_1$ psef and $\alpha_i$ nef for $i\ge 2$. Then $\alpha_1\cdot\ldots\cdot \alpha_p\in L^{d-p}(\mathcal X)$ is psef.
\end{Lemma}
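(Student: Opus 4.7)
The plan is to reduce to a finite-dimensional claim on a single projective $Y\in I(X)$ and then prove that claim by iterating the statement that intersection with a nef divisor preserves pseudoeffectivity in $L^{\bullet}(Y)$.

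For the reduction, I would use the directedness of $I(X)$ to choose a single $Y_0\in I(X)$ on which $\alpha_1$ is represented by $\beta_1\in \mbox{Psef}(Y_0)$ and each $\alpha_i$ ($i\ge 2$) by $\beta_i\in \mbox{Nef}(Y_0)$; (\ref{eq91}) and (\ref{eq93}) guarantee that pullbacks preserve nefness and psefness. For an arbitrary $Z\in I(X)$, pick $W\in I(X)$ with morphisms $f:W\to Y_0$ and $g:W\to Z$. The diagrams (\ref{eq1}) give
\[
\pi_Z(\alpha_1\cdot\ldots\cdot\alpha_p)=g_*(f^*\beta_1\cdot\ldots\cdot f^*\beta_p),
\]
and Lemma \ref{Lemma1} reduces psefness on $Z$ to psefness on $W$. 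Thus it suffices to prove: for any projective $Y$ with $\gamma_1\in \mbox{Psef}(Y)$ and $\gamma_2,\ldots,\gamma_p\in \mbox{Nef}(Y)$, the product $\gamma_1\cdot\ldots\cdot\gamma_p$ lies in $\mbox{Psef}(L^{d-p}(Y))$.

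The key intermediate step is: for projective $Y$, if $\sigma\in \mbox{Psef}(L^q(Y))$ and $\gamma\in \mbox{Nef}(Y)$, then $\gamma\cdot\sigma\in \mbox{Psef}(L^{q-1}(Y))$. I would first take $\gamma$ ample and $\sigma=\sigma_V$ for a closed integral $q$-dimensional subvariety $V\subset Y$. Lemma \ref{PA} applied to the coherent sheaf $\mathcal O_V$ on $Y$ yields $s\in\ZZ_{>0}$ and an effective Cartier divisor $D\in|s\gamma|$ with $D\cap\mbox{Ass}(\mathcal O_V)=\emptyset$, so $D$ avoids the generic point of $V$. Writing the associated cycle of the restricted Cartier divisor $D|_V$ as $\sum n_jW_j$ with $W_j$ closed integral $(q-1)$-dimensional subvarieties, a projection-formula argument from \cite{K} gives
\[
s\gamma\cdot\sigma_V=\sum n_j\sigma_{W_j}\in \mbox{Psef}(L^{q-1}(Y)).
\]
$\RR$-linearity extends this to nonnegative finite combinations of classes $\sigma_V$, and then continuity of the intersection pairing combined with closedness of $\mbox{Psef}(L^{q-1}(Y))$ extends it to all $\sigma\in \mbox{Psef}(L^q(Y))$. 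Since $\mbox{Nef}(Y)=\overline{\mbox{Amp}(Y)}$, a last continuity argument in $\gamma$ completes the key step.

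To conclude, observe that $\gamma_1$ defines a class in $L^{d-1}(Y)$ lying in $\mbox{Psef}(L^{d-1}(Y))$: for a prime divisor this is obvious, and closedness of the target cone extends it to every element of $\mbox{Psef}(Y)$. Iterating the key step with $\gamma_2,\ldots,\gamma_p$ then yields $\gamma_1\cdot\ldots\cdot\gamma_p\in \mbox{Psef}(L^{d-p}(Y))$, finishing the proof. I expect the main obstacle to be the ample-subvariety base case of the key step: Lemma \ref{PA} provides the crucial moving argument that replaces $s\gamma$ by an effective Cartier divisor avoiding $V$, and the subsequent identification of $s\gamma\cdot\sigma_V$ with a cycle class rests on the Snapper--Kleiman intersection theory; the two successive density arguments in $\sigma$ and in $\gamma$ are then routine from continuity.
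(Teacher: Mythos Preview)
Your proof is correct and uses the same core ingredients as the paper's: the reduction to a single projective $Y$ via directedness together with Lemma \ref{Lemma1}, the moving argument supplied by Lemma \ref{PA}, and the two density/continuity passages (ample to nef, effective to pseudoeffective). The only difference is organizational: you isolate and iterate the one-step claim ``nef class times $\mbox{Psef}(L^q(Y))$ lands in $\mbox{Psef}(L^{q-1}(Y))$'', whereas the paper perturbs all the $\alpha_i$ simultaneously by $tH$, expands by multilinearity into products of integral ample and effective divisors, and performs the iterated restriction (equation (\ref{eq200})) in a single pass.
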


\begin{proof} There exists $Y\in I(X)$  such that $\alpha_1,\ldots, \alpha_p$ are represented on $Y$ by classes $\mathcal N_1,\ldots,\mathcal N_p\in M^1(Y)$, with $\mathcal N_1$ psef and $\mathcal N_i$ nef for $i\ge 2$. We will show that 
$\mathcal N_1\cdot\ldots\cdot \mathcal N_p\in L^{d-p}(Y)$ is psef. Let $H$ be very  ample  on $Y$. We will show that
$$
(\mathcal N_1+tH)\cdot(\mathcal N_2+tH)\cdot \ldots \cdot (\mathcal N_p+tH)
$$
is psef for all $t> 0$. By continuity of the intersection product, and the fact that $\mbox{Psef}(L^{d-p}(Y))$ is closed in $M^1(Y)$,  we will conclude that $\mathcal N_1\cdot \ldots\cdot \mathcal N_p$ is psef.

For $2\le j\le p$, and since the closure of $\mbox{Amp}(Y)$ is $\mbox{Nef}(Y)$, we have expressions
$$
N_i+tH\equiv \sum a_{ij}H_{ij}
$$
with $H_{ij}$ integral ample divisors on $Y$ and $a_{ij}\in \RR_{\ge 0}$. Since the closure of $\mbox{Big}(Y)$ is $\mbox{Psef}(Y)$, we have an expression
$$
\mathcal N_1+tH\equiv \sum b_jD_j
$$
with $D_j$ integral divisors on $Y$ and $b_j\in \RR_{\ge 0}$. By multilinearity of the intersection product, it suffices to show that
each
$$
H_2\cdot \ldots\cdot H_p\cdot D
$$
is psef, where $H_i$ is any of the $H_{ij}$ and $D$ is any of the $D_j$.  Suppose that $\mathcal L_1,\ldots,\mathcal L_{d-p}\in M^1(Y)$.
Using Propositions I.2.4 and I.2.5 of \cite{K} and Lemma \ref{PA}, we compute
\begin{equation}\label{eq200}
\begin{array}{l}
(\mathcal L_1\cdot \ldots\cdot \mathcal L_{d-p}\cdot \mathcal O_Y(H_2)\cdot \ldots\cdot \mathcal O_Y(H_p)\cdot \mathcal O_Y(D))\\
=  (\mathcal L_1\cdot \ldots\cdot \mathcal L_{d-p}\cdot \mathcal O_Y(H_2)\cdot \ldots\cdot \mathcal O_Y(H_p)\cdot D)\\
=(\mathcal L_1\otimes \mathcal O_D\cdot \ldots\cdot\mathcal L_{d-p}\otimes \mathcal O_D\cdot \mathcal O_Y(H_1)\otimes \mathcal O_D\cdot \ldots\cdot \mathcal O_Y(H_p)\otimes \mathcal O_D)\\
=\frac{1}{s_p}(\mathcal L_1\otimes \mathcal O_D\cdot \ldots\cdot \mathcal L_{d-p}\otimes \mathcal O_D\cdot \mathcal O_Y(H_2)\otimes \mathcal O_D\cdot \ldots\cdot \mathcal O_Y(H_{p-1})\otimes \mathcal O_D\cdot \Delta_p)\\
\mbox{ where $s_p\in \ZZ_{>0}$ and $\Delta_p\in |\mathcal O_Y(s_pH_{p})\otimes \mathcal O_D|$ is such that $\Delta_p\cap\mbox{Ass}(\mathcal O_D)=\emptyset$,}\\
=\frac{1}{s_p}(\mathcal L_1\otimes \mathcal O_{\Delta_p}\cdot \ldots\cdot\mathcal L_{d-p}\otimes \mathcal O_{\Delta_p}\cdot \mathcal O_Y(H_2)\otimes \mathcal O_{\Delta_p}\cdot \ldots\cdot \mathcal O_Y(H_{p-1})\otimes \mathcal O_{\Delta_p}).
\end{array}
\end{equation}
Iterating, we obtain a $p$-cycle $W=\sum a_iV_i$ on $Y$, with $V_i$ closed $p$-dimensional subvarieties and $a_i$ positive rational numbers such that 
$$
(\mathcal L_1\cdot\ldots\cdot \mathcal L_{d-p}\cdot \mathcal O_Y(H_2)\cdot\ldots\cdot \mathcal O_Y(H_p)\cdot \mathcal O_Y(D))
=\sum a_i\sigma_{V_i}(\mathcal L_1,\ldots,\mathcal L_p)
$$
for all $\mathcal L_1,\ldots,\mathcal L_p\in M^1(Y)$.
We thus have that $\pi_Y(\alpha_1\cdot\ldots\cdot \alpha_p)\in \mbox{Psef}(L^p(Y))$. 

If $f:Z\rightarrow Y\in I(X)$, then $\alpha_1$ is represented in $M^1(Z)$
by the psef class $f^{*}(\mathcal N_1)$ and $\alpha_2,\ldots,\alpha_p$ are represented by the nef classes $f^{*}(\mathcal N_2),
\ldots, f^{*}(\mathcal N_p)$. Thus the above argument shows that $\pi_Z(\alpha_1\cdot\ldots\cdot \alpha_p)\in \mbox{Psef}(L^p(Z))$.
Since $I(X)$ is directed, and by Lemma \ref{Lemma1}, we have that $\pi_Z(\alpha_1\cdot\ldots\cdot \alpha_p)\in \mbox{Psef}(L^p(Z))$
for all $Z\in I(X)$. Thus $\alpha_1\cdot\ldots\cdot \alpha_p\in \mbox{Psef}(L^p(\mathcal X))$.
\end{proof}

\begin{Proposition}\label{PropNef} Suppose that $\alpha_i$ and $\alpha_i'$ for $1\le i\le p$ are nef classes in $M^1(\mathcal X)$,
and  that $\alpha_i\ge \alpha_i'$ for $i=1,\ldots,p$. Then
$$
\alpha_1\cdot\ldots\cdot\alpha_p\ge \alpha_1'\cdot\ldots\cdot\alpha_p'
$$
in $L^{d-p}(\mathcal X)$.
\end{Proposition}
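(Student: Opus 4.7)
The natural approach is a standard telescoping identity. Since the intersection product on $M^p(\mathcal X)$ is multilinear (Lemma \ref{Intcont}), I would write
\begin{equation*}
\alpha_1\cdot\ldots\cdot\alpha_p - \alpha_1'\cdot\ldots\cdot\alpha_p'
= \sum_{i=1}^{p} \alpha_1'\cdot\ldots\cdot\alpha_{i-1}'\cdot(\alpha_i-\alpha_i')\cdot\alpha_{i+1}\cdot\ldots\cdot\alpha_p,
\end{equation*}
with the convention that empty products are omitted. The goal then reduces to showing that each summand lies in $\mathrm{Psef}(L^{d-p}(\mathcal X))$, since that cone is convex (Lemma \ref{Lemma3}).

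For the $i$-th summand, the factor $\alpha_i-\alpha_i'$ is pseudoeffective in $M^1(\mathcal X)$ by the hypothesis $\alpha_i\ge\alpha_i'$ (using the definition of $\ge$ from Section \ref{SecPO} applied to the cone $\mathrm{Psef}^1(\mathcal X)$). The remaining factors $\alpha_1',\ldots,\alpha_{i-1}',\alpha_{i+1},\ldots,\alpha_p$ are all nef in $M^1(\mathcal X)$ by hypothesis. Reordering so that the psef class appears first (the intersection product is symmetric multilinear), Lemma \ref{Lemma4} applies directly and gives that each such summand is a pseudoeffective class in $L^{d-p}(\mathcal X)$.

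Summing up, $\alpha_1\cdot\ldots\cdot\alpha_p - \alpha_1'\cdot\ldots\cdot\alpha_p' \in \mathrm{Psef}(L^{d-p}(\mathcal X))$, which is exactly the assertion $\alpha_1\cdot\ldots\cdot\alpha_p\ge \alpha_1'\cdot\ldots\cdot\alpha_p'$. The only subtlety worth double-checking is that the relation $\alpha_i\ge \alpha_i'$ on $M^1(\mathcal X)$ really does translate into $\alpha_i-\alpha_i'$ being pseudoeffective in the sense used by Lemma \ref{Lemma4}; this is immediate from how the partial order on $M^1(\mathcal X)$ was defined via the cone $\mathrm{Psef}^1(\mathcal X)$. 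No step here looks like a serious obstacle—the entire proof is bookkeeping once Lemma \ref{Lemma4} (the real content) is in hand.
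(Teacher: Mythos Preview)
Your proof is correct and essentially identical to the paper's: both use a telescoping decomposition of $\alpha_1\cdots\alpha_p-\alpha_1'\cdots\alpha_p'$ into terms with one psef factor $\alpha_i-\alpha_i'$ and the rest nef, then apply Lemma~\ref{Lemma4} and symmetry. The only cosmetic difference is that the paper telescopes with the $\alpha_j$'s to the left and the $\alpha_j'$'s to the right of the difference factor, whereas you do the reverse.
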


\begin{proof} From  symmetry of the intersection product, and the assumption that $\alpha_{i}-\alpha_{i}'\ge 0$ for all $i$, we obtain from Lemma \ref{Lemma4} that
$$
\alpha_1\cdot\ldots\cdot\alpha_{i-1}\cdot(\alpha_{i}-\alpha_{i}')\cdot\alpha_{i+1}'\cdot\ldots\cdot\alpha_p'\ge 0
$$
for $1\le i\le p$. The proposition now follows from the multilinearity of the intersection product.
\end{proof}

\begin{Corollary}\label{Cor11} Suppose that $\alpha_1,\ldots,\alpha_d\in M^1(\mathcal X)$ are such that for some $p$ with $0\le p\le d$
$\alpha_i$ is nef for $i\le p$, and $\omega$ is a nef class in $M^1(\mathcal X)$ such that $\omega\pm \alpha_i$ is nef for each $i>p$.
Then
$$
|(\alpha_1\cdot\ldots\cdot \alpha_d)|\le C(\alpha_1\cdot\ldots\cdot \alpha_p\cdot\omega^{d-p})
$$
for some constant $C$ depending only on $(\omega^d)$.
\end{Corollary}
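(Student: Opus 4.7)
The plan is to reduce all intersections to products of nef classes by means of the identity
$$\alpha_i = \tfrac{1}{2}\bigl((\omega+\alpha_i) - (\omega-\alpha_i)\bigr)$$
for each $i>p$, which exhibits $\alpha_i$ as a difference of two nef classes (both $\omega+\alpha_i$ and $\omega-\alpha_i$ are nef by hypothesis). Substituting into $\alpha_{p+1}\cdot\ldots\cdot\alpha_d$ and expanding by the multilinearity of the intersection product (Lemma \ref{Intcont}), I obtain a signed sum of $2^{d-p}$ terms, each of the form
$$(\alpha_1\cdot\ldots\cdot\alpha_p\cdot\gamma_{p+1}\cdot\ldots\cdot\gamma_d),$$
with $\gamma_i\in\{\omega+\alpha_i,\,\omega-\alpha_i\}$ nef, weighted by a sign and the global factor $2^{-(d-p)}$.

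Next I would bound each such term from above and below. All factors $\alpha_1,\ldots,\alpha_p,\gamma_{p+1},\ldots,\gamma_d$ are nef, so Lemma \ref{Lemma4} (applied to $d-1$ of them) yields a psef class in $L^1(\mathcal{X})$, and Lemma \ref{Lemma61} applied with the remaining nef factor gives
$$0\le(\alpha_1\cdot\ldots\cdot\alpha_p\cdot\gamma_{p+1}\cdot\ldots\cdot\gamma_d).$$
For the matching upper bound, the crucial observation is that $2\omega-\gamma_i$ is either $\omega-\alpha_i$ or $\omega+\alpha_i$, hence is nef. Thus Proposition \ref{PropNef} applies and, used iteratively to replace each $\gamma_i$ by $2\omega$, produces
$$(\alpha_1\cdot\ldots\cdot\alpha_p\cdot\gamma_{p+1}\cdot\ldots\cdot\gamma_d)\le(\alpha_1\cdot\ldots\cdot\alpha_p\cdot(2\omega)^{d-p})=2^{d-p}(\alpha_1\cdot\ldots\cdot\alpha_p\cdot\omega^{d-p}).$$

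Finally I would assemble the pieces. Taking absolute values in the expansion and applying the triangle inequality gives
$$|(\alpha_1\cdot\ldots\cdot\alpha_d)|\le 2^{-(d-p)}\cdot 2^{d-p}\cdot 2^{d-p}\,(\alpha_1\cdot\ldots\cdot\alpha_p\cdot\omega^{d-p})=2^{d-p}(\alpha_1\cdot\ldots\cdot\alpha_p\cdot\omega^{d-p}),$$
so $C=2^{d-p}$ suffices. I do not anticipate a genuine obstacle: the only point needing care is that Proposition \ref{PropNef} demands nef inputs, and this is automatic from the construction of the $\gamma_i$ and from the hypothesis that $\alpha_1,\ldots,\alpha_p$ are nef. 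Everything else is bookkeeping about the sign expansion.
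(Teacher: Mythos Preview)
Your proof is correct and follows essentially the same approach as the paper's: write each $\alpha_i$ for $i>p$ as a difference of nef classes, expand by multilinearity, and bound every resulting term via Proposition~\ref{PropNef} using $\gamma_i\le 2\omega$. The only cosmetic difference is that the paper uses the decomposition $\alpha_i=(\alpha_i+\omega)-\omega$ (setting $\beta_i=\alpha_i+\omega$) rather than your symmetric version $\alpha_i=\tfrac{1}{2}\bigl((\omega+\alpha_i)-(\omega-\alpha_i)\bigr)$; both lead to a constant $C$ depending only on $d-p$ (hence trivially ``only on $(\omega^d)$''), with your bound $C=2^{d-p}$ in fact slightly sharper than the $3^{d-p}$ one gets from the paper's expansion.
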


\begin{proof} Let $\beta_i=\alpha_i+\omega$ for $p< i\le d$. Expand 
$$
(\alpha_1\cdot\ldots\cdot \alpha_p\cdot \alpha_{p+1}\cdot\ldots\cdot \alpha_d)=(\alpha_1\cdot\ldots\cdot\alpha_p\cdot(\beta_{p+1}-\omega)\cdot\ldots\cdot (\beta_d-\omega))
$$
using multilinearity, to get an expression with terms
$$
(\alpha_1\cdot\ldots\cdot\alpha_p\cdot\beta_{j_1}\cdot\ldots\cdot\beta_{j_r}\cdot\omega^{d-p-r}).
$$
By our assumption, $0\le \beta_i\le 2\omega$ are nef for $p<i\le d$. The bounds thus follow from Proposition \ref{PropNef}.
\end{proof}

\section{The positive intersection product}\label{SecPos}

We  continue to assume that $X$ is a complete $d$-dimensional variety over a field $k$.

A partially ordered set is directed if any  two elements can be dominated by a third. A partially ordered set is filtered if any two elements dominate a third.

\begin{Lemma}\label{Lemma9}
Let $V$ be a Hausdorff topological vector space and $K$ a strict closed convex cone in $V$ with associated partial order relation $\le$.
Then any nonempty subset $S$ of $V$ which is directed with respect to $\le$ and  is contained in a compact subset of $V$
has a least upper bound with respect to $\le$ in $V$.
\end{Lemma}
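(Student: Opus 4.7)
The plan is to realize $S$ as a net, extract a convergent subnet via compactness, and then use the fact that $K$ is closed (so the partial order is preserved under taking limits) to show that the limit point is the least upper bound.

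More precisely, regard $S$ as a net $\sigma \colon S \to V$, $\sigma(s) = s$, indexed by itself with the directed order $\le$. Since $S$ is contained in a compact subset $C$ of the Hausdorff space $V$, standard net compactness (every net in a compact space has a convergent subnet, see e.g.\ Kelley's \emph{General Topology}) yields a cofinal map $\phi \colon A \to S$ from some directed set $A$ such that the subnet $\sigma \circ \phi$ converges to some point $s^{\ast} \in V$.

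First I would check that $s^{\ast}$ is an upper bound for $S$. Fix $s_0 \in S$. By cofinality of $\phi$ in $S$ and directedness of $S$, there is $\alpha_0 \in A$ such that $\phi(\alpha) \ge s_0$ in $S$ for all $\alpha \ge \alpha_0$; that is, $\phi(\alpha) - s_0 \in K$. Since addition/subtraction is continuous in the topological vector space $V$, the subnet $\phi(\alpha) - s_0$ converges to $s^{\ast} - s_0$, and since $K$ is closed this limit lies in $K$. Hence $s_0 \le s^{\ast}$, so $s^{\ast}$ is an upper bound of $S$.

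Next I would check that $s^{\ast}$ is the \emph{least} upper bound. Let $u \in V$ be any upper bound of $S$, so $u - \phi(\alpha) \in K$ for every $\alpha \in A$. Again using continuity of subtraction and closedness of $K$, the limit $u - s^{\ast}$ lies in $K$, i.e.\ $s^{\ast} \le u$. Uniqueness of the least upper bound is automatic: if $s^{\ast}$ and $s^{\ast\ast}$ both satisfy the least-upper-bound property, then $s^{\ast\ast} - s^{\ast} \in K \cap (-K) = \{0\}$ by the strictness of $K$, so they coincide. The only mild subtlety is to keep the net/subnet bookkeeping straight so that ``eventually $\phi(\alpha) \ge s_0$'' really does follow from cofinality together with directedness of $S$; once that is in place the argument is routine.
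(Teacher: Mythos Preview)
Your proof is correct and follows essentially the same approach as the paper's: both view $S$ as a net, use compactness to produce a limit point (the paper uses an accumulation point of the net $S$ rather than passing to a convergent subnet), and then exploit closedness of $K$ to verify that this point is the least upper bound. The only cosmetic difference is that the paper argues each step by contradiction---translating $K$ to obtain a closed set $\delta+K$ or $y-K$ and finding an open neighborhood of the accumulation point disjoint from it---whereas you argue directly via continuity of subtraction and closedness of $K$.
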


\begin{proof} The set $S$ is a net in $V$ under the partial order $\le$. There exists an accumulation point of the net $S$ in $V$ since $S$ is contained in a compact subset of $V$ (c.f. Exercise 10, page 188  \cite{M}). Let $\gamma$ be an accumulation point. We will show that $\gamma$ is an upper bound of $S$. Suppose not. Then there exists $\delta\in S$ such that $\delta\not\le \gamma$. Thus $\gamma\not\in \delta+K$. $\delta+K$ is closed in $V$ since it is a translate of a closed set. Thus there exists an open neighborhood $U$ of $\gamma$ in $V$ such that $U\cap(\delta+K)=\emptyset$.
Since $\gamma$ is an accumulation point, there exists $\epsilon\in S$ such that $\delta\le\epsilon$ and $\epsilon\in U$. But
$\epsilon\in U\cap (\delta+K)=\emptyset$, a contradiction.

Suppose that $y\in V$ is an upper bound of $S$. Then we have  that $S\subset y-K$. Suppose that $\gamma\not\le y$. Then $\gamma\not\in y-K$ so there exists an open neighborhood $U$ of $\gamma$ in $V$ such that $U\cap (y-K)=\emptyset$. There exists $\epsilon\in S$ such that $\epsilon \in U$. $\epsilon\le y$ imples $\epsilon\in y-K$ so $U\cap (y-K)\ne\emptyset$, a contradiction. Thus
$\gamma\le y$ and  $\gamma$ is the (necessarily unique) least upper bound of $S$. In particular, the net $S$ converges to $\gamma$.
\end{proof}

\begin{Lemma}\label{Lemma10} Let $\alpha\in M^1(\mathcal X)$  be big. Then the set $\mathcal D(\alpha)$ of effective $\QQ$-divisors in $M^1(\mathcal X)$ such that $\alpha-D$ is nef is nonempty and filtered.
\end{Lemma}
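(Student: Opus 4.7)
I would prove the two claims separately, dispatching nonemptiness quickly and concentrating the real work on the filtered property, which I would establish by a componentwise-minimum construction combined with a support-disjointness argument.

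For nonemptiness: since $\alpha$ is big in $M^1(\mathcal X)$, it admits a big representative on some $Y \in I(X)$. Appealing to (\ref{eq80}), I write this representative as $H + E$ with $H$ an ample $\RR$-divisor and $E$ an effective $\QQ$-divisor on $Y$. Then $D := E$ is effective with $\alpha - D = H$ ample, in particular nef, so $D \in \mathcal D(\alpha)$.

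For the filtered property, given $D_1, D_2 \in \mathcal D(\alpha)$, I first pass to a common $Y \in I(X)$ dominating the models on which the $D_j$ live, so that both are represented as effective $\QQ$-Cartier divisors on $Y$. Enumerating the prime components in $\mathrm{supp}(D_1) \cup \mathrm{supp}(D_2)$ as $E_1, \ldots, E_r$, I write $D_1 = \sum a_i E_i$ and $D_2 = \sum b_i E_i$ with $a_i, b_i \in \QQ_{\geq 0}$. My candidate is
\[
D_3 = \sum_{i=1}^r \min(a_i, b_i)\, E_i.
\]
By construction, $D_j - D_3$ is effective for $j = 1, 2$, so $D_3 \le D_j$ in the pseudoeffective partial order. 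The remaining content is that $\alpha - D_3$ is nef. For each $j$ I decompose $\alpha - D_3 = (\alpha - D_j) + (D_j - D_3)$, where the first summand is nef by hypothesis and the effective summand $D_j - D_3$ is supported on $\{E_i : a_i > b_i\}$ when $j = 1$ and on $\{E_i : b_i > a_i\}$ when $j = 2$. The crucial observation is that these two sets of primes are disjoint. Hence any integral curve $C \subset Y$ fails to lie in $\mathrm{supp}(D_j - D_3)$ for at least one $j \in \{1,2\}$; for that $j$ the intersection $(D_j - D_3) \cdot C$ is non-negative, because an effective Cartier divisor has non-negative intersection with any curve not contained in its support. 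Combined with $(\alpha - D_j) \cdot C \geq 0$, this yields $(\alpha - D_3) \cdot C \geq 0$ for every integral $C$, so $\alpha - D_3$ is nef and $D_3 \in \mathcal D(\alpha)$.

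The main obstacle I anticipate is a technical one: $D_3$ and the positive parts $D_j - D_3$, defined by truncating coefficients, are Weil $\QQ$-divisors whose $\QQ$-Cartier property is not automatic if $Y$ is singular, and the paper cannot assume smooth models since resolution of singularities is unavailable in high dimension in positive characteristic. I would handle this by passing to a further model $Y' \in I(X)$ dominating $Y$ on which the relevant prime components $E_i$ become $\QQ$-Cartier (for instance by blowing up their ideal sheaves), and carrying out the support-disjointness computation above on $Y'$ with the pulled-back divisors; nefness of $\alpha - D_j$ is preserved by pullback via (\ref{eq91}), so nothing in the argument changes.
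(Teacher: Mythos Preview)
Your nonemptiness argument is the same as the paper's. For the filtered property, however, the paper takes a different route, and the gap you flag in your own approach is real and is not closed by the fix you propose.

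The difficulty is exactly that on a singular $Y$ the componentwise minimum $D_3=\sum_i\min(a_i,b_i)E_i$ is only a $\QQ$-Weil divisor; without $\QQ$-Cartierness the number $(\alpha-D_3)\cdot C$ is not defined in the Snapper--Kleiman theory used here, so ``$\alpha-D_3$ is nef'' has no meaning. Your remedy, blowing up the ideal sheaves $\mathcal I_{E_i}$, does not repair this. That blow-up makes each inverse image ideal invertible, cutting out Cartier divisors $F_i$ on $Y'$, but it does not make the prime components of $f^*D_1,f^*D_2$ on $Y'$ individually $\QQ$-Cartier, so recomputing the componentwise minimum on $Y'$ runs into the same obstruction. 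Nor can you simply transport the coefficients: from $\prod_i\mathcal I_{E_i}^{ma_i}\subset\mathcal O_Y(-mD_1)$ one gets $f^*D_1\le\sum_ia_iF_i$, the wrong inequality for concluding $f^*D_1\ge\sum_i\min(a_i,b_i)F_i$. In short, your argument is correct on a $\QQ$-factorial (e.g.\ smooth) model, but producing such a model is precisely what is unavailable here.

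The paper sidesteps this by working with invertible ideal sheaves rather than prime components. Choose $m$ with $mD_1,mD_2$ integral Cartier, set $\mathcal I=\mathcal O_Y(-mD_1)+\mathcal O_Y(-mD_2)\subset\mathcal O_Y$, and let $f:Z\to Y$ be the blow-up of $\mathcal I$, so $\mathcal I\mathcal O_Z=\mathcal O_Z(-D)$ with $D$ effective Cartier by construction. From $\mathcal O_Y(-mD_j)\subset\mathcal I$ one gets $D':=\frac{1}{m}D\le f^*D_j$ for $j=1,2$ automatically. Nefness of $\alpha-D'$ is then obtained not by a curve-by-curve support argument but by global generation: after an arbitrarily small ample perturbation making each $\alpha-D_j$ ample, a suitable multiple $\mathcal I^{n/m}\otimes\mathcal L$ is globally generated on $Y$ (as a quotient of a sum of very ample line bundles), hence its pullback line bundle on $Z$ is globally generated and therefore nef; letting the perturbation tend to zero and using that nefness is a closed condition finishes the proof. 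This construction never requires any prime Weil divisor to be $\QQ$-Cartier.
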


\begin{proof} Let $Y\in I(X)$ be such that  $\alpha$ is represented in $M^1(Y)$ by a big divisor. Then there exists an effective $\QQ$-Cartier divisor $D$ in $M^1(Y)$ such that $\alpha-D$ is ample by (\ref{eq80}). Thus $D(\alpha)$ is nonempty.  

Let $D_1$, $D_2$ be two $\QQ$-Cartier divisors in $M^1(\mathcal X)$ such that $\alpha-D_1$ and $\alpha-D_2$ are nef. 
Let $Y\in I(X)$ be such that both $D_1$ and $D_2$ are represented there. There exists a positive integer $m$ such that $mD_1$ and $mD_2$ are integral Cartier  divisors on $Y$.
Let $\mathcal I=\mathcal O_Y(-mD_1)+\mathcal O_Y(-mD_2)$, an ideal sheaf on $Y$. Let $Z$ be the blow up of the ideal sheaf $\mathcal I$,
with natural morphism $f:Z\rightarrow Y$. $\mathcal I\mathcal O_Z$ is a locally principle ideal sheaf, so it determines an (integral) effective divisor $D$ with $\mathcal I\mathcal O_Z=\mathcal O_Z(-D)$. We have that $\mathcal O_Z(-f^*(mD_i))\subset\mathcal O_Z(-D)$ for $i=1,2$ so that $D':=\frac{1}{m}D\le f^*(D_i)$ for $i=1,2$. We must show that $\alpha-D'$ is nef.
Let $H_1,\ldots,H_r$ be ample divisors on $Y$ whose classes span $M^1(Y)$ as a real vector space. Given $\epsilon>0$, there exist
real numbers $a_i$ with $0\le a_i<\epsilon$ for all $i$, such that $\alpha + a_1H_1+\cdots+a_rH_r$ is a $\QQ$-divisor and  
$(\alpha + a_1H_1+\cdots+a_rH_r)-D_i$ are ample on $Y$ for $i=1,2$. There exists a positive integer $n$ which is divisible by $m$, and such that $n(\alpha + a_1H_1+\cdots+a_rH_r-D_i)$ are very ample integral divisors (so they are generated by global sections).
Let $\mathcal L=n(\alpha + a_1H_1+\cdots+a_rH_r)$, an integral divisor on $Y$. We have a surjection 
$$
(\mathcal O_X(-nD_1)\otimes\mathcal L)\bigoplus(\mathcal O_X(-nD_1)\otimes\mathcal L)\rightarrow \mathcal I^{\frac{n}{m}}\otimes \mathcal L.
$$
Thus $\mathcal I^{\frac{n}{m}}\otimes\mathcal L$ is generated by global sections, so
$$
(\mathcal I^{\frac{n}{m}}\mathcal O_Z)\otimes\mathcal L\cong \mathcal 
O_Z(n(\alpha+a_1f^*(H_1)+\cdots+\cdots+a_rf^*(H_r)-D'))
$$
is generated by global sections. Thus
$$
\alpha+a_1f^*(H_1)+\cdots+a_rf^*(H_r)-D'
$$
is nef. Since nefness is a closed condition, we have that $\alpha-D'$ is nef.
\end{proof}

\begin{Proposition}\label{Prop30} Suppose that $\alpha_1,\ldots,\alpha_p\in M^1(\mathcal X)$ are big. Let
$$
S=\left\{
\begin{array}{l}(\alpha_1-D_1)\cdot\ldots\cdot(\alpha_p-D_p)\in L^{d-p}(\mathcal X)\mbox{ such that } D_1,\ldots,D_p\in M^1(\mathcal X)\\ \mbox{ are effective $\QQ$-Cartier divisors  and $\alpha_i-D_i$ are nef for $1\le i\le p$}.
\end{array}
\right\}
$$
Then
\begin{enumerate}
\item[1)] $S$ is nonempty
\item[2)] $S$ is a directed set with respect to the partial order $\le$ on $L^{d-p}(\mathcal X)$
\item[3)] $S$ has a (unique) least upper bound with respect to $\le$ in $L^{d-p}(\mathcal X)$.
\end{enumerate}
\end{Proposition}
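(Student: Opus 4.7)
My plan is to prove the three assertions by leveraging the filtered structure of $\mathcal D(\alpha_i)$ from Lemma \ref{Lemma10}, the monotonicity of intersection products from Proposition \ref{PropNef}, and the general least upper bound principle of Lemma \ref{Lemma9}. Concretely, parts (1) and (2) follow quickly from Lemma \ref{Lemma10} and Proposition \ref{PropNef}, while part (3) requires producing an upper bound in $L^{d-p}(\mathcal X)$ and invoking compactness via Lemma \ref{Lemma2}.

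For (1), since each $\mathcal D(\alpha_i)$ is nonempty by Lemma \ref{Lemma10}, I pick $D_i \in \mathcal D(\alpha_i)$ and form the intersection product of the nef classes $\alpha_i - D_i$. For (2), given two elements of $S$ coming from $(D_1,\ldots,D_p)$ and $(D_1',\ldots,D_p')$, the filtered property of $\mathcal D(\alpha_i)$ provides $D_i'' \in \mathcal D(\alpha_i)$ with $D_i - D_i''$ and $D_i' - D_i''$ both effective. The nef classes $\alpha_i - D_i''$, $\alpha_i - D_i$, $\alpha_i - D_i'$ then satisfy $\alpha_i - D_i'' \ge \alpha_i - D_i$ and $\alpha_i - D_i'' \ge \alpha_i - D_i'$ in the pseudoeffective order on $M^1(\mathcal X)$, so Proposition \ref{PropNef} gives that $(\alpha_1 - D_1'')\cdots(\alpha_p - D_p'')$ dominates both original products in $L^{d-p}(\mathcal X)$.

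For (3), I apply Lemma \ref{Lemma9} to the directed set $S$ inside the Hausdorff topological vector space $L^{d-p}(\mathcal X)$ with its strict closed convex cone $\mbox{Psef}(L^{d-p}(\mathcal X))$. The crux is exhibiting a compact subset of $L^{d-p}(\mathcal X)$ containing $S$. I pick $Y \in I(X)$ on which every $\alpha_i$ is represented and choose an ample class $\omega$ on $Y$ large enough that $\omega - \alpha_i$ is ample for every $i$; since the ample cone is open, multiplying any fixed ample divisor by a sufficiently large positive integer achieves this. Then $\omega$ is nef and $\omega - (\alpha_i - D_i) = (\omega - \alpha_i) + D_i$ is a sum of pseudoeffective classes, so $\alpha_i - D_i \le \omega$ with both sides nef. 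Proposition \ref{PropNef} yields $(\alpha_1 - D_1)\cdots(\alpha_p - D_p) \le \omega^p$, and Lemma \ref{Lemma4} gives the lower bound $0 \le (\alpha_1 - D_1)\cdots(\alpha_p - D_p)$. Hence $S$ is contained in $\{\beta \in L^{d-p}(\mathcal X) : 0 \le \beta \le \omega^p\}$, which is compact by Lemma \ref{Lemma2}. Lemma \ref{Lemma9} then produces the unique least upper bound.

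The main technical point — and really the only nontrivial step — is the compactness bound in (3); once the global nef majorant $\omega$ is in place, everything reduces cleanly to the already-established machinery. No auxiliary results beyond those cited above seem necessary.
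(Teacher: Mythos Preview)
Your proof is correct and follows essentially the same approach as the paper: parts (1) and (2) via Lemma~\ref{Lemma10} and Proposition~\ref{PropNef}, and part (3) by choosing an ample $\omega$ on some $Y\in I(X)$ dominating all $\alpha_i$, bounding $S$ inside $\{0\le x\le \omega^p\}$ via Proposition~\ref{PropNef}, and applying Lemmas~\ref{Lemma2} and~\ref{Lemma9}. The only difference is that you spell out a few details (e.g., invoking Lemma~\ref{Lemma4} for the lower bound and explaining why $\alpha_i - D_i \le \omega$) that the paper leaves implicit.
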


\begin{proof}  
$S$ is nonempty and directed by Lemma \ref{Lemma10} and Proposition \ref{PropNef}.  Let $Y\in I(X)$ be such that $\alpha_1,\ldots,\alpha_p$ are represented by elements of $M^1(Y)$ and let $\omega\in M^1(Y)$ be an ample class such that $\alpha_i\le \omega$ for all $i$. Then by Proposition \ref{PropNef},
$S$ is a subset of
$$
\{x\in L^{d-p}(\mathcal X)\mid 0\le x\le \omega^p\}
$$
which is compact by Lemma \ref{Lemma2}. The proposition now follows from Lemma \ref{Lemma9}.
\end{proof}

The following definition is well defined by virtue of Proposition \ref{Prop30}.

\begin{Definition}\label{DefPos} Let $\alpha_1,\ldots,\alpha_p\in M^1(\mathcal X)$ be big. Their positive intersection product
$$
<\alpha_1\cdot\ldots\cdot\alpha_p>\in L^{d-p}(\mathcal X)
$$
is defined as the least upper bound of the set of classes
$$
(\alpha_1-D_1)\cdot\ldots\cdot(\alpha_p-D_p)\in L^{d-p}(\mathcal X)
$$
where $D_i\in M^1(\mathcal X)$ are effective $\QQ$-Cartier classes such that $\alpha_i-D_i$ is nef.
\end{Definition}

\begin{Lemma}\label{Lemma71}
Suppose that $\alpha_1,\ldots,\alpha_p,\beta_1,\ldots,\beta_p\in M^1(\mathcal X)$ are big, with $0\le p\le d$.
Then
$$
<\alpha_1\cdot\ldots\cdot\alpha_p>\le <(\alpha_1+\beta_1)+\cdot\ldots\cdot (\alpha_p+\beta_p)>.
$$
\end{Lemma}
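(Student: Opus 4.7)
The plan is to show that every element of the defining set for the left-hand positive intersection product is dominated (in the partial order on $L^{d-p}(\mathcal X)$) by some element of the defining set for the right-hand one. This will force the least upper bounds to satisfy the same inequality.

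Concretely, pick a typical element $(\alpha_1-D_1)\cdot\ldots\cdot(\alpha_p-D_p)$ of the set whose supremum is $<\alpha_1\cdot\ldots\cdot\alpha_p>$, with $D_i$ effective $\QQ$-Cartier and $\alpha_i-D_i$ nef. Choose $Y\in I(X)$ on which all the $\alpha_i$, $\beta_i$, $D_i$ are represented, with each $\beta_i$ represented by a big divisor on $Y$. By the projective big-divisor decomposition (\ref{eq80}), for each $i$ I can write $\beta_i = A_i+E_i$ in $N^1(Y)$ with $A_i$ an ample $\RR$-divisor and $E_i$ an effective $\QQ$-divisor. Set $D_i' := D_i + E_i$: this is again effective $\QQ$-Cartier, and
\[
(\alpha_i+\beta_i) - D_i' \;=\; (\alpha_i-D_i) + A_i
\]
is a sum of a nef class and an ample class, hence nef. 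Therefore the product $(\alpha_1+\beta_1-D_1')\cdot\ldots\cdot(\alpha_p+\beta_p-D_p')$ lies in the set defining $<(\alpha_1+\beta_1)\cdot\ldots\cdot(\alpha_p+\beta_p)>$.

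Now both $\alpha_i-D_i$ and $(\alpha_i-D_i)+A_i$ are nef, and their difference $A_i$ is ample, hence pseudoeffective, so $(\alpha_i+\beta_i)-D_i' \ge \alpha_i-D_i$ in the partial order of $M^1(\mathcal X)$. Applying Proposition \ref{PropNef} factor by factor (the hypothesis that all the relevant classes are nef is exactly what we just verified) yields
\[
(\alpha_1-D_1)\cdot\ldots\cdot(\alpha_p-D_p) \;\le\; (\alpha_1+\beta_1-D_1')\cdot\ldots\cdot(\alpha_p+\beta_p-D_p') \;\le\; <(\alpha_1+\beta_1)\cdot\ldots\cdot(\alpha_p+\beta_p)>,
\]
where the last inequality is by definition of least upper bound. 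Since this holds for every element of the set defining $<\alpha_1\cdot\ldots\cdot\alpha_p>$, that supremum is also bounded above by the right-hand side, which is the desired inequality.

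The only mild subtlety is ensuring the bookkeeping makes sense in $M^1(\mathcal X)$: one must pass to a common $Y\in I(X)$ before invoking (\ref{eq80}), and one must check that $D_i+E_i$ remains effective and $\QQ$-Cartier. Both are routine. The real content is the replacement trick $D_i \rightsquigarrow D_i + E_i$, which allows the ample piece $A_i$ of each $\beta_i$ to be absorbed as a nef addition, so that monotonicity of intersection products of nef classes (Proposition \ref{PropNef}) can be applied directly.
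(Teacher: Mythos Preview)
Your proof is correct and follows essentially the same approach as the paper: decompose each $\beta_i$ on a common $Y$ as ample plus effective $\QQ$-divisor via (\ref{eq80}), absorb the effective part into $D_i$, and then invoke Proposition \ref{PropNef} to compare the resulting nef products. The paper's argument is identical up to notation.
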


\begin{proof} Let $S$ be the set of Proposition \ref{Prop30} defining $<\alpha_1\cdot\ldots\cdot \alpha_p>$ and let $T$ be the set defining 
$$
<(\alpha_1+\beta_1)\cdot\ldots\cdot (\alpha_p+\beta_p)>.
$$
Suppose that 
$$
\sigma=(\alpha_1-D_1)\cdot\ldots\cdot(\alpha_p-D_p)\in S.
$$
Let $Y\in I(X)$ be such that $\alpha_1,\ldots,\alpha_p,D_1,\ldots,D_p,\beta_1,\ldots,\beta_p$ are represented in $M^1(Y)$. For $1\le i\le p$ we have $\beta_i=H_i+E_i$ where $H_i$ is an ample $\RR$-divisor and $E_i$ is an effective $\QQ$-divisor (by (\ref{eq80})). Thus
$$
\tau=((\alpha_1+\beta_1)-(D_1+E_1))\cdot\ldots\cdot((\alpha_p+\beta_p)-(D_p+E_p))\in T
$$
and $\sigma\le \tau$ by Proposition \ref{PropNef}. Thus $<(\alpha_1+\beta_1)\cdot\ldots\cdot(\alpha_p+\beta_p)>$ is an upper bound for $S$.
\end{proof}

\begin{Lemma}\label{Lemma11} Suppose that $\alpha_1,\ldots,\alpha_p\in M^1(\mathcal X)$ are big. Suppose that $Y\in I(X)$, $|*|$ is a norm on $M^1(Y)$ giving the Euclidean topology, and $\epsilon$ is a positive real number. Then there exist $\beta_1,\ldots,\beta_p\in M^1(\mathcal X)$ which are nef and satisfy $\beta_i\le \alpha_i$ for all $i$ such that
$$
|(<\alpha_1\cdot\ldots\cdot \alpha_p>-\beta_1\cdot\ldots\cdot \beta_p)(\mathcal L_1,\ldots, \mathcal L_{d-p})|<\epsilon |\mathcal L_1|\cdots|\mathcal L_{d-p}|
$$
for all $\mathcal L_1,\ldots,\mathcal L_{d-p}\in M^1(Y)$.
\end{Lemma}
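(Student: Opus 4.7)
The plan is to read the inequality off directly from the convergence of the defining net of the positive intersection product, so the argument is essentially topological bookkeeping with no substantive obstacle.

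Set $\gamma := <\alpha_1\cdot\ldots\cdot\alpha_p>$. By Definition \ref{DefPos} and Proposition \ref{Prop30}, $\gamma$ is the least upper bound in $L^{d-p}(\mathcal X)$ of the directed set $S$ consisting of classes $(\alpha_1-D_1)\cdot\ldots\cdot(\alpha_p-D_p)$, where the $D_i$ are effective $\QQ$-Cartier classes such that each $\alpha_i-D_i$ is nef. The final sentence of the proof of Lemma \ref{Lemma9} notes that under these hypotheses the directed net $S$ converges to $\gamma$ in the Hausdorff topological vector space $L^{d-p}(\mathcal X)$.

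Next, I transport this convergence down to $L^{d-p}(Y)$. Since $L^{d-p}(\mathcal X)$ carries the inverse limit topology, the projection $\pi_Y:L^{d-p}(\mathcal X)\to L^{d-p}(Y)$ is continuous, so $\pi_Y(\sigma)\to\pi_Y(\gamma)$ in $L^{d-p}(Y)$ as $\sigma$ runs over $S$. On the finite dimensional space $L^{d-p}(Y)$ the given norm $|*|$ on $M^1(Y)$ induces the multilinear operator norm $||*||$ introduced in Section \ref{SecMore} (the greatest lower bound of constants $c$ with $|A(x_1,\ldots,x_{d-p})|\le c|x_1|\cdots|x_{d-p}|$ for all $x_i\in M^1(Y)$), and this norm generates the Euclidean topology on $L^{d-p}(Y)$. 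Given $\epsilon>0$ I can therefore pick some
$$
\sigma_0 \;=\; (\alpha_1-D_1)\cdot\ldots\cdot(\alpha_p-D_p)\in S
$$
with $||\pi_Y(\gamma-\sigma_0)||<\epsilon$.

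Finally, set $\beta_i := \alpha_i-D_i$. Each $\beta_i$ is nef by construction, while $\alpha_i-\beta_i = D_i$ is an effective $\QQ$-Cartier class, hence pseudoeffective, so $\beta_i\le\alpha_i$ in $M^1(\mathcal X)$. The intersection product of Lemma \ref{Intcont} is compatible with $\pi_Y$, so $\pi_Y(\beta_1\cdot\ldots\cdot\beta_p)=\sigma_0$, and the bound $||\pi_Y(\gamma-\sigma_0)||<\epsilon$ then unwinds, via the definition of $||*||$, into the stated multilinear inequality on $M^1(Y)^{d-p}$. The entire argument reduces to a compatibility check between the strong topology used to define $\gamma$ on $\mathcal X$ and the Euclidean topology on $L^{d-p}(Y)$ pulled back through $\pi_Y$; I do not anticipate any substantive obstacle.
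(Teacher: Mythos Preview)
Your argument is correct and follows essentially the same route as the paper: both identify $\langle\alpha_1\cdots\alpha_p\rangle$ as the limit of the defining net $S$, pull the convergence down to $L^{d-p}(Y)$ via the continuous projection $\pi_Y$, and then read off the inequality from the operator norm $\|\cdot\|$. One small slip: since $\sigma_0\in S\subset L^{d-p}(\mathcal X)$ and $\beta_i=\alpha_i-D_i$, you have $\beta_1\cdots\beta_p=\sigma_0$ directly (not $\pi_Y(\beta_1\cdots\beta_p)=\sigma_0$), but this does not affect the argument.
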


\begin{proof} $<\alpha_1\cdot\ldots\cdot \alpha_p>$ is the limit point in $L^{d-p}(\mathcal X)$ of the net 
$$
S=\{\beta_1\cdot\ldots\cdot\beta_p\in L^{d-p}(\mathcal X)\mid \mbox{ each $\beta_i$ is nef and $D_i=\alpha_i-\beta_i$ is $\QQ$-Cartier}\}.
$$
There exists an open neighborhood $U$ of $\pi_Y(<\alpha_1\cdot\ldots\cdot\alpha_p>)$ in $L^{d-p}(Y)$ such that 
$$
||A-\pi_Y(<\alpha_1\cdot\ldots\cdot\alpha_p>)||<\epsilon\mbox{ for }A\in U
$$
where $||*||$ is the norm on $L^{d-p}(Y)$ defined before Lemma \ref{Lemma60}.
Thus there exists an element $\beta_1\cdot\ldots\cdot\beta_p\in S\cap \pi_Y^{-1}(U)$ since $<\alpha_1\cdot\ldots\cdot\alpha_p>$ is the limit  point of $S$, so $\pi_Y(\beta_1\cdot\ldots\cdot\beta_p)$ has the desired property.
\end{proof}

\begin{Proposition}\label{PropCont} The  map $\mbox{Big}^p(\mathcal X)\rightarrow L^{d-p}(\mathcal X)$ defined by
$$
(\alpha_1,\ldots,\alpha_p)\mapsto <\alpha_1\cdot\ldots\cdot \alpha_p>
$$
 is continuous.
\end{Proposition}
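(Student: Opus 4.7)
The plan is to reduce to finite-dimensional continuity via the universal properties of the direct and inverse limits, then carry out a sandwich argument exploiting monotonicity of the positive intersection product (Lemma \ref{Lemma71}) together with the approximation of Lemma \ref{Lemma11}. The topology on $L^{d-p}(\mathcal X)$ is the inverse limit topology, and that on $\mbox{Big}^p(\mathcal X)$ is the direct limit topology, so by the universal-property lemma stated just before Lemma \ref{Intcont}, continuity of $(\alpha_1,\ldots,\alpha_p)\mapsto <\alpha_1\cdot\ldots\cdot\alpha_p>$ is equivalent to the continuity, for every $Y\in I(X)$, of the finite-dimensional maps
$$
\phi_Y:\mbox{Big}(Y)^p\to L^{d-p}(Y),\qquad \phi_Y(\alpha_1,\ldots,\alpha_p)=\pi_Y<\rho_Y(\alpha_1)\cdot\ldots\cdot\rho_Y(\alpha_p)>,
$$
where compatibility of the $\phi_Y$ under the directed system is automatic from the construction of the positive intersection product and Lemma \ref{Lemma1}. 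Thus fix $Y$ and work in the Euclidean topology on $\mbox{Big}(Y)^p\subset M^1(Y)^p$.

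Fix $\alpha=(\alpha_1,\ldots,\alpha_p)\in \mbox{Big}(Y)^p$ and suppose $\alpha^{(n)}\to\alpha$. Choose an ample class $\omega\in N^1(Y)$. Since the big cone is open in $N^1(Y)$, there exist positive reals $t_n\to 0$ such that, for each $i$ and $n$, both $\alpha_i^{(n)}+t_n\omega-\alpha_i$ and $\alpha_i+t_n\omega-\alpha_i^{(n)}$ are effective $\QQ$-Cartier (and in particular pseudoeffective). Two applications of Lemma \ref{Lemma71} now yield the sandwich
$$
<\alpha_1^{(n)}\cdot\ldots\cdot\alpha_p^{(n)}> \;\le\; <(\alpha_1+t_n\omega)\cdot\ldots\cdot(\alpha_p+t_n\omega)>
$$
and
$$
<\alpha_1\cdot\ldots\cdot\alpha_p> \;\le\; <(\alpha_1^{(n)}+t_n\omega)\cdot\ldots\cdot(\alpha_p^{(n)}+t_n\omega)> \;\le\; <(\alpha_1+2t_n\omega)\cdot\ldots\cdot(\alpha_p+2t_n\omega)>
$$
in $L^{d-p}(\mathcal X)$, hence after applying $\pi_Y$ also in $L^{d-p}(Y)$. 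Continuity of $\phi_Y$ thereby reduces to proving right-continuity at $t=0$ of the one-parameter family $t\mapsto\pi_Y<(\alpha_1+t\omega)\cdot\ldots\cdot(\alpha_p+t\omega)>$.

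By Lemma \ref{Lemma71} this family is monotone nondecreasing in $t\ge 0$, so its right-hand limit $\gamma$ at $0$ exists in $L^{d-p}(Y)$ and satisfies $\gamma\ge \pi_Y<\alpha_1\cdot\ldots\cdot\alpha_p>$. The reverse inequality is the heart of the argument. Given $t>0$ and $\delta>0$, invoke Lemma \ref{Lemma11} to pick nef classes $\tilde\beta_i$ with $\alpha_i+t\omega-\tilde\beta_i$ effective $\QQ$-Cartier such that $\pi_Y(\tilde\beta_1\cdot\ldots\cdot\tilde\beta_p)$ is $\delta$-close in norm to $\pi_Y<(\alpha_1+t\omega)\cdot\ldots\cdot(\alpha_p+t\omega)>$. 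Using that $\alpha_i$ lies in the interior of the big cone (cf.\ (\ref{eq80})) and the compactness from Lemma \ref{Lemma2}, one decomposes $\tilde\beta_i=\beta_i+\eta_i$ with $\beta_i$ nef, $\beta_i\le\alpha_i$ (with $\alpha_i-\beta_i$ effective $\QQ$-Cartier), and $\eta_i$ bounded in the psef partial order by a fixed multiple of $t\omega$. Multilinear expansion of $\prod_i(\beta_i+\eta_i)$ in $L^{d-p}(Y)$ produces a pure term $\prod_i\beta_i$, which is $\le \pi_Y<\alpha_1\cdot\ldots\cdot\alpha_p>$ by the defining sup property of the positive intersection product, together with mixed terms; each mixed term contains at least one $\eta_i$ factor and, by Corollary \ref{Cor11}, has norm of order $O(t)$ with constant depending only on $\omega$ and $(\alpha_i)$. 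Letting $t\to 0^+$ and then $\delta\to 0$ gives $\gamma\le \pi_Y<\alpha_1\cdot\ldots\cdot\alpha_p>$, closing the sandwich. The main obstacle is precisely this last perturbation step: the naive substitution $\tilde\beta_i\mapsto \tilde\beta_i-t\omega$ destroys nefness, so one must use the interior property of $\alpha_i$ in the big cone together with the compactness of $\{\beta\in L^1(\mathcal X):0\le\beta\le\alpha_i\}$ from Lemma \ref{Lemma2} to produce a nef representative $\beta_i\le\alpha_i$ while keeping the residual error $\eta_i$ uniformly controlled by $t\omega$.
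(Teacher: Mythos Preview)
Your reduction to the maps $\phi_Y$ is correct and matches the paper. However, the heart of your argument---the decomposition $\tilde\beta_i=\beta_i+\eta_i$ with $\beta_i$ nef, $\beta_i\le\alpha_i$, and $\eta_i$ controlled by $t\omega$---is not justified, and I do not see how to carry it out. The class $\tilde\beta_i$ lives on an arbitrary $Z\in I(X)$, and there is no evident mechanism to ``round down'' a nef class dominated by $\alpha_i+t\omega$ to one dominated by $\alpha_i$ while keeping the error of order $t$; subtracting $t\omega$ (or any fixed class) can destroy nefness, as you note, and the invocation of Lemma~\ref{Lemma2} is misplaced: that lemma concerns compact sets in $L^p(\mathcal X)$, not in $M^1(\mathcal X)$, and says nothing about producing nef representatives with prescribed bounds. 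You have correctly identified the obstacle but not overcome it. There is also a smaller issue: to apply Lemma~\ref{Lemma71} you need the perturbations $\alpha_i^{(n)}+t_n\omega-\alpha_i$ to be \emph{big}, not effective $\QQ$-Cartier as you write (they will typically not be rational), though this is easily repaired.

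The paper avoids the decomposition entirely by a multiplicative rather than additive perturbation. Since $\lambda\mathcal L_i$ is big for rational $\lambda>0$, openness of the big cone gives $\delta>0$ so that $|\gamma_i|<\delta$ implies $\lambda\mathcal L_i\pm\gamma_i$ is big, i.e.\ $(1-\lambda)\mathcal L_i\le \mathcal L_i+\gamma_i\le (1+\lambda)\mathcal L_i$. Now rational homogeneity of the positive product (immediate from Definition~\ref{DefPos}: scaling each $\alpha_i$ by $\lambda\in\QQ_{>0}$ scales the admissible $D_i$ and the nef classes $\alpha_i-D_i$ by $\lambda$) together with Lemma~\ref{Lemma71} yields directly
\[
(1-\lambda)^p\langle\mathcal L_1\cdots\mathcal L_p\rangle\;\le\;\langle(\mathcal L_1+\gamma_1)\cdots(\mathcal L_p+\gamma_p)\rangle\;\le\;(1+\lambda)^p\langle\mathcal L_1\cdots\mathcal L_p\rangle,
\]
and a norm estimate using compactness of $\{0\le z\le \langle\mathcal L_1\cdots\mathcal L_p\rangle\}$ in $L^{d-p}(Y)$ (Lemma~\ref{Lemma7}) finishes the proof. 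The point is that scaling preserves both nefness and the defining data of the positive product, so no decomposition of approximating nef classes is needed.
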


\begin{proof} Since the topologies on $\mbox{Big}^p(\mathcal X)$ and $L^{d-p}(\mathcal X)$ are respectively the strong and weak topologies,
it suffices to show that for each $Y\in I(X)$, the map 
$$
\mbox{Big}(Y)^p\stackrel{\rho_Y}{\rightarrow} \mbox{Big}^p(\mathcal X)\rightarrow L^{d-p}(\mathcal X) \stackrel{\pi_Y}{\rightarrow} L^{d-p}(Y)
$$
is continuous. Let  $|\,\,\,|$ be a norm on $M^1(Y)$ and $||\,\,\,||$ be the norm on $L^{d-p}(Y)$ defined before Lemma \ref{Lemma60}  giving the Euclidean topologies.

Let $\mathcal L_i\in \mbox{Big}(Y)$ for $1\le i\le p$, and suppose that $\epsilon$ is a positive real number. Let 
$$
\Omega=\{z\in L^{d-p}(Y)|0\le z\le  <\mathcal L_1\cdot\ldots\cdot \mathcal L_p>\}.
$$
$\Omega$ is compact by Lemma \ref{Lemma7}.  Let
$$
u=\max\{||z||\mid z\in \Omega\}.
$$
Choose a rational number $\lambda$ with $0<\lambda<1$ so that
$$
((1+\lambda)^p-(1-\lambda)^p)u<\frac{\epsilon}{2}
$$
and
$$
(1-(1-\lambda)^p)||<\mathcal L_1\cdot\ldots\cdot\mathcal L_p>||<\frac{\epsilon}{2}.
$$
Since $\lambda \mathcal L_i\in \mbox{Big}(Y)$, there exists $\delta>0$ such that if $\gamma_i\in M^1(Y)$ and $|\gamma_i|<\delta$, then 
$\lambda\mathcal L_i\pm \gamma_i\in \mbox{Big}(Y)$ for all $i$. Hence  
$$
(1-\lambda)\mathcal L_i\le \mathcal L_i+\gamma_i\le (1+\lambda)\mathcal L_i,
$$
and thus by Lemma \ref{Lemma71} and Definition \ref{DefPos}, 
$$
(1-\lambda)^p<\mathcal L_1\cdot\ldots\cdot \mathcal L_p>\le <(\mathcal L_1+\gamma_1)\cdot\ldots\cdot (\mathcal L_p+\gamma_p)>
\le (1+\lambda)^p<\mathcal L_1\cdot\ldots\cdot \mathcal L_p>
$$
and
$$
\begin{array}{lll}
((1-\lambda)^p-1)<\mathcal L_1\cdot\ldots\cdot \mathcal L_p>
&\le&<(\mathcal L_1+\gamma_1)\cdot\ldots\cdot (\mathcal L_p+\gamma_p)>-<\mathcal L_1\cdot\ldots\cdot\mathcal L_p>\\
&\le& ((1+\lambda)^p-1)<\mathcal L_1\cdot\ldots\cdot \mathcal L_p>.
\end{array}
$$
Let 
$$
v=<(\mathcal L_1+\gamma_1)\cdot\ldots\cdot (\mathcal L_p+\gamma_p)>-<\mathcal L_1\cdot\ldots\cdot\mathcal L_p>.
$$
$$
v\in ((1-\lambda)^p-1)<\mathcal L_1\cdot\ldots\cdot\mathcal L_p>+(([(1+\lambda)^p-1]+[1-(1-\lambda)^p])\Omega
$$
implies $||v||<\epsilon$ by the triangle inequality.

\end{proof}

\begin{Definition}\label{Def2} Suppose that $\alpha_1,\ldots,\alpha_p\in \mbox{Psef}(\mathcal X)$. Then their positive intersection product
$$
<\alpha_1\cdot\ldots\cdot\alpha_p>\in L^{d-p}(\mathcal X)
$$
is defined as the limit
$$
\lim_{\epsilon\rightarrow 0+}<(\alpha_1+\epsilon\omega)\cdot\ldots\cdot(\alpha_p+\epsilon \omega)>
$$
where $\omega\in M^1(\mathcal X)$ is any big class.
\end{Definition}

\begin{Lemma} Definition \ref{Def2} is well defined.
\end{Lemma}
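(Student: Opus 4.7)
To show that Definition \ref{Def2} is well defined, two things must be verified: (a) that the limit exists in $L^{d-p}(\mathcal X)$ for a fixed big $\omega$, and (b) that the resulting element of $L^{d-p}(\mathcal X)$ does not depend on the choice of big class $\omega$.

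For (a), fix big $\omega\in M^1(\mathcal X)$ and set $f(\epsilon)=<(\alpha_1+\epsilon\omega)\cdot\ldots\cdot(\alpha_p+\epsilon\omega)>$ for $\epsilon\in(0,1]$. Each $\alpha_i+\epsilon\omega$ is big, since $\mbox{Big}$ is the interior of $\mbox{Psef}$ (Theorem 2.2.26 \cite{L}) so that the sum of a psef class and a big class is big; thus $f(\epsilon)$ is defined via Definition \ref{DefPos}. For $0<\epsilon_2<\epsilon_1\le 1$, $(\epsilon_1-\epsilon_2)\omega$ is big and hence psef, so $\alpha_i+\epsilon_1\omega\ge \alpha_i+\epsilon_2\omega$; Lemma \ref{Lemma71} gives $f(\epsilon_1)\ge f(\epsilon_2)$, so the net is monotone decreasing as $\epsilon\to 0+$. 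Each $f(\epsilon)$ lies in $\mbox{Psef}(L^{d-p}(\mathcal X))$ (as a limit, hence element, of the closed psef cone), so $0\le f(\epsilon)\le f(1)$ for all $\epsilon\in(0,1]$; by Lemma \ref{Lemma2} the net lies in a compact set. Now apply Lemma \ref{Lemma9} to $V=L^{d-p}(\mathcal X)$ with the strict closed convex cone $-\mbox{Psef}(L^{d-p}(\mathcal X))$, whose associated partial order reverses $\le$. Under this reversed order the set $\{f(\epsilon)\}$ is directed (any two elements are dominated by $f(\min(\epsilon_1,\epsilon_2))$), so it admits a least upper bound, i.e.\ a greatest lower bound under $\le$, and the proof of Lemma \ref{Lemma9} shows that the net converges to it. Since the directed set $(0,1]$ with reverse order is cofinal with $\epsilon\to 0+$, this is precisely $\lim_{\epsilon\to 0+}f(\epsilon)$.

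For (b), let $\omega,\omega'\in M^1(\mathcal X)$ be big, and write $L_\omega$, $L_{\omega'}$ for the two limits constructed above. Choose $Y\in I(X)$ on which both $\omega$ and $\omega'$ are represented by big divisors. Since $\mbox{Big}(Y)$ is open in $M^1(Y)$, there exists $\delta>0$ such that $\omega-\delta\omega'\in \mbox{Big}(Y)\subset\mbox{Psef}(Y)$, hence $\omega'\le C\omega$ in $M^1(\mathcal X)$ with $C=1/\delta$; by symmetry $\omega\le C'\omega'$ for some $C'>0$. Then $\alpha_i+\epsilon\omega\le\alpha_i+\epsilon C'\omega'$, and Lemma \ref{Lemma71} yields $f_\omega(\epsilon)\le f_{\omega'}(\epsilon C')$. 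Letting $\epsilon\to 0+$ (so $\epsilon C'\to 0+$ as well) and using that $\mbox{Psef}(L^{d-p}(\mathcal X))$ is closed (Lemma \ref{Lemma3}), we conclude $L_\omega\le L_{\omega'}$. The symmetric argument with $C$ gives the reverse inequality, so $L_\omega=L_{\omega'}$.

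The main technical point is the passage to the limit: Lemma \ref{Lemma9} is stated for \emph{directed} sets, so one must observe that switching to the opposite cone $-\mbox{Psef}$ converts our monotone decreasing net into a directed one bounded inside a compact region, at which point convergence and uniqueness of the limit are automatic. Everything else is routine bookkeeping with the monotonicity of the positive intersection product (Lemma \ref{Lemma71}) and the openness of the big cone.
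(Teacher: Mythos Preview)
Your argument is correct. Part (a) is essentially identical to the paper's: the paper negates the elements and applies Lemma \ref{Lemma9} to the set $S_\omega=\{-\langle(\alpha_1+t\omega)\cdots(\alpha_p+t\omega)\rangle\}$, while you keep the elements and switch to the opposite cone $-\mbox{Psef}$; these are the same manoeuvre.

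For part (b) you take a cleaner route than the paper. The paper introduces the auxiliary compact sets $C_\epsilon=\{x:0\le x\le\langle(\alpha_1+\epsilon\omega)\cdots\rangle\}$, proves $\{x:0\le x\le z\}=\bigcap_\epsilon C_\epsilon$, and then argues by contradiction: if $z'\not\le z$ then $z'\notin C_\epsilon$ for some $\epsilon$, yet for small $\delta$ one has $\epsilon\omega-\delta\omega'$ big, forcing $\langle(\alpha_1+\delta\omega')\cdots\rangle\in C_\epsilon$, and closedness of $C_\epsilon$ traps $z'$ there. You instead choose a single constant $C'$ with $C'\omega'-\omega$ big, obtain the uniform inequality $f_\omega(\epsilon)\le f_{\omega'}(C'\epsilon)$ from Lemma \ref{Lemma71}, and pass to the limit using closedness of $\mbox{Psef}(L^{d-p}(\mathcal X))$. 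Both proofs rest on the same comparison (Lemma \ref{Lemma71} plus bigness of a suitable combination of $\omega$ and $\omega'$), but your packaging avoids the contradiction and the intermediate sets, at the cost of nothing.
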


\begin{proof} Suppose that $\omega\in M^1(\mathcal X)$ is big. The set
$$
S_{\omega}=\{-<(\alpha_1+t\omega)\cdot\ldots\cdot(\alpha_p+t\omega)>\mid 0<t\le 1\}
$$
is a directed set under $\le$ by Lemma \ref{Lemma71}, and it is contained in the compact set
$$
-\{x\in L^{d-p}(\mathcal X)\mid 0\le x\le <(\alpha_1+\omega)\cdot\ldots\cdot(\alpha_p+\omega)>,
$$
so it has a least upper bound $y$ in $L^{d-p}(\mathcal X)$ by Lemma \ref{Lemma9}. Setting $z=-y$, we have
$$
z=\lim_{\epsilon\rightarrow 0^+}<(\alpha_1+\epsilon\omega)\cdot\ldots\cdot(\alpha_p+\epsilon\omega)>
$$
is well defined.

We have equality of sets 
$$
E=\{x\in L^{d-p}(\mathcal X)\mid 0\le x\le z\}=\cap_{0<\epsilon\le 1}C_{\epsilon}
$$
where 
$$
C_{\epsilon}=\{x\in L^{d-p}(\mathcal X)\mid 0\le x\le <(\alpha_1+\epsilon\omega)\cdot\ldots\cdot(\alpha_p+\epsilon\omega)>\}.
$$
In fact, $E\subset \cap_{0<\epsilon\le 1}C_{\epsilon}$ since $-z$ is a least upper bound for $S_{\omega}$, and 
$\lambda\in\cap_{0<\epsilon\le 1}C_{\epsilon}$ implies $-\lambda$ is an upper bound for $S_{\omega}$, so
$-z\le-\lambda$ and thus $\lambda\le z$.

Suppose that $\omega'\in M^1(\mathcal X)$ is another big class. Let
$$
z'=\lim_{\epsilon\rightarrow 0^+}<(\alpha_1+\epsilon\omega')\cdot\ldots\cdot(\alpha_p+\epsilon\omega')>.
$$
Suppose that $z\ne z'$. We will derive a contradiction. Then we either have that $z\not\le z'$ or $z'\not\le z$. Without loss of generality, we may assume that $z'\not\le z$. Thus there exists a positive real number $\epsilon$ such that $z'\not\in C_{\epsilon}$.
$C_{\epsilon}$ is closed in $L^{d-p}(\mathcal X)$ since $C_{\epsilon}$ is compact and $L^{d-p}(\mathcal X)$ is Hausdorff.
Let $U$ be an open neighborhood of $z'$ in $L^{d-p}(\mathcal X)$. If $\delta>0$ is sufficiently small, then $\epsilon\omega-\delta\omega'$ is big (by (\ref{eq80})) and there exists such a $\delta$ with 
$$
<(\alpha_1+\delta\omega')\cdot\ldots\cdot(\alpha_p+\delta\omega')>\in U.
$$
$$
<(\alpha_1+\delta\omega')\cdot\ldots\cdot (\alpha_p+\delta\omega')>\le <(\alpha_1+\epsilon\omega)\cdot\ldots\cdot(\alpha_p+\epsilon\omega)>
$$
by Lemma \ref{Lemma71}. Thus $U\cap C_{\epsilon}\ne \emptyset$.  Since this is true for all open neighborhoods $U$ of $z'$, $z'$ is in the closed set $C_{\epsilon}$, giving a contradiction. Thus $z=z'$.
\end{proof}

\begin{Remark} In Example 3.8 \cite{BFJ}, it is shown that the positive intersection product is not continuous up to the boundary of the psef cone in general, even on a nonsingular surface.
\end{Remark}

\begin{Proposition}\label{Prop11} If $\alpha_1,\ldots,\alpha_p\in M^1(\mathcal X)$ are nef, then
$$
<\alpha_1\cdot\ldots\cdot\alpha_p>=\alpha_1\cdot\ldots\cdot\alpha_p.
$$\end{Proposition}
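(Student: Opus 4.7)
The plan is to reduce to the big-and-nef case, and handle that case directly from Definition \ref{DefPos}, then pass to the general nef case by the continuous limit procedure of Definition \ref{Def2}.

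First assume all $\alpha_i$ are big and nef. Let $S$ be the directed set of Proposition \ref{Prop30}. The key observation is that $\alpha_1\cdot\ldots\cdot\alpha_p$ is itself an element of $S$: take each $D_i=0$, which is trivially an effective $\QQ$-Cartier divisor, and by hypothesis $\alpha_i-D_i=\alpha_i$ is nef. Next I claim $\alpha_1\cdot\ldots\cdot\alpha_p$ is an upper bound for $S$. Given any $(\alpha_1-D_1)\cdot\ldots\cdot(\alpha_p-D_p)\in S$, each $D_i$ is effective hence pseudoeffective, so $\alpha_i\ge \alpha_i-D_i$, and both $\alpha_i$ and $\alpha_i-D_i$ are nef. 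Proposition \ref{PropNef} then gives
$$
\alpha_1\cdot\ldots\cdot\alpha_p \ge (\alpha_1-D_1)\cdot\ldots\cdot(\alpha_p-D_p).
$$
Being simultaneously an element of $S$ and an upper bound for $S$, $\alpha_1\cdot\ldots\cdot\alpha_p$ is the least upper bound, which by Definition \ref{DefPos} is exactly $<\alpha_1\cdot\ldots\cdot\alpha_p>$.

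For the general case (the $\alpha_i$ nef but possibly not big), appeal to Definition \ref{Def2}. Choose an ample class $\omega\in M^1(\mathcal X)$ (such a class exists, for example by pulling back an ample divisor from any $Y\in I(X)$); then $\omega$ is big. For every $\epsilon>0$, the class $\alpha_i+\epsilon\omega$ is the sum of a nef and an ample class, hence ample, and in particular big and nef. The big-and-nef case just established applies, so
$$
<(\alpha_1+\epsilon\omega)\cdot\ldots\cdot(\alpha_p+\epsilon\omega)>=(\alpha_1+\epsilon\omega)\cdot\ldots\cdot(\alpha_p+\epsilon\omega).
$$
Letting $\epsilon\to 0^+$, Definition \ref{Def2} identifies the left side with $<\alpha_1\cdot\ldots\cdot\alpha_p>$, while the right side converges to $\alpha_1\cdot\ldots\cdot\alpha_p$ by multilinearity and continuity of the intersection product on $M^1(\mathcal X)^p$ (Lemma \ref{Intcont}). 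The two sides are equal, completing the proof.

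No step is really an obstacle: the only place where one must be slightly careful is in checking the comparison $\alpha_i\ge\alpha_i-D_i$ in the partial order on $M^1(\mathcal X)$ (that is, that effective $\QQ$-Cartier divisors are pseudoeffective) so that Proposition \ref{PropNef} applies. The rest is bookkeeping and continuity.
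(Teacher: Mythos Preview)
Your proof is correct and follows essentially the same line as the paper's. In the nef and big case you spell out what the paper leaves implicit: that taking $D_i=0$ places $\alpha_1\cdot\ldots\cdot\alpha_p$ in $S$, and that Proposition \ref{PropNef} makes it an upper bound for $S$, hence the least upper bound. For the general nef case both you and the paper invoke Definition \ref{Def2} with a nef and big $\omega$ and continuity of the ordinary intersection product (Lemma \ref{Intcont}). One small wording point: after passing to a common $Y\in I(X)$ on which all the $\alpha_i$ are represented, the pullback of your ample $\omega$ is only nef and big, not ample; so ``nef $+$ ample $=$ ample'' is not literally what you are using, but the needed conclusion that $\alpha_i+\epsilon\omega$ is nef and big still holds, which is exactly why the paper phrases it as ``take $\omega$ nef and big.''
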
 

\begin{proof} When the $\alpha_i$ are nef and big we can take $D_i=0$ in  Definition \ref{DefPos}, from which the statement follows.
The general case follows from the big case, continuity of the (usual) intersection product (Lemma \ref{Intcont}),  and Definition \ref{Def2} by taking $\omega$ to be nef and big.
\end{proof}

\begin{Proposition}\label{Prop10} Suppose that $\alpha_1,\ldots,\alpha_p\in \mbox{Big}(\mathcal X)$. Then $<\alpha_1\cdot\ldots\cdot\alpha_p>$ is the least upper bound of all intersection products
$\beta_1\cdot\ldots\cdot \beta_p$ in $L^{d-p}(\mathcal X)$ with $\beta_i$ a nef class such that $\beta_i\le\alpha_i$.
In particular, $<\alpha^p>$ is the least upper bound of $(\beta^p)$ such that $\beta$ is nef and $\beta\le\alpha$.

\end{Proposition}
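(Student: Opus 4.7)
I identify the LUB by verifying both directions of the inequality in the partial order on $L^{d-p}(\mathcal X)$. Write
$$T=\{\beta_1\cdot\ldots\cdot\beta_p : \beta_i\in M^1(\mathcal X)\text{ is nef and }\beta_i\le\alpha_i\},$$
and let $S$ be the set of Proposition \ref{Prop30}, so that $<\alpha_1\cdot\ldots\cdot\alpha_p>=\sup S$ by Definition \ref{DefPos}. Every element of $S$ has the form $(\alpha_1-D_1)\cdot\ldots\cdot(\alpha_p-D_p)$ with $\alpha_i-D_i$ nef and $D_i$ effective $\QQ$-Cartier (hence psef, so $\alpha_i-D_i\le\alpha_i$); thus $S\subseteq T$, and any upper bound of $T$ bounds $\sup S$. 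This gives one inequality: $<\alpha_1\cdot\ldots\cdot\alpha_p>$ is dominated by the LUB of $T$.

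The heart of the proof is the reverse: that $<\alpha_1\cdot\ldots\cdot\alpha_p>$ is itself an upper bound for $T$. Fix nef $\beta_1,\ldots,\beta_p$ with $\beta_i\le\alpha_i$, and pick $Y\in I(X)$ on which all the classes are represented. For $\epsilon>0$, form
$$(1+\epsilon)\alpha_i-\beta_i=(\alpha_i-\beta_i)+\epsilon\alpha_i,$$
the sum of a psef and a big class, hence big. Apply (\ref{eq80}) on $Y$ to decompose $(1+\epsilon)\alpha_i-\beta_i=H_i+E_i$ with $H_i$ an ample $\RR$-divisor and $E_i$ an effective $\QQ$-divisor. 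Setting $\beta_i':=(1+\epsilon)\alpha_i-E_i=\beta_i+H_i$, we see that $\beta_i'$ is ample (nef + ample), $\beta_i'\ge\beta_i$, and $(1+\epsilon)\alpha_i-\beta_i'=E_i$ is effective $\QQ$-Cartier. Proposition \ref{PropNef} gives $\beta_1\cdot\ldots\cdot\beta_p\le\beta_1'\cdot\ldots\cdot\beta_p'$, and because the right-hand side belongs to the family of Definition \ref{DefPos} associated to $((1+\epsilon)\alpha_1,\ldots,(1+\epsilon)\alpha_p)$, it is $\le <(1+\epsilon)\alpha_1\cdot\ldots\cdot(1+\epsilon)\alpha_p>$. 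Letting $\epsilon\to 0^+$, Proposition \ref{PropCont} (continuity of the positive intersection product on $\mbox{Big}^p(\mathcal X)$) sends the right-hand side to $<\alpha_1\cdot\ldots\cdot\alpha_p>$, and closedness of $\mbox{Psef}(L^{d-p}(\mathcal X))$ (Lemma \ref{Lemma3}) delivers the desired $\beta_1\cdot\ldots\cdot\beta_p\le <\alpha_1\cdot\ldots\cdot\alpha_p>$.

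The ``in particular'' case is the specialization $\alpha_1=\cdots=\alpha_p=\alpha$ restricted to diagonal $\beta_1=\cdots=\beta_p=\beta$: the argument above applied verbatim, with a single common $E$ in the decomposition, produces $\beta^p\le <\alpha^p>$ for every nef $\beta\le\alpha$, and the LUB characterization is inherited from the specialization of Step 1.

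\textbf{Main obstacle.} The decisive technical point is the upgrade from psef to big: the fact that $(1+\epsilon)\alpha_i-\beta_i$ is big (not merely psef) is what licenses the ample + effective decomposition (\ref{eq80}), and this uses bigness of $\alpha_i$ in an essential way. Once this perturbation is in place, the argument reduces to a chain of monotonicity inequalities in the nef cone via Proposition \ref{PropNef} and a routine limit argument combining Proposition \ref{PropCont} with closedness of the psef cone.
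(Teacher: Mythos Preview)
Your proof is correct and follows essentially the same strategy as the paper's: both arguments show $S\subseteq T$, then prove $\langle\alpha_1\cdot\ldots\cdot\alpha_p\rangle$ is an upper bound for $T$ by perturbing the $\alpha_i$ slightly, landing in the defining set $S$ for the perturbed classes, and passing to the limit via continuity of the positive intersection product and closedness of the pseudoeffective cone. The only cosmetic difference is the form of the perturbation: you scale multiplicatively to $(1+\epsilon)\alpha_i$ and invoke (\ref{eq80}) directly, whereas the paper adds small positive combinations $\sum_j\epsilon_j^iH_j$ of an ample basis of $N^1(Y)$ and arranges the coefficients so that $\alpha_i-\beta_i+\sum_j\epsilon_j^iH_j$ is an effective $\QQ$-divisor; your one-parameter version is arguably cleaner, but the underlying mechanism is identical.
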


\begin{proof} Let $S$ be the directed set of Proposition \ref{Prop30} and let $T$ be the set
$$
T=\{\beta_1\cdot\ldots\cdot\beta_p\mid \beta_i\in M^1(\mathcal X)\mbox{ are nef and }\beta_i\le \alpha_i\mbox{ for $1\le i\le p$}\}.
$$
Suppose that $\beta_1,\ldots,\beta_p\in M^1(\mathcal X)$ are nef with $\beta_i\le\alpha_i$. Let $Y \in I(X)$ be such that
$\alpha_1,\ldots,\alpha_p,\beta_1,\ldots,\beta_p$ are represented on $Y$. Let $H_1,\ldots,H_s$ be ample, integral divisors on $Y$ which generate $M^1(Y)$ as an $\RR$-vector space. Given a real number $\delta>0$, there exists $0<\epsilon_j^i<\delta$ such that
$$
D_i:=\alpha_i-\beta_i+\epsilon_1^iH_1+\cdots+\epsilon_s^iH_s
$$
is represented in $M^1(Y)$ by an effective and big $\QQ$-divisor. $(\alpha_i+\epsilon_1^iH_1+\cdots+\epsilon_s^iH_s)-D_i=\beta_i$ is nef, so that
$$
\beta_1\cdot\ldots\cdot\beta_p\le
<(\alpha_1+\epsilon_1^1H_1+\cdots+\epsilon_s^1H_s)\cdot\ldots\cdot (\alpha_p+\epsilon_1^pH_1+\cdots+\epsilon_s^pH_s)>
$$
by Definition \ref{DefPos}.
We have a continuous  map
$$
\Lambda:\RR^{ps}\rightarrow (M^1(Y))^p
$$
defined by 
$$
\Lambda(\epsilon_1^1,\ldots,\epsilon_s^1,\ldots, \epsilon_1^p,\ldots,\epsilon_s^p)
=(\alpha_1+\epsilon_1^1H_1+\cdots+\epsilon_s^1H_s,\ldots,\alpha_p+\epsilon_1^pH_1+\cdots+\epsilon_s^pH_s).
$$
$U=\Lambda^{-1}(\mbox{Big}(Y)^p)$ is an open subset of $\RR^{ps}$ containing the origin. 
composing with the continuous map $\mbox{Big}(Y)^p\rightarrow L^{d-p}(Y)$ defined by 
$(z_1,\ldots,z_p)\mapsto \pi_Y(<z_1\cdot\ldots\cdot z_p>)$, we obtain a continuous map $U\rightarrow L^{d-p}(Y)$. Thus
$$
\lim_{\epsilon_i^j\rightarrow 0}\pi_Y(<((\alpha_1+\epsilon_1^1H_1+\cdots+\epsilon_s^1H_s)\cdot\ldots\cdot
(\alpha_p+\epsilon_1^pH_1+\cdots+\epsilon_s^pH_s)>)=\pi_Y(<\alpha_1\cdot\ldots\cdot \alpha_p>).
$$
Since 
$$
\pi_Y(<((\alpha_1+\epsilon_1^1H_1+\cdots+\epsilon_s^1H_s)\cdot\ldots\cdot
(\alpha_p+\epsilon_1^pH_1+\cdots+\epsilon_s^pH_s)>)-\pi_Y(\beta_1\cdot\ldots\cdot\beta_p)
$$
is in the closed subset $\mbox{Psef}(L^{d-p}(Y))$ for all $\epsilon_i^j>0$, we have that
$$
\pi_Y(\beta_1\cdot\ldots\cdot\beta_p)\le\pi_Y(<\alpha_1\cdot\ldots\cdot\alpha_p>).
$$
Thus $\beta_1\cdot\ldots\cdot\beta_p\le <\alpha_1\cdot\ldots\cdot \alpha_p>$.
We have $S\subset T$ and the least upper bound $<\alpha_1\cdot\ldots\cdot\alpha_p>$ of $S$ is an upper bound of $T$. Thus $<\alpha_1\cdot\ldots\cdot \alpha_p>$ is the least upper bound of $T$. 
\end{proof}

\begin{Lemma}\label{LemmaSym} For $\alpha_1,\ldots,\alpha_p\in \mbox{Psef}(\mathcal X)$, the positive intersection product 
$$
(\alpha_1,\ldots,\alpha_p)\mapsto <\alpha_1\cdot\ldots\cdot\alpha_p>\in L^{d-p}(\mathcal X)
$$
is symmetric, homogeneous of degree 1 and super-additive in each variable.
\end{Lemma}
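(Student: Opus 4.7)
The plan is to treat each of the three properties first in the big-class case using the defining directed set $S$ from Proposition \ref{Prop30} and Definition \ref{DefPos}, and then to extend to psef classes via the limit in Definition \ref{Def2}. Symmetry is immediate from the symmetry of the ordinary intersection product (Lemma \ref{Intcont}): simultaneously permuting the $\alpha_i$ and $D_i$ leaves the defining set $S$ invariant, so its least upper bound is symmetric, and the psef case descends through the limit in Definition \ref{Def2}. Homogeneity of degree $1$ in the first variable (and hence in every variable by symmetry), for big $\alpha_1$, comes from the observation that for $t>0$ the bijection $D_1\mapsto tD_1$ identifies the set defining $<(t\alpha_1)\cdot\alpha_2\cdot\ldots\cdot\alpha_p>$ with $t$ times the set defining $<\alpha_1\cdot\alpha_2\cdot\ldots\cdot\alpha_p>$, so their least upper bounds scale accordingly; for psef $\alpha_1$ I would write $t\alpha_1+\epsilon\omega=t(\alpha_1+\tfrac{\epsilon}{t}\omega)$ and apply the big-case identity inside the limit of Definition \ref{Def2}, combined with the invariance of that limit under rescaling the coefficient of $\omega$ slot by slot.

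The heart of the argument is super-additivity. Fix big classes $\alpha_1,\alpha_1',\alpha_2,\ldots,\alpha_p$ in $M^1(\mathcal X)$ and let $S$, $S'$, $S''$ be the directed sets of Proposition \ref{Prop30} defining $<\alpha_1\cdot\alpha_2\cdot\ldots\cdot\alpha_p>$, $<\alpha_1'\cdot\alpha_2\cdot\ldots\cdot\alpha_p>$ and $<(\alpha_1+\alpha_1')\cdot\alpha_2\cdot\ldots\cdot\alpha_p>$ respectively. Given typical elements $x=(\alpha_1-D_1)\cdot\prod_{j\ge 2}(\alpha_j-D_j)\in S$ and $y=(\alpha_1'-D_1')\cdot\prod_{j\ge 2}(\alpha_j-\tilde D_j)\in S'$, I would apply Lemma \ref{Lemma10} to each $\mathcal D(\alpha_j)$ for $j\ge 2$ to produce $D_j''\in\mathcal D(\alpha_j)$ with $D_j''\le D_j$ and $D_j''\le \tilde D_j$. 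By Proposition \ref{PropNef} and multilinearity,
$$
x+y\le (\alpha_1-D_1)\cdot\prod_{j\ge 2}(\alpha_j-D_j'') + (\alpha_1'-D_1')\cdot\prod_{j\ge 2}(\alpha_j-D_j'') = \bigl((\alpha_1+\alpha_1')-(D_1+D_1')\bigr)\cdot\prod_{j\ge 2}(\alpha_j-D_j''),
$$
and the final expression lies in $S''$ because $D_1+D_1'$ is effective $\QQ$-Cartier and $(\alpha_1-D_1)+(\alpha_1'-D_1')$ is nef. Hence $x+y\le \sup S''$. Since $S$ and $S'$ are directed nets converging to their suprema in $L^{d-p}(\mathcal X)$ by Lemma \ref{Lemma9}, continuity of addition together with closedness of $\mbox{Psef}(L^{d-p}(\mathcal X))$ forces $\sup S+\sup S'\le\sup S''$, which is super-additivity in the big case.

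To extend super-additivity to psef $\alpha_i,\alpha_1'$, I would fix a big class $\omega\in M^1(\mathcal X)$ and apply the big-case inequality to $\alpha_1+\tfrac{\epsilon}{2}\omega$, $\alpha_1'+\tfrac{\epsilon}{2}\omega$ and $\alpha_j+\epsilon\omega$ for $j\ge 2$. The left side is of Definition \ref{Def2} form and converges as $\epsilon\to 0^+$ to $<(\alpha_1+\alpha_1')\cdot\alpha_2\cdot\ldots\cdot\alpha_p>$; closing the argument requires showing that each term on the right, a \emph{mixed-coefficient} limit of the shape
$$
\lim_{\epsilon\to 0^+}<(\alpha_1+\tfrac{\epsilon}{2}\omega)\cdot\prod_{j\ge 2}(\alpha_j+\epsilon\omega)>,
$$
coincides with $<\alpha_1\cdot\alpha_2\cdot\ldots\cdot\alpha_p>$. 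This is the step I expect to be the main obstacle, and I would handle it via a sandwich: by Lemma \ref{Lemma71} the mixed approximant at parameter $\epsilon$ lies between the standard Definition \ref{Def2} approximants at parameters $\epsilon$ and $2\epsilon$, both of which converge to $<\alpha_1\cdot\alpha_2\cdot\ldots\cdot\alpha_p>$, so by the monotonicity and compactness arguments already used in proving Definition \ref{Def2} is well defined (together with the uniqueness of limits in the Hausdorff space $L^{d-p}(\mathcal X)$) the mixed limit agrees. Passing to the limit in the perturbed inequality and invoking closedness of $\mbox{Psef}(L^{d-p}(\mathcal X))$ completes super-additivity; the same mixed-coefficient sandwich also closes out homogeneity in the psef case.
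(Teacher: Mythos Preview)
Your proof has a genuine gap in the homogeneity argument. The bijection $D_1\mapsto tD_1$ you propose only works for rational $t$: Definition~\ref{DefPos} requires each $D_i$ to be an effective $\QQ$-Cartier divisor, and for irrational $t>0$ the class $tD_1$ is no longer $\QQ$-Cartier (unless $D_1\equiv 0$), so the map does not carry the defining set for $\alpha_1$ onto that for $t\alpha_1$. Your argument therefore establishes homogeneity only for $t\in\QQ_{>0}$. The paper flags exactly this as the one nontrivial step in the big case and closes it via continuity (Proposition~\ref{PropCont}): the maps $\lambda\mapsto\langle\lambda\alpha_1\cdot\alpha_2\cdot\ldots\cdot\alpha_p\rangle$ and $\lambda\mapsto\lambda\langle\alpha_1\cdot\ldots\cdot\alpha_p\rangle$ are both continuous from $\RR_{>0}$ into the Hausdorff space $L^{d-p}(\mathcal X)$ and agree on $\QQ_{>0}$, hence everywhere. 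An alternative repair, available at this point in the paper, is to invoke Proposition~\ref{Prop10}, which replaces the defining set by $\{\beta_1\cdots\beta_p:\beta_i\ \mbox{nef},\ \beta_i\le\alpha_i\}$ with no rationality constraint; then $\beta_1\mapsto t\beta_1$ really is a bijection for every real $t>0$.

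Your super-additivity argument in the big case is correct and more explicit than the paper, which simply says it follows from the definition. For the psef extension your mixed-coefficient sandwich via Lemma~\ref{Lemma71} is a valid way to finish; the paper takes a slightly more streamlined route, packaging the big-case inequality into a single continuous map $\phi:\RR_{>0}\to L^{d-p}(\mathcal X)$ (continuous by Proposition~\ref{PropCont}) with values in the closed psef cone, and passing to the limit $t\to 0^+$ via Definition~\ref{Def2}.
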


\begin{proof} First suppose that $\alpha_1,\ldots,\alpha_p\in \mbox{Big}(\mathcal X)$. The only part that does not follow directly from the definition of the positive intersection product is the statement on homogeneity for irrational scalars. By symmetry, it suffices to prove homogeneity in the first variable. Suppose that
$\alpha_1,\ldots,\alpha_p\in \mbox{Big}(\mathcal X)$. The map $\phi:\RR_{>0}\rightarrow L^{d-p}(\mathcal X)$ given by
$$
\lambda\mapsto <\lambda\alpha_1\cdot\alpha_2\cdot\ldots\cdot \alpha_p>
$$ 
is continuous by Proposition \ref{PropCont}; in
fact it has a natural factorization by continuous maps
$$
\RR_{>0}\rightarrow \mbox{Big}(Y)^p\stackrel{\rho_Y}{\rightarrow}\mbox{Big}^p(\mathcal X)\rightarrow L^{d-p}(\mathcal X)
$$
if $\alpha_1,\ldots,\alpha_p$ are represented in $\mbox{Big}(Y)$.
Since $L^{d-p}(\mathcal X)$ is a topological vector space, the map $\psi:\RR_{>0}\rightarrow L^{d-p}(\mathcal X)$ defined by
$\lambda\mapsto \lambda<\alpha_1\cdot\ldots\cdot\alpha_p>$ is continuous. Since $\phi$ and $\psi$ agree on the positive rational numbers, and $L^{d-p}(\mathcal X)$ is Hausdorff, we have that $\phi=\psi$.

Symmetry on $\mbox{Psef}(\mathcal X)$ follows from the big case and Definition \ref{Def2}.
It suffices to establish super additivity in the first variable. Suppose that $\alpha_1,\alpha_1',\alpha_2,\ldots\alpha_p\in \mbox{Psef}(\mathcal X)$. There exists $Y\in I(X)$ such that $\alpha_1,\alpha_1',\alpha_2,\ldots\alpha_p$ are represented in $\mbox{Psef}(Y)$. Let $\omega$ be an ample divisor on $Y$. Define
$$
\phi:\RR_{>0}\rightarrow L^{d-p}(\mathcal X)
$$
by
$$
\begin{array}{lll}
\phi(t)&=&<(\alpha_1+t\omega)+(\alpha_1'+t\omega)\cdot (\alpha_2+t\omega)\cdot\ldots\cdot (\alpha_p+t\omega)>\\
&&-<(\alpha_1+t\omega)\cdot(\alpha_2+t\omega)\cdot\ldots\cdot(\alpha_p+t\omega)>\\
&&-<(\alpha_1'+t\omega)\cdot(\alpha_2+t\omega)\cdot\ldots\cdot(\alpha_p+t\omega)>.
\end{array}
$$
$\phi$ is continuous by Proposition \ref{PropCont}, and $\phi(\RR_{>0})$ is contained in the closed set $K$ of pseudoeffective classes.
Taking the limit as $t\rightarrow 0^+$, we have by Definition \ref{Def2} that 
$$
<(\alpha_1+\alpha_1')\cdot\ldots\cdot\alpha_p>-<\alpha_1\cdot\alpha_2\cdot\ldots\cdot\alpha_p>-<\alpha_1.\cdot\alpha_2\cdot\ldots\cdot\alpha_p>\in K.
$$
\end{proof}

\section{Volume}\label{SecVolume}
In this section we continue to assume that $X$ is a complete $d$-dimensional variety over a field $k$.

\begin{Theorem}\label{FApp}(Fujita Approximation)
Suppose that $D$ is a big Cartier divisor on a complete variety $X$  of dimension $d$ over a field $k$, and $\epsilon>0$ is given. Then there exists a projective variety $Y$ with a birational morphism $f:Y\rightarrow X$, a nef and big $\QQ$-divisor $N$ on $Y$, and an effective $\QQ$-divisor $E$ on $Y$ such that there exists $n\in \ZZ_{>0}$ so that $nD$, $nN$ and $nE$ are Cartier divisors with
 $f^*(nD)\sim nN+nE$, where $\sim$ denotes linear equivalence, and
 $$
 {\rm vol}_Y(N)\ge {\rm vol}_X(D)-\epsilon.
 $$
 \end{Theorem}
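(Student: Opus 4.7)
The plan is to combine Chow's lemma with the classical Fujita construction (blow up the base ideal of $|mD|$ and split off the fixed part), and then to extract the volume comparison from Theorem 3.3 of \cite{LM} together with its extension to arbitrary fields in Theorem 7.2 of \cite{C}.

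First, I would reduce to the projective case. By Chow's lemma there is a projective $k$-variety $X'$ and a birational proper morphism $g:X'\to X$. The natural pullback $g^{*}:H^{0}(X,mD)\hookrightarrow H^{0}(X',g^{*}mD)$ is injective because $g$ is surjective, so $\mbox{vol}_{X}(D)\le\mbox{vol}_{X'}(g^{*}D)$. It therefore suffices to produce a projective blow-up $h:Y\to X'$ and a nef and big $\QQ$-divisor $N$ on $Y$ satisfying the conclusion with $\mbox{vol}_{Y}(N)\ge\mbox{vol}_{X'}(g^{*}D)-\varepsilon$; the morphism $f:=g\circ h:Y\to X$ is then birational, $Y$ is projective (as a blow-up of a projective variety), and the stated inequality follows automatically.

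Next, for $m$ large enough that $|m g^{*}D|\ne\emptyset$, let $\mathfrak{b}_{m}\subset\mathcal O_{X'}$ denote the base ideal of the complete linear system $|m g^{*}D|$, and let $h:Y\to X'$ be the blow-up of $X'$ along $\mathfrak{b}_{m}$, so that $\mathfrak{b}_{m}\mathcal O_{Y}=\mathcal O_{Y}(-F)$ for an effective Cartier divisor $F$. Set $M:=h^{*}(m g^{*}D)-F$; by construction $\mathcal O_{Y}(M)$ is globally generated by the pulled-back sections of $|m g^{*}D|$, so $M$ is base-point free and in particular nef. Let $W^{(m)}_{\bullet}$ denote the graded sub-linear series of $\bigoplus_{k}H^{0}(X',km g^{*}D)$ whose degree-$k$ piece $W^{(m)}_{k}$ is the image of the multiplication $S^{k}H^{0}(X',m g^{*}D)\to H^{0}(X',km g^{*}D)$. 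Each section of $m g^{*}D$ lies in $\mathfrak{b}_{m}\cdot\mathcal O_{X'}(m g^{*}D)$, so a $k$-fold product pulled back to $Y$ vanishes along $kF$, giving the crucial inclusion $W^{(m)}_{k}\subseteq H^{0}(Y,kM)$.

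Now I would compare volumes. Using that volume is a limit on the projective variety $Y$ over $k$ (Theorem 7.2 of \cite{C}), one obtains
\[
\mbox{vol}_{Y}(M)\;=\;\lim_{k\to\infty}\frac{\dim_{k}H^{0}(Y,kM)}{k^{d}/d!}\;\ge\; m^{d}\,\mbox{vol}(W^{(m)}_{\bullet}),
\]
where $\mbox{vol}(W^{(m)}_{\bullet})=\lim_{k}\dim_{k}W^{(m)}_{k}/((km)^{d}/d!)$ is the graded-linear-series volume. By Theorem 3.3 of \cite{LM}, as extended to arbitrary ground fields in \cite{C}, $\mbox{vol}(W^{(m)}_{\bullet})\to\mbox{vol}_{X'}(g^{*}D)$ as $m\to\infty$. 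Fix $m$ large enough that $\mbox{vol}(W^{(m)}_{\bullet})\ge\mbox{vol}_{X'}(g^{*}D)-\varepsilon\ge\mbox{vol}_{X}(D)-\varepsilon$, and take $n:=m$, $N:=(1/m)M$, $E:=(1/m)F$. Then $nD$, $nN=M$, and $nE=F$ are Cartier; $f^{*}(nD)=h^{*}g^{*}(mD)=nN+nE$ as Cartier divisors (hence linearly equivalent); $N$ is nef since $M$ is; and $\mbox{vol}_{Y}(N)=m^{-d}\mbox{vol}_{Y}(M)\ge\mbox{vol}(W^{(m)}_{\bullet})\ge\mbox{vol}_{X}(D)-\varepsilon$, which is positive for $\varepsilon$ small enough, so $N$ is big as well.

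The main obstacle is the convergence $\mbox{vol}(W^{(m)}_{\bullet})\to\mbox{vol}_{X'}(g^{*}D)$: over an algebraically closed field this is Theorem 3.3 of \cite{LM}, but over an arbitrary (possibly imperfect) field the base-change-to-$\overline{k}$ argument can fail because $X'\times_{k}\overline{k}$ may not even be generically reduced, as the introduction emphasises; this is exactly the pathology circumvented by Theorem 7.2 of \cite{C}. Everything else in the argument uses only Chow's lemma, the universal property of the blow-up, and the Snapper--Kleiman intersection theory recalled in Subsection \ref{SSInt}, so no resolution of singularities, and no smoothness of $Y$, is required.
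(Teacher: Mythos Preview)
Your argument is correct and is essentially the same as the paper's: reduce to the projective case by a Chow cover, then run the standard Fujita construction (blow up the base ideal of $|mD|$ and take the moving part), with the key volume convergence supplied by Theorem 3.3 of \cite{LM} as extended to arbitrary fields via Theorem 7.2 of \cite{C}. The paper simply abbreviates the blow-up step by saying ``the proof of Remark 3.4 \cite{LM} is valid over an arbitrary field,'' whereas you have written out that construction explicitly; the substance is the same. One cosmetic point: rather than saying ``positive for $\varepsilon$ small enough,'' note that since $g^{*}D$ is big, for $m$ large the sections of $|m g^{*}D|$ define a generically finite map, so $M$ is automatically big and no restriction on $\varepsilon$ is needed.
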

 
 \begin{proof} By taking a Chow cover by a birational morphism, which is an isomorphism in codimension one, we may assume that $X$ is projective over $k$. This theorem was proven over an algebraically closed field of characteristic zero by Fujita \cite{F2} (c.f. Theorem 10.35 \cite{L}). It is proven in Theorem 3.4 and Remark 3.4 \cite{LM} over an arbitrary algebraically closed field (using Okounkov bodies) and by Takagi \cite{Ta} using de Jong's alterations \cite{DJ}. 
 
We give a proof for an arbitrary field.  The conclusions of Theorem 3.3 \cite{LM} over an arbitrary field follow from Theorem 7.2 and formula (45) of \cite{C}, taking  the $L_n$ of Theorem 7.2 \cite{C} to be the $H^0(X,\mathcal O_X(nD))$ of Theorem 3.3 \cite{LM}. $m=1$ in Theorem 7.2 \cite{C} since $D$ is big.  Then the $V_{k,p}$ of Theorem 3.3 \cite{LM} are the $L_{kp}^{[p]}$ of the proof of Theorem 7.2
\cite{C}.

The proof of Remark 3.4 \cite{LM} is valid over an arbitrary field, using the strengthened form of Theorem 3.3 \cite{LM} given above, from which the
approximation theorem follows.
 
 The following theorem is proven in Theorem 3.1 \cite{BFJ} when $k$ is algebraically closed of characteristic zero.
 
 \end{proof}

\begin{Theorem}\label{TheoremVol} Suppose that $X$ is a complete variety over a field $k$ and $\mathcal L$ is a big line bundle on $X$. Then 
$$
{\rm vol}_X(\mathcal L)=<\mathcal L^d>.
$$
\end{Theorem}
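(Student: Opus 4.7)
The plan is to prove two inequalities, using Proposition \ref{Prop10} for one direction and Fujita Approximation (Theorem \ref{FApp}) for the other.

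First, for the inequality $\langle \mathcal{L}^d \rangle \le \mathrm{vol}_X(\mathcal{L})$, I would invoke Proposition \ref{Prop10} to express $\langle \mathcal{L}^d\rangle$ as the least upper bound of $(\beta^d)$ as $\beta$ ranges over nef classes in $M^1(\mathcal{X})$ with $\beta \le \mathcal{L}$. Any such $\beta$ is represented on some $Y \in I(X)$ by a nef $\mathbb{R}$-divisor, and $f^*\mathcal{L} - \beta$ is pseudoeffective on $Y$, where $f : Y \to X$ is the birational morphism. Since the volume is invariant under birational pullback (it is a numerical invariant and $f_*\mathcal{O}_Y = \mathcal{O}_X$ on a Chow cover, or by flat base change as in the proof of Theorem \ref{Volcont}), and since volume is monotone with respect to pseudoeffective differences on a projective variety (Lazarsfeld, extendable to our setting as in Theorem \ref{Volcont}), we obtain $\mathrm{vol}_Y(f^*\mathcal{L}) \ge \mathrm{vol}_Y(\beta)$. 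For a nef class $\beta$ on $Y$ we have $\mathrm{vol}_Y(\beta) = (\beta^d)$ by a standard application of asymptotic Riemann-Roch for nef divisors (Kleiman's theorem, valid over any field via \cite{K}). Hence $(\beta^d) \le \mathrm{vol}_X(\mathcal{L})$, and taking the supremum gives $\langle \mathcal{L}^d\rangle \le \mathrm{vol}_X(\mathcal{L})$.

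For the reverse inequality, fix $\epsilon > 0$ and apply Theorem \ref{FApp} to find a projective $f : Y \to X$, a nef and big $\mathbb{Q}$-divisor $N$ on $Y$, and an effective $\mathbb{Q}$-divisor $E$ on $Y$ with $f^*\mathcal{L} \equiv N + E$ (after multiplying by $n$ and using linear equivalence, which implies numerical equivalence), satisfying $(N^d) = \mathrm{vol}_Y(N) \ge \mathrm{vol}_X(\mathcal{L}) - \epsilon$. The class of $N$ in $M^1(\mathcal{X})$ is nef, and $\mathcal{L} - N$ is represented on $Y$ by the effective divisor $E$, hence is pseudoeffective, so $N \le \mathcal{L}$ in $M^1(\mathcal{X})$. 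By Proposition \ref{Prop10} we conclude
\[
\langle \mathcal{L}^d\rangle \ge (N^d) \ge \mathrm{vol}_X(\mathcal{L}) - \epsilon.
\]
Letting $\epsilon \to 0$ gives $\langle \mathcal{L}^d\rangle \ge \mathrm{vol}_X(\mathcal{L})$.

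The main obstacle is the first direction, specifically verifying monotonicity of volume: one must know that for $\alpha$ big and $\alpha - \beta$ psef (with $\beta$ nef) on a projective $Y$ over an arbitrary field $k$, we have $\mathrm{vol}_Y(\alpha) \ge \mathrm{vol}_Y(\beta) = (\beta^d)$. The equality $\mathrm{vol}_Y(\beta) = (\beta^d)$ for nef $\beta$ follows from Snapper-Kleiman intersection theory plus the fact that higher cohomology grows sub-maximally for nef line bundles, both available over any field from \cite{K}. Monotonicity then reduces, by approximating $\alpha - \beta$ by effective $\mathbb{Q}$-divisors, to the elementary observation that adding an effective divisor to a line bundle can only increase $h^0$, combined with the extension of the volume function to $N^1(Y)$ from Theorem \ref{Volcont}. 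Everything else is then a direct application of the structural results already assembled.
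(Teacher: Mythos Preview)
Your argument is correct and follows the same two-inequality strategy as the paper, with Fujita approximation handling the direction $\mathrm{vol}_X(\mathcal L)\le\langle\mathcal L^d\rangle$ in essentially identical fashion. The difference lies in the other direction. The paper works directly from Definition~\ref{DefPos}: it takes an effective $\QQ$-Cartier divisor $D$ on some $Y\in I(X)$ with $f^*\mathcal L-D$ nef, uses the sheaf inclusion $f^*\mathcal L^{mt}\otimes\mathcal O_Y(-mtD)\hookrightarrow f^*\mathcal L^{mt}$ to compare section counts, and then invokes Fujita's vanishing theorem (not Kleiman's paper) to identify $\mathrm{vol}_Y(f^*\mathcal L-D)$ with $(f^*\mathcal L-D)^d$. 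Because $D$ is genuinely effective, the section comparison is a one-line sheaf-theoretic fact, and no monotonicity of volume under pseudoeffective differences is needed.

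By contrast, you pass through Proposition~\ref{Prop10}, which describes $\langle\mathcal L^d\rangle$ as a supremum over the larger set of nef $\beta$ with $\mathcal L-\beta$ merely pseudoeffective. This forces you to prove the extra lemma that $\mathrm{vol}$ is monotone under psef differences, which you then reduce (correctly) back to the effective case via approximation and continuity of volume. So your route works but introduces an avoidable detour; using the defining set $S$ from Proposition~\ref{Prop30} rather than the enlarged set $T$ of Proposition~\ref{Prop10} would shorten the argument. One small attribution point: the equality $\mathrm{vol}_Y(\beta)=(\beta^d)$ for nef $\beta$ requires the growth bound $h^i(Y,m\beta)=O(m^{d-1})$ for $i\ge 1$, which is Fujita's vanishing theorem (see the paper's citation of Theorem~6.2 of \cite{F}) rather than something found directly in \cite{K}.
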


\begin{proof} Suppose that $Y\in I(X)$ and $D$ is an effective $\QQ$-divisor on $Y$ such that $\mathcal L-D$ is nef and $\mathcal L\ge \mathcal L-D$. There exists $t\in \ZZ_{+}$ such that $tD$ is an effective Cartier divisor. For $m\in \ZZ_+$, we have an inclusion
of $\mathcal O_Y$-modules
$$
f^*(\mathcal L^{mt})\otimes \mathcal O_Y(-mtD)\rightarrow f^*(\mathcal L^{mt}).
$$
Thus
$$
\dim_k\Gamma(Y,f^*(\mathcal L^{mt})\otimes \mathcal O_Y(-mtD))\le \dim_k\Gamma(Y,f^*(\mathcal L^{mt})).
$$
We have a short exact sequence of sheaves of $\mathcal O_X$-modules
$$
0\rightarrow \mathcal O_X\rightarrow f_*\mathcal O_Y
\rightarrow \mathcal G\rightarrow 0
$$
where the support of $\mathcal G$ has dimension less than $d$ since $f$ is birational. Tensoring with $\mathcal L^{mt}$ and taking global sections gives us that
$$
\lim_{m\rightarrow\infty}\frac{\dim_k\Gamma(X,\mathcal L^{mt})}{m^d}=\lim_{m\rightarrow\infty}\frac{\dim_k\Gamma(Y,f^*(\mathcal L^{mt}))}{m^d}.
$$
Thus 
$$
\begin{array}{lll}
\mbox{vol}_X(\mathcal L)&=&\frac{1}{t^d}\mbox{vol}_X(t\mathcal L)\ge \lim_{m\rightarrow\infty}\frac{d!}{t^d}\frac{\dim_k\Gamma(Y,f^*(\mathcal L^{mt})\otimes\mathcal O_Y(-mtD))}{m^d}\\
&=& \frac{\mbox{vol}_Y(f^*(\mathcal L^t)\otimes\mathcal O_Y(-tD))}{t^d}\\
&=& \frac{(f^*(\mathcal L^t)-tD)^d}{t^d}\\
&=&(f^*(\mathcal L)-D)^d,
\end{array}
$$
where the first equality is by (\ref{eq85}) and the third line follows from Fujita's vanishing theorem 
(Theorem 6.2 \cite{F} when $k$ is algebraically closed and $X$ is a proper scheme over $k$. The statement for an arbitrary field follows from flat base change of $X\times_k{\overline k}$, where $\overline k$ is an algebraic closure of $k$ and Proposition III.9.3 \cite{H}.
See also Corollary 1.4.41 and Remark 1.4.36 \cite{L}). Thus $\mbox{vol}_X(\mathcal L)\ge <\mathcal L^d>$ by the definition of the positive intersection product.

Suppose that $\epsilon>0$. There exists a Fujita approximation $Y\rightarrow X$ where $Y$ is a projective $k$-variety, $f$ is a birational morphism and $f^*\mathcal L=A+E$
where $A$ is nef and big and $E$ is an effective $\QQ$-divisor, with $\mbox{vol}_Y(A)>\mbox{vol}_X(\mathcal L)-\epsilon$, by Theorem \ref{FApp}.
We have that
$$
\mbox{vol}_Y(A)=(A^d)\le <\mathcal L^d>
$$
 by Fujita's vanishing theorem. Thus $\mbox{vol}_X(\mathcal L)\le <\mathcal L^d>+\epsilon$.

\end{proof}

\begin{Theorem} The function $\alpha\mapsto <\alpha^d>$ is  continuous on $\mbox{Psef}(M^1(\mathcal X))$ and vanishes
on its boundary, and only there.
\end{Theorem}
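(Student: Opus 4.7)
The plan is to identify $\langle \alpha^d \rangle$ with the volume function $\mathrm{vol}(\alpha)$ on all of $\mathrm{Psef}(M^1(\mathcal{X}))$; once that identification is in hand, both the continuity and the vanishing statements follow from known properties of volume. Concretely, Theorem \ref{TheoremVol} already gives $\langle \mathcal{L}^d \rangle = \mathrm{vol}_X(\mathcal{L})$ for big line bundles, and the extension to arbitrary big real classes $\alpha \in \mathrm{Big}(\mathcal{X})$ is obtained by homogeneity of both sides together with the continuity statements of Proposition \ref{PropCont} and Theorem \ref{Volcont}. For a general $\alpha \in \mathrm{Psef}(M^1(\mathcal{X}))$ I would fix a big auxiliary class $\omega \in M^1(\mathcal{X})$, note that $\alpha + \epsilon\omega \in \mathrm{Big}(\mathcal{X})$ for every $\epsilon > 0$, and combine Definition \ref{Def2} with the big case and continuity of volume to obtain
\[
\langle \alpha^d \rangle \;=\; \lim_{\epsilon \to 0^+} \langle (\alpha + \epsilon\omega)^d \rangle \;=\; \lim_{\epsilon \to 0^+} \mathrm{vol}(\alpha + \epsilon\omega) \;=\; \mathrm{vol}(\alpha).
\]

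With this identification in place, continuity of $\alpha \mapsto \langle \alpha^d \rangle$ on $\mathrm{Psef}(M^1(\mathcal{X}))$ reduces to continuity of $\mathrm{vol}$, which, by the defining direct limit topology on $\mathrm{Psef}^1(\mathcal{X})$, amounts to continuity of $\beta \mapsto \mathrm{vol}_Y(\beta)$ on each $\mathrm{Psef}(Y)$; that is Theorem \ref{Volcont}. For the boundary statement I would use that the topological boundary of $\mathrm{Psef}^1(\mathcal{X})$ in $M^1(\mathcal{X})$ consists exactly of the psef non-big classes: on each projective $Y$ this is Theorem 2.2.26 of \cite{L} as quoted in the preliminaries, and the statement passes through the direct limit. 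Combined with the standard equivalence $\mathrm{vol}(\alpha) > 0 \iff \alpha \in \mathrm{Big}$ (the forward implication by definition after homogenization, the reverse by writing a small ample perturbation of a class of positive volume and using continuity and homogeneity), the identification $\langle \alpha^d \rangle = \mathrm{vol}(\alpha)$ yields simultaneously the vanishing on the boundary and the non-vanishing at every interior (big) point of $\mathrm{Psef}^1(\mathcal{X})$.

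The only substantive step is the first one, extending the identification $\langle \alpha^d \rangle = \mathrm{vol}(\alpha)$ from $\mathrm{Big}(\mathcal{X})$ to all of $\mathrm{Psef}(\mathcal{X})$. It is where the whole apparatus of the preceding sections is invoked — Fujita approximation (Theorem \ref{FApp}) and Theorem \ref{TheoremVol} produce the big-case equality, while Definition \ref{Def2} plus continuity of volume is the lever that pushes it onto the boundary. The continuity and boundary assertions of the present theorem are then formal consequences.
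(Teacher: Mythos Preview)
Your proposal is correct and follows essentially the same approach as the paper. The paper's own proof is a one-line citation of Theorem~\ref{Volcont}, Lemma~\ref{LemmaSym}, Theorem~\ref{TheoremVol}, and Definition~\ref{Def2}; you have simply unpacked how these ingredients combine, additionally invoking Proposition~\ref{PropCont} to pass from big integral classes to big real classes (a step the paper's citation list leaves implicit but which is indeed needed).
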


\begin{proof} This follows from Theorem \ref{Volcont}, Lemma \ref{LemmaSym}, Theorem \ref{TheoremVol} and Definition \ref{Def2}.
\end{proof}

\begin{Proposition}\label{Prop20} Suppose that $A,B\in M^1(\mathcal X)$ are nef. Then
$$
{\rm vol}(A-B)\ge (A^d)-d(A^{d-1}\cdot B).
$$
\end{Proposition}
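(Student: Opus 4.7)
If $A$ is not big, then $\mathrm{vol}_X(A) = (A^d) = 0$ (by the theorem preceding this proposition combined with continuity of volume), while $(A^{d-1}\cdot B) \ge 0$ by Lemma \ref{Lemma61}, so the right-hand side is non-positive and the inequality follows from $\mathrm{vol}(A - B) \ge 0$. So assume $A$ is big. The plan is to apply Fujita approximation to $A$ and then invoke a quantitative Siu-type inequality on a projective birational model of $X$.

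Given $\epsilon > 0$, Theorem \ref{FApp} provides a projective birational morphism $f\colon Y \to X$ and a decomposition $f^*A = N + E$ in $N^1(Y)$ with $N$ nef and big $\mathbb{Q}$-Cartier, $E$ an effective $\mathbb{Q}$-Cartier divisor, and $(N^d) \ge (A^d) - \epsilon$. Because $E$ is effective, multiplication by a section of $\mathcal{O}_Y(kE)$ (for $k$ sufficiently divisible) embeds $H^0(Y, k(N - f^*B))$ into $H^0(Y, kf^*(A - B))$, giving
$$
\mathrm{vol}_X(A - B) = \mathrm{vol}_Y(f^*(A - B)) \ge \mathrm{vol}_Y(N - f^*B).
$$
Since $N$ and $f^*A$ are both nef and $f^*A - N = E$ is effective (hence pseudo-effective), Proposition \ref{PropNef} applied to the $d$-fold product gives $(N^{d-1}\cdot f^*B) \le ((f^*A)^{d-1}\cdot f^*B) = (A^{d-1}\cdot B)$. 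Granted the quantitative Siu inequality $\mathrm{vol}_Y(N - f^*B) \ge (N^d) - d(N^{d-1}\cdot f^*B)$ for the nef big class $N$ and the nef class $f^*B$ on the projective variety $Y$, we deduce $\mathrm{vol}_X(A - B) \ge (A^d) - \epsilon - d(A^{d-1}\cdot B)$, and letting $\epsilon \to 0$ finishes the argument.

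The main obstacle is the quantitative Siu inequality on $Y$. My approach would be induction on the dimension $d$ via asymptotic Riemann-Roch: after a small ample perturbation of $N$ (legitimate by continuity of volume, Theorem \ref{Volcont}) reducing to $N$ an integral ample Cartier divisor, choose $m \gg 0$ with $mN$ very ample and select a prime divisor $D \in |mN|$ of dimension $d-1$ using Theorem \ref{Bertini} (applying the $k(t)$ base-change trick of Theorem \ref{Volcont} should the base field be finite). The exact sequence
$$
0 \to \mathcal{O}_Y(t(N - f^*B) - D) \to \mathcal{O}_Y(t(N - f^*B)) \to \mathcal{O}_D(t(N - f^*B)|_D) \to 0,
$$
combined with Fujita vanishing (Corollary 1.4.41 \cite{L}) to control higher cohomology and the inductive lower bound on $h^0(D, \cdot)$, should produce $h^0(Y, t(N - f^*B)) \ge \tfrac{t^d}{d!}\bigl((N^d) - d(N^{d-1}\cdot f^*B)\bigr) + o(t^d)$, from which the volume estimate follows on division by $t^d/d!$ and passage to the limit in $t$.
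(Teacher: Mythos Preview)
Your argument is considerably more elaborate than the paper's, and the detour through Fujita approximation is unnecessary: since $A$ and $B$ are already nef, they are represented by nef classes on some $Y\in I(X)$, and the paper simply invokes the Siu-type inequality directly there. The paper's proof is just two lines: for $A,B$ nef $\QQ$-Cartier on a projective $Y$ the inequality is Example~2.2.28 of \cite{L} (together with (\ref{eq85})); for $\RR$-divisors one perturbs $A$ and $B$ by small positive combinations of ample generators $H_1,\ldots,H_r$ of $M^1(Y)$ to make them nef $\QQ$-Cartier, applies the $\QQ$-case, and passes to the limit using continuity of volume (Theorem~\ref{Volcont}) and of the intersection product. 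In particular there is no case split on bigness of $A$ and no Fujita approximation.

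Your Fujita step also does not accomplish what you seem to want: it makes the first argument $\QQ$-Cartier, but $f^*B$ is still an $\RR$-class, so the sheaves $\mathcal O_Y(t(N-f^*B))$ in your exact sequence are not well defined. More seriously, the inductive sketch in your last paragraph does not match the standard proof and does not obviously work as written. With $D\in|mN|$ the twisted kernel is $\mathcal O_Y((t-m)N-tf^*B)$, and there is no reason for Fujita vanishing to kill its $h^1$: Fujita vanishing requires twisting a fixed sheaf by a nef class, whereas here the ``negative'' part $-tf^*B$ grows with $t$. The usual route to the Siu inequality (as in the proof of Theorem~2.2.15 in \cite{L}) restricts instead to $m$ general members of $|B|$ (after making $B$ free by an ample perturbation), so that the correction term telescopes to $\approx m\cdot\frac{(N^{d-1}\cdot B)}{(d-1)!}m^{d-1}$; restricting to a single member of $|mN|$ and appealing to induction does not produce the needed coefficient $d(N^{d-1}\cdot B)$. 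If you want to re-prove the inequality rather than cite it, you should follow that argument; but the cleanest fix is simply to cite Example~2.2.28 of \cite{L} for the $\QQ$-Cartier case and handle $\RR$-classes by the ample-perturbation continuity argument, exactly as the paper does.
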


\begin{proof}  $A,B$ are represented by nef elements of $M^1(Y)$ for some $Y\in I(X)$. When $A,B$ are 
$\QQ$-Cartier this is Example 2.2.28 \cite{L} and (\ref{eq85}). Suppose that $A,B$ are $\RR$-divisors. Let $H_1,\ldots,H_r$ be ample classes in $M^1(Y)$
which generate $M^1(Y)$ as a real vector space. Given a positive real number $\epsilon$, there exist $0\le s_i<\epsilon$ and 
$0\le t_i<\epsilon$ for $1\le i\le r$ such that $A+s_1H_1+\cdots+s_rH_r$ and $B+t_1H_1+\cdots+t_rH_r$ are represented by nef $\QQ$-divisors, so that the formula holds for these divisors. Define $\phi:\RR^{2r}\rightarrow \RR$ by
$$
\begin{array}{l}
(s_1,\ldots,s_r,t_1,\ldots,t_r)\mapsto {\rm vol}((A+s_1H_1+\cdots+s_rH_r)-(B+t_1H_1+\cdots+t_rH_r))\\
-(A+s_1H_1+\cdots+s_rH_r)^d+d(A+s_1H_1+\cdots+s_rH_r)^{d-1}\cdot(B+t_1H_1+\cdots+t_rH_r).
\end{array}
$$
$\phi$ is a composition of continuous maps so it is continuous. $Z=\phi^{-1}(\{x\in \RR\mid x\ge 0\})$ is a closed subset of $\RR^{2r}$. 
Thus $0\in Z$ and the formula follows, since any neighborhood of 0 in $\RR^{2r}$ contains points of $Z$.
\end{proof}

\begin{Corollary}\label{Cor21} Suppose that $\beta,\gamma\in M^1(\mathcal X)$ and $\beta$ is nef. If $\omega\in M^1(\mathcal X)$ is a fixed nef and big class such that $\beta\le \omega$ and $\omega\pm\gamma$ are nef, then there exists a positive real number $C$, depending only on $(\omega^d)$, such that
$$
{\rm vol}(\beta+t\gamma)\ge (\beta^d)+dt(\beta^{d-1}\cdot\gamma)-Ct^2
$$
for every $0\le t\le 1$. 
\end{Corollary}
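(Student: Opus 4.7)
The plan is to apply Proposition \ref{Prop20} to the nef classes $A = \beta + t\omega$ and $B = t(\omega - \gamma)$. These are both nef: $A$ because $\beta$ and $\omega$ are nef and $t \ge 0$, and $B$ because $\omega - \gamma$ is nef by hypothesis. Since $A - B = \beta + t\gamma$, Proposition \ref{Prop20} yields
$$
\mbox{vol}(\beta + t\gamma) \ge (A^d) - d(A^{d-1} \cdot B).
$$

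I would then expand the right-hand side as a polynomial in $t$ by multilinearity. The constant term is $(\beta^d)$, and the coefficient of $t$ is
$d(\beta^{d-1} \cdot \omega) - d(\beta^{d-1} \cdot (\omega - \gamma)) = d(\beta^{d-1} \cdot \gamma),$
so the unwanted $(\beta^{d-1} \cdot \omega)$ contributions cancel exactly. What remains is a finite sum $\sum_{k \ge 2} c_k t^k$ in which, up to combinatorial factors depending only on $d$, each $c_k$ is an intersection number of the form $(\beta^a \cdot \omega^b)$ with $a+b = d$ or $(\beta^a \cdot \omega^b \cdot \gamma)$ with $a + b = d - 1$. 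Using $t^k \le t^2$ for $0 \le t \le 1$ and $k \ge 2$, this sum is absorbed into $-Ct^2$ as soon as $\sum_{k \ge 2} |c_k|$ is bounded in terms of $(\omega^d)$.

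Bounding these coefficients is the main (and essentially only) technical step. Since $\beta \le \omega$ and both are nef, iterated use of Proposition \ref{PropNef} gives $(\beta^a \cdot \omega^b) \le (\omega^d)$. For the terms involving a single factor of $\gamma$, Corollary \ref{Cor11} applies, since the other $d-1$ factors are nef and $\omega \pm \gamma$ is nef by hypothesis; it yields
$|(\beta^a \cdot \omega^b \cdot \gamma)| \le C'(\beta^a \cdot \omega^{b+1}) \le C'(\omega^d),$
with $C'$ depending only on $(\omega^d)$. Combining these estimates produces a constant $C$ depending only on $(\omega^d)$ (and on the fixed dimension $d$), giving the required inequality. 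The only cleverness in the argument is the initial choice of $(A,B)$ so that the $(\beta^{d-1}\cdot\omega)$ terms cancel at order $t$; everything else reduces to organized bookkeeping on top of Proposition \ref{Prop20}, Proposition \ref{PropNef}, and Corollary \ref{Cor11}.
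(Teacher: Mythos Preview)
Your proof is correct and uses the same toolbox as the paper (Proposition \ref{Prop20}, Proposition \ref{PropNef}, Corollary \ref{Cor11}), but with a different and somewhat cleaner decomposition. The paper writes $\beta+t\gamma=A-B$ with $A=\beta+t(\gamma+\omega)$ and $B=t\omega$; since $A$ then contains $\gamma$, expanding $(A^d)$ and $(A^{d-1}\cdot B)$ produces terms with several factors of $\gamma$, so the paper proceeds in two steps, first comparing $(A^d)-d(A^{d-1}\cdot B)$ to $(A-B)^d=(\beta+t\gamma)^d$ and then comparing $(\beta+t\gamma)^d$ to $(\beta^d)+dt(\beta^{d-1}\cdot\gamma)$, bounding the error in each step separately.

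Your choice $A=\beta+t\omega$, $B=t(\omega-\gamma)$ avoids this detour: $A$ is $\gamma$-free and $B$ contains a single $\gamma$, so after expanding $(A^d)-d(A^{d-1}\cdot B)$ every higher-order term has at most one factor of $\gamma$, and these are exactly the terms controlled directly by Proposition \ref{PropNef} and Corollary \ref{Cor11}. This makes the bookkeeping shorter, at the cost of requiring the observation that the two $d(\beta^{d-1}\cdot\omega)$ contributions at order $t$ cancel. Either approach yields a constant depending only on $d$ and $(\omega^d)$.
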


\begin{proof} 
The expansion
$$
(\beta^d+t\gamma)^d-[\beta^d+dt(\beta^{d-1}\cdot\gamma)]=\sum_{i=2}^d\binom{d}{i}t^i(\beta^{d-i}\cdot\gamma^i)
$$
has a lower bound on $0\le t\le 1$ in terms of $(\beta^{d-i}\cdot \omega^i)$ for $2\le i\le d$ by Corollary \ref{Cor11},
and thus a lower bound in terms of $(\omega^d)$ by Proposition \ref{PropNef}, since $\beta\le \omega$. Thus there exists a positive constant $C_1$, depending only on $(\omega^d)$, such that
\begin{equation}\label{eq40}
(\beta+t\gamma)^d\ge \beta^d+dt(\beta^{d-1}\cdot \gamma)-C_1t^2.
\end{equation}
Write
$$
\beta+t\gamma=A-B
$$
as the difference of two nef classes $A=\beta+t(\gamma+\omega)$ and $B=t\omega$.
$$
(A-B)^d-[(A^d)-d(A^{d-1}\cdot B)]
$$
has an upper bound  in terms of $(A^{d-1}\cdot B^i)$ for $2\le i\le d$, so it has an upper bound on $0\le t\le 1$ in terms of $(A^{d-1}\cdot \omega^i)t^i$ for $2\le i\le d$, and thus an upper bound in terms of $(\omega^d)$, since $A\le 3\omega$ and by Proposition
\ref{PropNef}, so we have
\begin{equation}\label{eq41}
A^d-d(A^{d-1}\cdot B)\ge (A-B)^d-C_2t^2,
\end{equation}
where $C_2$ is a positive constant which only depends on $(\omega^d)$. The corollary now follows from Proposition \ref{Prop20}, (\ref{eq41}) and (\ref{eq40}).

\end{proof}

Theorem \ref{TheoremA} is proven in Theorem A of \cite{BFJ} when $k$ is algebraically closed of characteristic zero. The fact that volume is continuously differentiable on the big cone is proven by Lazarsfeld and Mustata over an algebraically closed field of any characteristic in Remark 4.2.7 \cite{LM}.

In Example 2.7 \cite{ELMNP1}, it is shown that vol is not twice differentiable on the big cone of the blow up of $\PP^2$ at a rational point.

\begin{Theorem}\label{TheoremA} Suppose that $X$ is a complete $d$-dimensional variety over a field $k$. Then the volume function is $\mathcal C^1$ differentiable on the big cone of $N^1(X)$. If $\alpha\in N^1(X)$ is big and $\gamma\in N^1(X)$ is arbitrary, then
$$
\left. \frac{d}{dt}\right|_{t=0}{\rm vol}(\alpha+t\gamma)=d<\alpha^{d-1}>(\gamma).
$$
\end{Theorem}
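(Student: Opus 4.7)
The plan is to adapt the argument of Theorem A of \cite{BFJ}: establish matching lower and upper bounds on the one-sided difference quotient $\frac{{\rm vol}(\alpha+t\gamma)-{\rm vol}(\alpha)}{t}$ for small $t>0$, both equal to $d\langle\alpha^{d-1}\rangle(\gamma)$. All the ingredients — Theorem \ref{TheoremVol} identifying ${\rm vol}(\alpha)$ with $\langle\alpha^d\rangle$, Corollary \ref{Cor21} controlling the volume of a perturbed nef class, Proposition \ref{Prop10} describing $\langle\alpha^{d-1}\rangle$ as a least upper bound of nef intersection products, and Proposition \ref{PropCont} giving continuity of the positive intersection product on the big cone — have already been established over an arbitrary field in the preceding sections, so the BFJ strategy carries over once it is organized around these tools.

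For the lower bound I would fix once and for all a nef and big class $\omega\in M^1(\mathcal X)$ with $\omega\ge\alpha$ and $\omega\pm\gamma$ nef. For each nef $\beta\in M^1(\mathcal X)$ with $\beta\le\alpha$, monotonicity of ${\rm vol}$ on pseudoeffective classes gives ${\rm vol}(\alpha+t\gamma)\ge{\rm vol}(\beta+t\gamma)$, and Corollary \ref{Cor21} (applied with the fixed $\omega$, so the constant $C$ does not depend on $\beta$) yields
$$
{\rm vol}(\alpha+t\gamma)\ge(\beta^d)+dt(\beta^{d-1}\cdot\gamma)-Ct^2
$$
for $0\le t\le 1$. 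By Theorem \ref{TheoremVol} and Proposition \ref{Prop10}, the directed net $\{\beta^{d-1}\}$ has least upper bound $\langle\alpha^{d-1}\rangle$ in $L^1(\mathcal X)$ and $\sup_\beta(\beta^d)={\rm vol}(\alpha)$. Along a cofinal subnet I can simultaneously realize both suprema, and continuity of evaluation at $\gamma$ on the finite-dimensional $L^1(Y)$ factors of the inverse limit then gives $(\beta^{d-1}\cdot\gamma)\to\langle\alpha^{d-1}\rangle(\gamma)$. Passing to the limit yields the quadratic lower bound
$$
{\rm vol}(\alpha+t\gamma)\ge{\rm vol}(\alpha)+dt\,\langle\alpha^{d-1}\rangle(\gamma)-Ct^2.
$$

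For the matching upper bound I would use the BFJ inversion trick. Since $\alpha+t\gamma$ is big for $t$ small, apply the lower bound just established to $\alpha+t\gamma$ in the direction $-\gamma$:
$$
{\rm vol}(\alpha)={\rm vol}\bigl((\alpha+t\gamma)+t(-\gamma)\bigr)\ge{\rm vol}(\alpha+t\gamma)-dt\,\langle(\alpha+t\gamma)^{d-1}\rangle(\gamma)-C't^2.
$$
Rearranging, dividing by $t$, and letting $t\to 0^+$, continuity of $\alpha\mapsto\langle\alpha^{d-1}\rangle$ on ${\rm Big}(\mathcal X)$ (Proposition \ref{PropCont}) together with continuity of evaluation at $\gamma$ gives $\langle(\alpha+t\gamma)^{d-1}\rangle(\gamma)\to\langle\alpha^{d-1}\rangle(\gamma)$ and hence $\limsup_{t\to 0^+}\frac{{\rm vol}(\alpha+t\gamma)-{\rm vol}(\alpha)}{t}\le d\langle\alpha^{d-1}\rangle(\gamma)$. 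Combined with the lower bound this produces the right derivative; the left derivative follows from the same analysis applied with $\gamma$ replaced by $-\gamma$, together with linearity of $\langle\alpha^{d-1}\rangle$ as a functional on $M^1(\mathcal X)$. The $\mathcal C^1$ conclusion on the finite-dimensional $N^1(X)$ is then immediate from Proposition \ref{PropCont}, which gives continuity of $\alpha\mapsto\langle\alpha^{d-1}\rangle\in L^1(\mathcal X)$ on the big cone.

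The main obstacle is the passage to the limit in the lower bound: one must produce a single directed net of nef $\beta\le\alpha$ along which both the real-valued supremum $(\beta^d)\to{\rm vol}(\alpha)$ and the $L^1(\mathcal X)$-valued supremum $\beta^{d-1}\to\langle\alpha^{d-1}\rangle$ are realized, and then evaluate the latter at a $\gamma$ that in general is not nef. The compactness and directed-limit apparatus assembled in Lemmas \ref{Lemma9}, \ref{Lemma2} and Proposition \ref{Prop30} is precisely what makes this simultaneous realization possible. Fixing $\omega$ before choosing $\beta$ is what guarantees that the error constant $C$ does not drift, so that the $-Ct^2$ term survives the passage to the limit and gives the correct first-order asymptotics.
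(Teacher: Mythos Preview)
Your proposal is correct and follows essentially the same route as the paper's proof. The only cosmetic difference is that the paper cites Lemma~\ref{Lemma11} directly to pass from $(\beta^d)+dt(\beta^{d-1}\cdot\gamma)$ to ${\rm vol}(\alpha)+dt\langle\alpha^{d-1}\rangle(\gamma)$, while you phrase the same step via Proposition~\ref{Prop10}, Theorem~\ref{TheoremVol}, and explicit net convergence; the underlying mechanism (the filtered set $\mathcal D(\alpha)$ from Lemma~\ref{Lemma10} lets a single $\beta$ simultaneously approximate both limits) is identical, and the paper is equally informal about it.
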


\begin{proof} Fix a  nef and sufficiently big class $\omega\in M^1(\mathcal X)$ such that $\alpha\le \omega$ and $\omega\pm \gamma$ are nef.
Suppose that $\beta\le\alpha$ is nef. Then certainly $\beta\le \omega$, so it follows from Corollary \ref{Cor21} that
$$
{\rm vol}(\alpha+t\gamma)\ge{\rm vol}(\beta+t\gamma)\ge (\beta^d)+dt(\beta^{d-1}\cdot\gamma)-Ct^2
$$
for every $0\le t\le 1$ and some constant $C$ which only depends on $(\omega^d)$. By Lemma \ref{Lemma11} we thus have that
\begin{equation}\label{eq12}
{\rm vol}(\alpha+t\gamma)\ge {\rm vol}(\alpha)+dt<\alpha^{d-1}>(\gamma)-Ct^2
\end{equation}
for $0\le t\le 1$, and in fact for $-1\le t\le 1$, since the identity holds with $\gamma$ replaced by $-\gamma$.
There exists a possibly larger constant $C'$, depending only on $(\omega^d)$, such that (\ref{eq12}) holds for all big $\tilde\alpha$
such that $\tilde\alpha\le 2\omega$ and $-1\le t\le 1$; that is,
$$
{\rm vol}(\tilde \alpha-t\gamma)\ge{\rm vol}(\tilde\alpha)-dt<\tilde\alpha^{d-1}>(\gamma)-C't^2
$$
for $-1\le t\le 1$.   Since $\alpha=(\alpha+t\gamma)-t\gamma$ with $\alpha+t\gamma\le 2\omega$, we have that
\begin{equation}\label{eq13}
{\rm vol}(\alpha)\ge{\rm vol}(\alpha+t\gamma)-dt<(\alpha +t\gamma)^{d-1}>(\gamma)-C't^2
\end{equation}
for $-1\le t\le 1$. By (\ref{eq80}), there exists $0<\lambda<1$ such that $\alpha+t\gamma$ is big for $-\lambda<t<\lambda$.
The map $t\mapsto <(\alpha+t\gamma)^{d-1}>(\gamma)$ is a continuous map from the interval $(-\lambda,\lambda)$ in $\RR$ to $\RR$, as it can be factored by  continuous maps
$$
(-\lambda,\lambda)\rightarrow \mbox{Big}(X)^{d-1}\stackrel{\rho_X}{\rightarrow} \mbox{Big}^{d-1}(\mathcal X)\stackrel{<*\cdot,\ldots,\cdot*>}{\rightarrow}
L^1(\mathcal X)\stackrel{\pi_X}{\rightarrow} L^1(X)\stackrel{ \gamma}{\rightarrow}\RR.
$$
We thus have that
\begin{equation}\label{eq14}
\lim_{t\rightarrow 0}<(\alpha+t\gamma)^{d-1}>(\gamma)=<\alpha^{d-1}>(\gamma).
\end{equation}
From (\ref{eq12}), (\ref{eq13}) and (\ref{eq14}) we obtain the limit of the conclusions of the theorem.

For fixed $\gamma$, the map $<\alpha^{d-1}>(\gamma):\mbox{Big}(X)\rightarrow \RR$ is a composition of continuous  maps
 so ${\rm vol}$ is $\mathcal C^1$ on $\mbox{Big}(X)$.

\end{proof}

\section{Inequalities}\label{SecIneq}

In this section we suppose that $X$ is a complete $d$-dimensional variety over a field $k$.

\begin{Theorem}\label{KT1} Suppose that $\alpha_1,\ldots,\alpha_d\in N^1(X)$ are nef. Then for every $1\le p\le d$, we have
\begin{equation}\label{eq102}
(\alpha_1\cdot\ldots\cdot \alpha_d)\ge (\alpha_1^p\cdot\alpha_{p+1}\cdot\ldots\cdot\alpha_d)^{\frac{1}{p}}\cdots
(\alpha_p^p\cdot\alpha_{p+1}\cdot\ldots\cdot\alpha_d)^{\frac{1}{p}}.
\end{equation}
In particular,
\begin{equation}\label{eq103}
(\alpha_1\cdot\ldots\cdot\alpha_d)\ge (\alpha_1^d)^{\frac{1}{d}}\cdots(\alpha_d^d)^{\frac{1}{d}}.
\end{equation}
\end{Theorem}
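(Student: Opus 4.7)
The plan is to reduce (\ref{eq102}) for general $p$ to the special case $p=2$ by a log-concavity argument, and then to reduce the $p=2$ case to the classical Hodge Index Theorem on an integral projective surface obtained by iterated Bertini cutting.

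First, I would show that (\ref{eq102}) for all $p \le d$ follows from the $p=2$ case
\[
(\gamma_1 \cdot \gamma_2 \cdot \delta_1 \cdots \delta_{d-2})^2 \ge (\gamma_1^2 \cdot \delta_1 \cdots \delta_{d-2})(\gamma_2^2 \cdot \delta_1 \cdots \delta_{d-2})
\]
applied to all $d$-tuples $\gamma_1, \gamma_2, \delta_1, \ldots, \delta_{d-2}$ of nef classes. The argument (see, e.g., Theorem~1.6.1 of \cite{L}) is that (\ref{eq102}) is equivalent to the log-concavity on $\mathrm{Nef}(X)^p$ of the multihomogeneous polynomial $(\alpha_1, \ldots, \alpha_p) \mapsto (\alpha_1 \cdots \alpha_p \cdot \alpha_{p+1} \cdots \alpha_d)$, and log-concavity of such a form is implied by its pairwise log-concavity in each pair of arguments (with the others held fixed), which is precisely the $p=2$ case. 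The special form (\ref{eq103}) is then the particular choice of $\alpha_i$ for $1\le i\le p$ in (\ref{eq102}) with $p=d$.

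To establish the $p=2$ case, I would first reduce to the case where $X$ is projective by Chow's lemma, using (\ref{eq50}) to transfer intersection numbers through the birational morphism. By continuity of the intersection product (Lemma \ref{Intcont}) together with the density of $\mathrm{Amp}(X)$ in $\mathrm{Nef}(X)$, and after rescaling, it suffices to treat $\gamma_1, \gamma_2, \delta_1, \ldots, \delta_{d-2}$ that are all very ample integral divisors. When $k$ is infinite, I would iteratively apply Theorem \ref{Bertini} to cut by generic members of $|\delta_1|, \ldots, |\delta_{d-2}|$, producing a chain of integral subvarieties $X = X_0 \supset X_1 \supset \cdots \supset X_{d-2} = S$, where $S$ is an integral projective surface over $k$. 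By the restriction formulas (Proposition I.2.5 of \cite{K}) together with (\ref{eq50}), the intersection numbers in the $p=2$ inequality equal the corresponding intersection numbers of $\gamma_1|_S, \gamma_2|_S$ on $S$, and the statement becomes $(\gamma_1|_S \cdot \gamma_2|_S)^2 \ge (\gamma_1|_S)^2 (\gamma_2|_S)^2$, which is the Hodge Index Theorem on the integral projective surface $S$. When $k$ is finite, I would first base-change to $X' = X \times_k k(t)$ over the infinite field $k(t)$, exactly as in the proof of Theorem \ref{Volcont}, and verify via flat base change that intersection numbers are preserved.

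The main obstacle is the Bertini reduction: one must know that in positive characteristic, the generic member of a very ample linear system on an integral projective variety of dimension at least two over an infinite field remains integral. This is a classical theorem (see, e.g., \cite{FOV}). The Hodge Index Theorem on an integral projective surface over an arbitrary field is a consequence of Riemann--Roch and does not require smoothness, so it applies directly to our $S$.
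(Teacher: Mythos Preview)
Your outline follows the same route as the paper's proof: reduce to the $p=2$ case (Lazarsfeld's Theorem~1.6.1), pass to a projective model, arrange for the ground field to be infinite by base-changing to $k(t)$, approximate nef by ample, and then cut down by Bertini members to a surface where the inequality becomes the Hodge Index Theorem. So the architecture is correct and matches the paper.

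The one place where you diverge from the paper, and where there is a genuine gap, is the very last step. You assert that ``the Hodge Index Theorem on an integral projective surface over an arbitrary field is a consequence of Riemann--Roch and does not require smoothness.'' The standard Riemann--Roch proof (e.g.\ Hartshorne V.1.9) uses the formula $\chi(\mathcal O_S(nD))=\chi(\mathcal O_S)+\tfrac12(n^2D^2-nK\cdot D)$ together with Serre duality $h^2(nD)=h^0(K-nD)$; both ingredients require $S$ to be smooth (or at least Gorenstein and Cohen--Macaulay), which a generic Bertini section over an arbitrary infinite field need not be. The paper handles this by first resolving the singularities of the surface (citing \cite{Li} and \cite{CJS}, which give resolution of surfaces over arbitrary excellent bases) and then applying the Hodge Index Theorem on the smooth model; intersection numbers of pulled-back Cartier divisors are preserved by (\ref{eq50}), so the inequality descends. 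You should insert this resolution step rather than claiming Hodge Index holds directly on the singular $S$.
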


\begin{proof}
This is proved over an algebraically closed field of characteristic zero in Variant 1.6.2 of \cite{L} (which is true in positive characteristic by Remark 1.6.5 \cite{L}). Lazarsfeld refers to Beltrametti and Sommese \cite{BS} and Ein and Fulton \cite{F2} for the idea of the proof.

The proof generalizes with small modification to an arbitrary field. A part that requires a little care over a nonclosed field is the proof of the ``Generalized inequality of Hodge type'' (Theorem 1.6.1 \cite{L}). We may assume that $k$ is an infinite field, by making a base change by a rational function field $k(t)$ if necessary.
As in the proof in \cite{L}, we reduce to the case where $\delta_1,\ldots,\delta_d$ are ample on a projective variety, and establish the
theorem by induction on $d$. For the case $d=2$, we resolve the singularities of $X$ (\cite{Li} or \cite{CJS}) and then apply the Hodge index theorem (Theorem  1.9 \cite{H} or Theorem B.27 \cite{K2}). Theorem 1.9 \cite{H} is proven with the assumption that $k$ is algebraically closed, but the proof is valid over an arbitrary field, using Lemma B.28 \cite{K2} instead of Corollary 1.8 \cite{H}.
The assumption that $S$ is geometrically irreducible in the statement of Theorem B.27 \cite{K2} is not necessary. The proof is valid without extending the ground field to the algebraic closure.

Finally, to reach the equality (1.24) of \cite{L}, and to acheive variant 1.6.2 \cite{L}, we must invoke a Bertini theorem which is valid over an infinite field, Theorem \ref{Bertini}.

\end{proof}

\begin{Remark} The conclusions of Theorem \ref{KT1} are true for nef line bundles on an irreducible (but possibly not reduced) proper scheme $X$ over a field $k$ (as is proven in \cite{L} when $k$ is algebraically closed). There exists $r\in \ZZ_{>0}$ such that $X=rX_{\rm red}$ (as a cycle). 
$$
(\alpha_1\cdot\ldots\cdot\alpha_d)=(\alpha_1\cdot\ldots\cdot\alpha_d\cdot X)=r(\alpha_1\cdot\ldots\cdot\alpha_d\cdot X_{\rm red})
$$
so the inequality holds from the integral case (and the fact that $(r^{\frac{1}{p}})^{p}=r$).

However, the inequality in Theorem \ref{KT1} fails if $X$ is not integral, even for ample divisors. The following is a simple example.
Let $X$ be the disjoint union of $X_1$ and $X_2$ where each $X_i$ is isomorphic to $\PP^2$. Define line bundles $\alpha_1$ and $\alpha_2$ on $X$ by 
$$
\alpha_1|X_1=\mathcal O_{X_1}(n)\mbox{ and }\alpha_1|X_2=\mathcal O_{X_2}(1),
$$
$$
\alpha_2|X_1=\mathcal O_{X_1}(1)\mbox{ and }\alpha_2|X_2=\mathcal O_{X_2}(n).
$$
$\alpha_1$ and $\alpha_2$ are both ample on $X$. Since $(\alpha_1\cdot\alpha_2)=2n$ and $(\alpha_1^2)=(\alpha_2^2)=n^2+1$,
the inequality fails for $n\ge 2$.

Using the method of Example 5.5 \cite{C1}, we can find a connected (but not integral) example where the inequality fails, essentially by
joining $X_1$ and $X_2$ at a point.
\end{Remark}

As a corollary, we obtain (Teissier, \cite{T}, \cite{T2}, and  Example 1.6.4 \cite{L})

\begin{Corollary}\label{KT2}(Khovanskii  Teissier inequalities)
Suppose that $\alpha,\beta\in N^1(X)$ are nef on $X$. Let $s_i=(\alpha^i\cdot\beta^{d-i})$. 
Then 
\begin{equation}\label{eq100}
s_i^2\ge s_{i-1}s_{i+1}
\end{equation}
for $1\le i\le d-1$,
\begin{equation}\label{eq101}
s_i^d\ge s_0^{d-i}s_d^i
\end{equation}
for $0\le i\le d$, and 
\begin{equation}\label{eq104}
((\alpha+\beta)^d)^{\frac{1}{d}}\ge (\alpha^d)^{\frac{1}{d}}+(\beta^d)^{\frac{1}{d}}.
\end{equation}
\end{Corollary}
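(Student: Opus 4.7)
The plan is to deduce all three inequalities as direct consequences of Theorem \ref{KT1}, specialized by suitable choices of the nef classes $\alpha_1,\ldots,\alpha_d$. Nonnegativity of each $s_i$ is ensured by Lemma \ref{Lemma60} (applied to $\sigma_X$), so fractional powers and square roots will be unambiguously defined throughout.

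For (\ref{eq100}), I apply (\ref{eq102}) with $p=2$, setting $\alpha_1=\alpha$, $\alpha_2=\beta$, and letting $\alpha_3,\ldots,\alpha_d$ consist of $i-1$ copies of $\alpha$ followed by $d-i-1$ copies of $\beta$. The left-hand side of (\ref{eq102}) evaluates to $s_i$, while the two factors on the right become $(\alpha^{i+1}\cdot\beta^{d-i-1})^{1/2}=s_{i+1}^{1/2}$ and $(\alpha^{i-1}\cdot\beta^{d-i+1})^{1/2}=s_{i-1}^{1/2}$; squaring yields (\ref{eq100}). For (\ref{eq101}), I apply the uniform form (\ref{eq103}) with the first $i$ slots filled by $\alpha$ and the remaining $d-i$ by $\beta$: the left-hand side is $s_i$ and the right-hand side is $(\alpha^d)^{i/d}(\beta^d)^{(d-i)/d}=s_d^{i/d}s_0^{(d-i)/d}$, so raising to the $d$-th power gives the stated inequality (which also holds trivially in the degenerate cases $s_0=0$ or $s_d=0$).

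For the superadditivity statement (\ref{eq104}), I expand by multilinearity of the intersection product,
$$((\alpha+\beta)^d)=\sum_{i=0}^d \binom{d}{i}s_i,$$
apply the bound $s_i\ge s_0^{(d-i)/d}s_d^{i/d}$ from (\ref{eq101}) termwise, and recognize the resulting sum as the binomial expansion of $\bigl(s_0^{1/d}+s_d^{1/d}\bigr)^d$. Extracting $d$-th roots, together with $s_0=(\beta^d)$ and $s_d=(\alpha^d)$, yields (\ref{eq104}). There is no real obstacle at this stage: the argument is combinatorial bookkeeping once Theorem \ref{KT1} is available, since the substantive work (the Bertini argument over infinite fields, the two-dimensional Hodge index theorem over an arbitrary field, and the inductive step on dimension) has already been absorbed into the proof of the generalized Hodge-type inequality.
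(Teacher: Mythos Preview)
Your proof is correct and follows essentially the same approach as the paper: apply (\ref{eq102}) with $p=2$ and the indicated choice of $\alpha_j$'s to obtain (\ref{eq100}), apply (\ref{eq103}) with $i$ copies of $\alpha$ and $d-i$ copies of $\beta$ to obtain (\ref{eq101}), and then expand $((\alpha+\beta)^d)$ binomially and bound each $s_i$ using (\ref{eq101}) to obtain (\ref{eq104}). Your additional remarks on nonnegativity of the $s_i$ and the degenerate cases are welcome clarifications but do not alter the route.
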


\begin{proof} 
to obtain (\ref{eq100}, Apply (\ref{eq102}) with $p=2$, $\alpha_1=\alpha$, $\alpha_2=\beta$, $\alpha_j=\alpha$ for $3\le j\le i+1$ 
and $\alpha_j=\beta$ for $i+2\le j\le d$. To obtain (\ref{eq101}), apply (\ref{eq103}) with 
$$
\alpha_1=\cdots=\alpha_i=\alpha\mbox{ and }\alpha_{i+1}=\cdots=\alpha_d=\beta.
$$
Finally, to obtain (\ref{eq104}), expand
\begin{equation}\label{eq105}
((\alpha+\beta)^d)=\sum_{i=0}^d\binom{d}{i}s_i\ge \sum_{i=0}^d\binom{d}{i}(\alpha^d)^{\frac{i}{d}}(\beta^d)^{\frac{d-i}{d}}
=((\alpha^d)^{\frac{1}{d}}+(\beta^d)^{\frac{1}{d}})^d.
\end{equation}

\end{proof}

The corollary tells us that the sequence $\log s_0, \log s_1,\ldots, \log s_d$ is  concave; that is,
$$
\log s_i\ge \frac{1}{2}(\log s_{i-1}+\log s_{i+1})
$$
for $1\le i\le d-1$.

The sequence $\log s_0,\ldots,\log s_d$ is affine if there exist constants $a$ and $b$ such that
$$
\log s_i=ai+b
$$
for $0\le i\le d$. This condition holds if and only if
$$
\log s_i= \frac{1}{2}(\log s_{i-1}+\log s_{i+1})
$$
for $1\le i\le d-1$.

\begin{Lemma}\label{Lemma62} Suppose that $\mathcal L_1,\ldots,\mathcal L_d\in N^1(X)$ are nef and big. Then
$(\mathcal L_1\cdot\ldots\cdot\mathcal L_d)>0$. 
\end{Lemma}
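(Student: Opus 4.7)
My plan is to deduce the lemma from the Khovanski--Teissier inequality proved just above, reducing it to the diagonal positivity statement $(\mathcal L^d)>0$ for a single nef and big class.

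First, since all of the $\mathcal L_i$ are nef, I would apply inequality (\ref{eq103}) of Corollary \ref{KT2} to obtain
$$
(\mathcal L_1\cdot\ldots\cdot\mathcal L_d)\ge (\mathcal L_1^d)^{1/d}\cdots(\mathcal L_d^d)^{1/d}.
$$
So it suffices to verify $(\mathcal L_i^d)>0$ for each $i$. Fix such a nef and big class $\mathcal L=\mathcal L_i$. Because $\mathcal L$ is nef, Proposition \ref{Prop11} gives $\langle\mathcal L^d\rangle=\mathcal L\cdot\ldots\cdot\mathcal L=(\mathcal L^d)$. Because $\mathcal L$ is big, Theorem \ref{TheoremVol} identifies $\langle\mathcal L^d\rangle$ with $\mathrm{vol}_X(\mathcal L)$, and bigness forces $\mathrm{vol}_X(\mathcal L)>0$. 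Chaining these equalities yields $(\mathcal L^d)=\mathrm{vol}_X(\mathcal L)>0$, and then the Khovanski--Teissier bound gives the lemma.

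The only genuinely delicate point is the passage between line bundles and $\mathbb R$-divisor classes, since Theorem \ref{TheoremVol} is stated for line bundles: for a rational class one scales to an integral representative and uses the homogeneity (\ref{eq85}), and for a general class $\mathcal L\in N^1(X)$ one invokes the continuity of $\mathrm{vol}_X$ on $N^1(X)$ (Theorem \ref{Volcont}) and the (multilinear) continuity of the intersection product to extend the identity $(\mathcal L^d)=\mathrm{vol}_X(\mathcal L)$ from the nef big $\mathbb Q$-classes to the nef big $\mathbb R$-classes; positivity of volume on the big cone then comes from the definition of $\mathrm{Big}(X)$ via Lazarsfeld's continuous extension. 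None of this is expected to be a substantive obstacle — the machinery of Sections \ref{SecMore}--\ref{SecVolume} is designed precisely to make such limiting arguments routine — and the substantive content of the lemma is entirely concentrated in the Khovanski--Teissier inequality (\ref{eq103}).
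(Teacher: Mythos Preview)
Your argument is correct, but the paper takes a more elementary route that avoids both the Khovanskii--Teissier inequality and the volume machinery. The paper writes each big $\mathcal L_i$ as $\epsilon H + E_i$ with $H$ very ample and $E_i$ effective (invoking (\ref{eq80}) after passing to a projective model), and then peels off the effective pieces one variable at a time: since an intersection of one effective class with $d-1$ nef classes is nonnegative, replacing each $\mathcal L_i$ by $\epsilon H$ can only decrease the product, giving $(\mathcal L_1\cdots\mathcal L_d)\ge\epsilon^d(H^d)>0$. Your approach is slicker to state and perfectly legitimate here --- Theorem \ref{KT1} and Theorem \ref{TheoremVol} are both established earlier in the paper, so there is no circularity --- but it imports Fujita approximation through Theorem \ref{TheoremVol}, whereas the paper's proof uses only the elementary positivity of nef intersection numbers on subvarieties together with the decomposition big $=$ ample $+$ effective. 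One minor correction: inequality (\ref{eq103}) is stated in Theorem \ref{KT1}, not Corollary \ref{KT2}.
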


\begin{proof} Since $\mbox{Nef}(X)$ is the closure of $\mbox{Amp}(X)$, we have that $(\mathcal M_1\cdot\ldots\cdot\mathcal M_p\cdot V)\ge 0$
whenever $\mathcal M_1,\ldots,\mathcal M_p$ are nef and $V$ is a closed $p$-dimensional subvariety of $X$. 

Let $H$ be a very ample line bundle on $X$. Since the $\mathcal L_i$ are big, there are $\epsilon>0$ and effective classes $E_i$
such that $\mathcal L_i=\epsilon H+E_i$ for $1\le i\le d$. We then compute using the multilinearity of the intersection product and Propositions I.2.4 and I.2.5 \cite{K} that
$$
(\mathcal L_1\cdot\ldots\cdot\mathcal L_d)\ge \epsilon^d(H^d)>0.
$$
\end{proof}

We remark that if $\alpha\in N^1(X)$ is nef, then $\alpha$ is big if and only if $(\alpha^d)>0$.

\begin{Proposition}\label{Prop63} Suppose that $ \alpha,\beta\in N^1(X)$ are nef with $(\alpha^d)>0, (\beta^d)>0$. Then  the following are equivalent.
\begin{enumerate}
\item[1)] $s_i^2=s_{i-1}s_{i+1}$ for $1\le i\le d$
\item[2)] $s_i^d=s_0^{d-i}s_d^i$ for $0\le i\le d$
\item[3)] $s_{d-1}^d=s_0s_d^{d-1}$ 
\item[4)] $(\alpha+\beta)^d)^{\frac{1}{d}}=(\alpha^d)^{\frac{1}{d}}+(\beta^d)^{\frac{1}{d}}$.
\end{enumerate}
\end{Proposition}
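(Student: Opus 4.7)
The plan is to leverage the log-concavity of $\log s_i$ (which is Corollary \ref{KT2}) together with the observation that each condition is an equality version of a corresponding KT inequality. First, the hypotheses $(\alpha^d)>0$ and $(\beta^d)>0$ with $\alpha,\beta$ nef, combined with Lemma \ref{Lemma62} applied to the mixed products $\alpha^i\cdot\beta^{d-i}$, give $s_i>0$ for $0\le i\le d$, so we may freely take logarithms. I will then prove $1)\Leftrightarrow 2)$, $2)\Leftrightarrow 4)$ by elementary rearrangement, the implication $2)\Rightarrow 3)$ trivially, and close the cycle with the key implication $3)\Rightarrow 1)$.

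For $1)\Leftrightarrow 2)$: condition $1)$ says the differences $\log s_i-\log s_{i-1}$ are constant in $i$, i.e., $\log s_i$ is affine in $i$. Solving for the constants using the endpoint values forces $\log s_i=\tfrac{d-i}{d}\log s_0+\tfrac{i}{d}\log s_d$, which is exactly $2)$; conversely $2)$ exhibits $\log s_i$ as affine, giving $1)$. For $2)\Leftrightarrow 4)$: expanding $((\alpha+\beta)^d)=\sum_{i=0}^d\binom{d}{i}s_i$ and using (\ref{eq101}) in the form $s_i\ge s_0^{(d-i)/d}s_d^{i/d}$, summation with the binomial theorem gives
\[
((\alpha+\beta)^d)\ge \sum_{i=0}^d\binom{d}{i}s_0^{(d-i)/d}s_d^{i/d}=(s_0^{1/d}+s_d^{1/d})^d,
\]
as in (\ref{eq105}). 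Since each summand in this bound is strictly positive, equality in $4)$ forces equality in every individual inequality $s_i\ge s_0^{(d-i)/d}s_d^{i/d}$, which is $2)$; the converse is immediate by substitution. The implication $2)\Rightarrow 3)$ is the special case $i=d-1$.

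The main obstacle is $3)\Rightarrow 1)$. Set $t_i=\log s_i-\log s_{i-1}$ for $1\le i\le d$. The log-concavity inequality $s_i^2\ge s_{i-1}s_{i+1}$ from (\ref{eq100}) rewrites as $t_i\ge t_{i+1}$, so $t_1\ge t_2\ge\cdots\ge t_d$. Condition $3)$, namely $d\log s_{d-1}=\log s_0+(d-1)\log s_d$, is the same as
\[
d\bigl(\log s_{d-1}-\log s_0\bigr)=(d-1)\bigl(\log s_d-\log s_0\bigr),
\]
i.e., $d(t_1+\cdots+t_{d-1})=(d-1)(t_1+\cdots+t_d)$, which simplifies to
\[
t_1+\cdots+t_{d-1}=(d-1)\,t_d.
\]
But each $t_i\ge t_d$ for $1\le i\le d-1$, so the average of $t_1,\dots,t_{d-1}$ equals the minimum $t_d$ only when $t_1=\cdots=t_{d-1}=t_d$. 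This is condition $1)$, completing the cycle. No further ingredients beyond Corollary \ref{KT2} and Lemma \ref{Lemma62} are needed.
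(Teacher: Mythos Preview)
Your proof is correct and follows essentially the same approach as the paper. Both arguments rest on the log-concavity from Corollary~\ref{KT2} and the observation that a single extremal equality forces all the consecutive ratios to coincide; your additive variables $t_i=\log s_i-\log s_{i-1}$ and the ``average equals minimum'' step are simply the logarithmic form of the paper's telescoping product $\frac{s_{d-1}}{s_0}=\prod_{i=1}^{d-1}\frac{s_i}{s_{i-1}}\ge\bigl(\frac{s_d}{s_{d-1}}\bigr)^{d-1}$, and the treatments of $1)\Leftrightarrow 2)$ and $2)\Leftrightarrow 4)$ match as well.
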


\begin{proof} All $s_i>0$ by Lemma \ref{Lemma62}. We first establish the equivalence of 1) and 3). From (\ref{eq100}), we obtain
$$
\frac{s_{d-1}}{s_0}=(\frac{s_{d-1}}{s_{d-2}})(\frac{s_{d-2}}{s_{d-3}})\cdots(\frac{s_1}{s_0})\ge (\frac{s_d}{s_{d-1}})^{d-1}.
$$
We have equality of the left and right hand terms if and only if $s_{d-1}^d=s_0s_d^{d-1}$ and if and only if all of the
inequalities of (\ref{eq100}) are equalities.

From the inequalities
$$
(\frac{s_i}{s_{i-1}})^{d-i}\cdots(\frac{s_1}{s_0})^{d-i}\ge 
(\frac{s_d}{s_{d-1}})^i\cdots(\frac{s_{i+1}}{s_i})^i
$$
we obtain the equivalence of 2) and 1). We obtain the equivalence of 2) and 4) from (\ref{eq105}).
\end{proof}

\begin{Theorem}\label{KT} Suppose that $\alpha_1,\ldots,\alpha_d\in \mbox{Psef}(\mathcal X)$. Then for every $1\le p\le d$, we have
$$
<\alpha_1\cdot\ldots\cdot \alpha_d>\ge <\alpha_1^p\cdot\alpha_{p+1}\cdot\ldots\cdot\alpha_d>^{\frac{1}{p}}\ldots
<\alpha_p^p\cdot\alpha_{p+1}\cdot\ldots\cdot\alpha_d>^{\frac{1}{p}}.
$$
In particular,
$$
<\alpha_1\cdot\ldots\cdot\alpha_d>\ge <\alpha_1^d>^{\frac{1}{d}}\ldots<\alpha_d^d>^{\frac{1}{d}}.
$$
\end{Theorem}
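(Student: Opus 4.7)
The plan is to reduce to the classical Khovanskii-Teissier inequality (Theorem \ref{KT1}) on a common projective model, applied to nef approximations provided by Proposition \ref{Prop10}. First I would handle the case in which every $\alpha_i\in\mbox{Big}(\mathcal X)$. Write $A_j:=<\alpha_j^p\cdot\alpha_{p+1}\cdot\ldots\cdot\alpha_d>\in\RR$ for $1\le j\le p$. Given $\epsilon>0$, Proposition \ref{Prop10} together with the directed structure of the approximating set (Proposition \ref{Prop30}, Lemma \ref{Lemma10}) and monotonicity (Proposition \ref{PropNef}) produces, for each $j$, a single nef class $\beta_j\le\alpha_j$ and nef classes $\gamma_l^{(j)}\le\alpha_l$ (for $p<l\le d$) with
$$(\beta_j^p\cdot\gamma_{p+1}^{(j)}\cdot\ldots\cdot\gamma_d^{(j)})>A_j-\epsilon;$$
here directedness is used to collapse $p$ independent nef lower bounds of $\alpha_j$ into a single $\beta_j$. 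Next I would synchronize the tails across $j$: for each $l\in\{p+1,\ldots,d\}$, directedness yields a single nef $\gamma_l\le\alpha_l$ dominating $\gamma_l^{(1)},\ldots,\gamma_l^{(p)}$, and monotonicity preserves the estimates, producing one tuple $\beta_1,\ldots,\beta_p,\gamma_{p+1},\ldots,\gamma_d$ with $(\beta_j^p\cdot\gamma_{p+1}\cdot\ldots\cdot\gamma_d)>A_j-\epsilon$ for every $j$.

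Now I would pull these classes back to a common projective $Y\in I(X)$ and apply Theorem \ref{KT1} on $Y$ to $\beta_1,\ldots,\beta_p,\gamma_{p+1},\ldots,\gamma_d$, obtaining
$$
(\beta_1\cdot\ldots\cdot\beta_p\cdot\gamma_{p+1}\cdot\ldots\cdot\gamma_d)\ge\prod_{j=1}^p(\beta_j^p\cdot\gamma_{p+1}\cdot\ldots\cdot\gamma_d)^{1/p}\ge\prod_{j=1}^p(A_j-\epsilon)^{1/p}.
$$
Since every factor on the left is nef and dominated by the corresponding $\alpha_i$, Proposition \ref{Prop10} bounds the left-hand side by $<\alpha_1\cdot\ldots\cdot\alpha_d>$. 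Letting $\epsilon\to 0^+$ proves the theorem in the big case.

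For the general pseudoeffective case, fix a big class $\omega\in M^1(\mathcal X)$ and apply the big case to $\alpha_i+\epsilon\omega$; passing to the limit $\epsilon\to 0^+$ via Definition \ref{Def2} and the continuity of products and $p$-th roots on $\RR$ yields the general inequality. The main obstacle is the simultaneous approximation step: a one-shot application of Theorem \ref{KT1} would give only $<\alpha_1\cdot\ldots\cdot\alpha_d>\ge\sup_{(\beta,\gamma)}\prod_j(\beta_j^p\cdot\gamma_{p+1}\cdot\ldots\cdot\gamma_d)^{1/p}$, and the supremum of a product is in general strictly smaller than the product of the suprema; producing one tuple that is $\epsilon$-optimal for every $A_j$ at once is exactly what directedness (Lemma \ref{Lemma10}, Proposition \ref{Prop30}) together with monotonicity (Proposition \ref{PropNef}) achieves.
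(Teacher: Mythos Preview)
Your proposal is correct and follows essentially the same route as the paper: reduce the big case to Theorem \ref{KT1} applied to a single tuple of nef approximants, using the directedness furnished by Lemma \ref{Lemma10}/Proposition \ref{Prop30} together with monotonicity (Proposition \ref{PropNef}) to achieve simultaneous $\epsilon$-approximation of all the $A_j$, then pass to the pseudoeffective case via Definition \ref{Def2}. The paper's proof invokes Lemma \ref{Lemma11} to produce one tuple $\beta_1,\ldots,\beta_d$ approximating all the relevant positive intersection products at once (implicitly using the same directedness you spell out), and it also approximates $<\alpha_1\cdot\ldots\cdot\alpha_d>$ rather than bounding it from below via Proposition \ref{Prop10} as you do; these are cosmetic differences only.
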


\begin{proof} 
First suppose that $\alpha_1,\ldots,\alpha_d$ are big. $L^0(\mathcal X)=\RR$ has the Euclidean topology. Let
$$
S=\left\{
(\beta_1\cdot\ldots\cdot\beta_d)\mid \begin{array}{l}\mbox{  $\beta_i\in M^1(\mathcal X)$ is nef for $1\le i\le d$}\\
\mbox{ and $D_i=\beta_i-\alpha_i$ is an effective $\QQ$-Cartier divisor}
\end{array}\right\}
$$ 
and
$$
S_i=\left\{
(\beta_i^p\cdot\beta_{p+1}\cdot\ldots\cdot\beta_d))\mid 
\begin{array}{l}\mbox{  $\beta\in M^1(\mathcal X)$ is nef 
 and $D_j=\beta_j-\alpha_j$ }\\
 \mbox{ is an effective $\QQ$-Cartier divisor for $j\in \{i,p+1,\ldots,d\}$}
\end{array}\right\}
$$ 
for $1\le i\le p$. The sets $S$ and $S_i$ are nets and $<\alpha_1\cdot\ldots\cdot\alpha_d>$, $<\alpha_i^p\cdot\alpha_{p+1}\cdot\ldots\cdot\alpha_d>$ are their respective limit points by Proposition \ref{Prop30}. Thus given $\epsilon>0$, there exist $\beta_1,\ldots,\beta_d\in \mbox{Nef}(\mathcal X)$ such that 
$$
|<\alpha_1\cdot\ldots\cdot\alpha_d>-(\beta_1\cdot\ldots\cdot\beta_d)|<\epsilon
$$
and
$$
|<\alpha_i^p\cdot\alpha_{p+1}\cdot\ldots\cdot\alpha_d>-(\beta_i^p\cdot\beta_{p+1}\cdot\ldots\cdot\beta_d))>|<\epsilon
$$
for $1\le i\le p$ by Lemma \ref{Lemma11}. We have that
$$
(\beta_1\cdot\ldots\cdot\beta_d)\ge (\beta_1^p\cdot\beta_{p+1}\cdot\ldots\cdot\beta_d)^{\frac{1}{p}}\cdots(\beta_p^p\cdot\beta_{p+1}\cdot\ldots\beta_d)^{\frac{1}{p}}
$$
by Theorem \ref{KT1}. Letting $\epsilon$ go to zero, we obtain the conclusions of the theorem.

Finally, the case when $\alpha_1,\ldots,\alpha_d$ are pseudoeffective follows from the big case, Definition \ref{Def2},  and continuity
 of the function $\RR_{>0}\rightarrow L^0(\mathcal X)=\RR$, defined by
 $$
 \begin{array}{lll}
 t&\mapsto& <(\alpha_1+t\omega)\cdot\ldots\cdot(\alpha_d+t\omega)>\\
 &&-
 <(\alpha_1+t\omega)^p\cdot(\alpha_{p+1}+t\omega)\cdot\ldots\cdot(\alpha_d+t\omega)>^{\frac{1}{p}}\cdots\\
 &&\cdots <(\alpha_p+t\omega)^p\cdot(\alpha_{p+1}+t\omega)\cdot\ldots\cdot(\alpha_d+t\omega)>^{\frac{1}{p}}
 \end{array}
 $$
 where $\omega$ is a fixed big class.
\end{proof}

\begin{Theorem}\label{Theorem50} Suppose that $\alpha_{p+1},\ldots,\alpha_d\in \mbox{Psef}(\mathcal X)$. Then the function 
$$
\alpha\mapsto <\alpha^p\cdot\alpha_{p+1}\cdot\ldots\cdot\alpha_d>^{\frac{1}{p}}
$$
is homogeneous of degree 1 and concave on $\mbox{Psef}(\mathcal X)$. In particular, the function 
$$
\alpha\mapsto <\alpha^d>^{\frac{1}{d}}
$$
 has this property.
\end{Theorem}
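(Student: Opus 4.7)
My plan is to reduce the statement to two ingredients already available: the Khovanskii--Teissier inequality for the positive intersection product (Theorem \ref{KT}), and the formal properties (symmetry, 1-homogeneity, super-additivity in each variable) of the positive intersection product established in Lemma \ref{LemmaSym}. Positive homogeneity of degree $1$ of $\alpha\mapsto \langle\alpha^p\cdot\alpha_{p+1}\cdot\ldots\cdot\alpha_d\rangle^{1/p}$ on $\mathrm{Psef}(\mathcal X)$ is immediate, since Lemma \ref{LemmaSym} gives $\langle(t\alpha)^p\cdot\alpha_{p+1}\cdot\ldots\cdot\alpha_d\rangle=t^p\langle\alpha^p\cdot\alpha_{p+1}\cdot\ldots\cdot\alpha_d\rangle$ for $t>0$, and the $p$-th root is homogeneous of degree $1$.

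For concavity, I would first establish super-additivity: for $\alpha,\beta\in\mathrm{Psef}(\mathcal X)$,
\[
\langle(\alpha+\beta)^p\cdot\alpha_{p+1}\cdot\ldots\cdot\alpha_d\rangle^{1/p}\ge\langle\alpha^p\cdot\alpha_{p+1}\cdot\ldots\cdot\alpha_d\rangle^{1/p}+\langle\beta^p\cdot\alpha_{p+1}\cdot\ldots\cdot\alpha_d\rangle^{1/p}.
\]
The plan is to expand the left-hand $p$-th power by iteratively applying the super-additivity in each of the first $p$ variables given by Lemma \ref{LemmaSym}, obtaining
\[
\langle(\alpha+\beta)^p\cdot\alpha_{p+1}\cdot\ldots\cdot\alpha_d\rangle\ge\sum_{i=0}^{p}\binom{p}{i}\langle\alpha^i\cdot\beta^{p-i}\cdot\alpha_{p+1}\cdot\ldots\cdot\alpha_d\rangle.
\]
Next, to each mixed term I would apply Theorem \ref{KT} (with the first $p$ entries being $\alpha$ repeated $i$ times and $\beta$ repeated $p-i$ times, the remaining $\alpha_{p+1},\ldots,\alpha_d$ unchanged), which yields
\[
\langle\alpha^i\cdot\beta^{p-i}\cdot\alpha_{p+1}\cdot\ldots\cdot\alpha_d\rangle\ge\langle\alpha^p\cdot\alpha_{p+1}\cdot\ldots\cdot\alpha_d\rangle^{i/p}\langle\beta^p\cdot\alpha_{p+1}\cdot\ldots\cdot\alpha_d\rangle^{(p-i)/p}.
\]
Summing with the binomial coefficients and recognising the right-hand side as the binomial expansion of $(A^{1/p}+B^{1/p})^p$ with $A=\langle\alpha^p\cdot\alpha_{p+1}\cdot\ldots\cdot\alpha_d\rangle$ and $B=\langle\beta^p\cdot\alpha_{p+1}\cdot\ldots\cdot\alpha_d\rangle$, then taking $p$-th roots, delivers super-additivity.

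Concavity then follows by a standard argument: for $\lambda\in[0,1]$ and $\alpha,\beta\in\mathrm{Psef}(\mathcal X)$ (so that $(1-\lambda)\alpha$ and $\lambda\beta$ remain in $\mathrm{Psef}(\mathcal X)$), super-additivity together with $1$-homogeneity gives
\[
f((1-\lambda)\alpha+\lambda\beta)\ge f((1-\lambda)\alpha)+f(\lambda\beta)=(1-\lambda)f(\alpha)+\lambda f(\beta),
\]
where $f(\alpha):=\langle\alpha^p\cdot\alpha_{p+1}\cdot\ldots\cdot\alpha_d\rangle^{1/p}$. The special case $p=d$ (with no fixed factors $\alpha_{p+1},\ldots,\alpha_d$) yields the ``in particular'' statement. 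The main technical obstacle, though small, is making sure the iterative super-additive expansion of $\langle(\alpha+\beta)^p\cdot\alpha_{p+1}\cdot\ldots\cdot\alpha_d\rangle$ is legitimate as a chain of inequalities in $L^0(\mathcal X)=\mathbb R$ with respect to the partial order $\ge$; this is secured by Lemma \ref{Lemma4}/Proposition \ref{PropNef} (so that the intermediate positive intersection products lie in $\mathrm{Psef}(L^0(\mathcal X))$, i.e.\ are nonnegative reals) together with the compatibility of $\ge$ on $L^0(\mathcal X)$ with the usual order on $\mathbb R$ noted after Lemma \ref{Lemma3}.
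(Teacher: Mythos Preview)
Your proof is correct and follows essentially the same approach as the paper: both arguments use the super-additivity from Lemma~\ref{LemmaSym} to expand $\langle(\alpha+\beta)^p\cdot\alpha_{p+1}\cdot\ldots\cdot\alpha_d\rangle$ into a binomial sum, then apply Theorem~\ref{KT} termwise and recognise the result as $(A^{1/p}+B^{1/p})^p$. The only cosmetic difference is that the paper carries out this computation directly with $t\alpha$ and $(1-t)\beta$ in place of $\alpha$ and $\beta$, thereby proving concavity in one stroke, whereas you first prove super-additivity and then deduce concavity via $1$-homogeneity; the two are equivalent.
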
 

\begin{proof} Homogeneity follows from Lemma \ref{LemmaSym}. We will establish that the function is concave. Suppose that $\alpha,\beta\in \mbox{Psef}(\mathcal X)$ and $0\le t\le 1$.
We have that
$$
\begin{array}{l}
<(t\alpha+(1-t)\beta)^p\cdot\alpha_{p+1}\cdot\ldots\cdot\alpha_d>\\
\ge\sum_{i=0}^p\binom{p}{i}<(t\alpha)^i\cdot((1-t)\beta)^{p-i}\cdot\alpha_{p+1}\cdot\ldots\cdot\alpha_d>\\
\ge \sum_{i=0}^p\binom{p}{i}<(t\alpha)^p\cdot\alpha_{p+1}\cdot\ldots\cdot\alpha_d>^{\frac{i}{p}}
<((1-t)\beta)^p\cdot\alpha_{p+1}\cdot\ldots\cdot\alpha_d>^{\frac{p-i}{p}}\\
= (t<\alpha^p\cdot\alpha_{p+1}\cdot\ldots\cdot\alpha_d>^{\frac{1}{p}}+(1-t)<\beta^p\cdot\alpha_{p+1}\cdot\ldots\cdot \alpha_d>^{\frac{1}{p}})^{p}.
\end{array}
$$
The second line follows from super additivity in Lemma \ref{LemmaSym}, and the third line follows from Theorem \ref{KT}.
\end{proof}

Suppose that $\alpha,\beta\in N^1(X)$ are big classes. There exists $t>0$ such that 
$\alpha-t\beta$ is big. When $X$ is projective, this follows from (\ref{eq80}). Otherwise, let $f:Y\rightarrow X$ be a birational morphism where $Y$ is a nonsingular projective variety. There exists $t>0$ such that $f^*(\alpha)-tf^*(\beta)$ is big. Thus
$\alpha-t\beta$ is big by (\ref{eq92}).

\begin{Definition} The slope of $\beta$ with respect to $\alpha$ is
$$
s=s(\alpha,\beta)=\sup\{t>0\mid \alpha\ge t\beta\}.
$$
\end{Definition}
Since $\mbox{Psef}(X)$ is closed and $\mbox{Big}(X)$ is open, we have that $\alpha\ge s\beta$ and $\alpha-t\beta$ is big for $t<s$. We have that $\alpha=\beta$ if and only if $s(\alpha,\beta)=s(\beta,\alpha)=1$.

Theorem \ref{TheoremF} is proven in Theorem F of \cite{BFJ} when $k$ is algebraically closed of characteristic zero.

\begin{Theorem}\label{TheoremF}(Diskant's inequality) Suppose that $X$ is a complete variety of dimension $d$ over a field $k$, $\alpha,\beta\in N^1(X)$ are  nef with  $(\alpha^d)>0$, $(\beta^d)>0$ and $s=s(\alpha,\beta)$ is the slope of $\beta$ with respect to $\alpha$. 
Then
$$
(\alpha^{d-1}\cdot\beta)^{\frac{d}{d-1}}-(\alpha^d)(\beta^d)^{\frac{1}{d-1}}\ge ((\alpha^{d-1}\cdot\beta)^{\frac{1}{d-1}}-s(\beta^d)^{\frac{1}{d-1}})^d.
$$
\end{Theorem}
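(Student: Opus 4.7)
The plan is to combine the differentiability of volume (Theorem \ref{TheoremA}) with the concavity of the positive-intersection mixed volume (Theorem \ref{Theorem50}), organized around the decomposition $\alpha = (\alpha - t\beta) + t\beta$. Set $X := (\alpha^{d-1}\cdot\beta)^{1/(d-1)}$ and $Y := (\beta^d)^{1/(d-1)}$ and let $\gamma_t := \alpha - t\beta$. By the remarks after the Definition of slope, $\gamma_t$ is pseudoeffective for $t\in[0,s]$ and big for $t\in[0,s)$; moreover $\gamma_s$ lies on the boundary of $\mbox{Psef}(X)$, so $\mbox{vol}(\gamma_s)=0$ by the theorem that volume vanishes precisely on that boundary. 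Diskant's inequality is equivalent to
\[
(\alpha^d)\,Y \;\le\; X^d - (X - sY)^d,
\]
so the strategy is to bound $(\alpha^d)$ from above by an integral that evaluates to this right hand side.

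Let $\varphi(t):=<\gamma_t^{d-1}\cdot\beta>$. Theorem \ref{TheoremA} applied on the big cone, combined with Proposition \ref{Prop11} to identify the positive intersection with the ordinary one in the nef slot $\beta$, gives $\tfrac{d}{dt}\mbox{vol}(\gamma_t)=-d\,\varphi(t)$ for $t\in[0,s)$. Integrating over $[0,s']$ and letting $s'\to s^-$, using $\mbox{vol}(\gamma_s)=0$ together with continuity of $\varphi$ on $[0,s]$ (Proposition \ref{PropCont} yields continuity on $[0,s)$, and Definition \ref{Def2} with $\omega=\beta$ gives the limit at $t=s$), produces the integral representation
\[
(\alpha^d) \;=\; d\int_0^s \varphi(t)\,dt.
\]

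The main pointwise bound I need is $\varphi(t)\le (X-tY)^{d-1}$ for every $t\in[0,s]$. Define $g(\zeta):=<\zeta^{d-1}\cdot\beta>^{1/(d-1)}$. By Theorem \ref{Theorem50} (with $p=d-1$ and remaining argument $\beta$), $g$ is homogeneous of degree one and concave on $\mbox{Psef}(\mathcal X)$, hence super-additive there. Applying super-additivity to the decomposition $\alpha=\gamma_t+t\beta$ of $\alpha$ into two pseudoeffective classes yields
\[
X \;=\; g(\alpha) \;\ge\; g(\gamma_t)+g(t\beta) \;=\; \varphi(t)^{1/(d-1)} + tY,
\]
where the identifications $g(\alpha)=X$ and $g(t\beta)=tY$ use Proposition \ref{Prop11} for the nef classes $\alpha$ and $\beta$ together with the homogeneity of the positive intersection product (Lemma \ref{LemmaSym}). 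Rearranging delivers the bound on $\varphi(t)$.

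Substituting into the integral representation gives
\[
(\alpha^d) \;\le\; d\int_0^s (X-tY)^{d-1}\,dt \;=\; \frac{X^d-(X-sY)^d}{Y},
\]
which after multiplication by $Y$ is precisely Diskant's inequality. The principal obstacle is the integral representation of $(\alpha^d)$: pushing the derivative formula all the way to the endpoint $t=s$, where $\gamma_s$ sits on the boundary of the psef cone and volume is no longer differentiable, requires both the vanishing $\mbox{vol}(\gamma_s)=0$ and the continuous extension of the positive intersection product across that boundary provided by Definition \ref{Def2}.
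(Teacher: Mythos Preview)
Your argument is correct and essentially identical to the paper's: both obtain $(\alpha^d)=d\int_0^s\langle\gamma_t^{d-1}\rangle(\beta)\,dt$ from Theorem~\ref{TheoremA}, bound the integrand via the concavity of Theorem~\ref{Theorem50} applied to the decomposition $\alpha=\gamma_t+t\beta$, and integrate; your super-additivity formulation and the paper's explicit concavity substitution $u=t/(2s)$, $\bar\alpha=2s\beta$, $\bar\beta=(1-t/(2s))^{-1}\gamma_t$ unwind to the same pointwise inequality. One small correction: the identity $\langle\gamma_t^{d-1}\rangle(\beta)=\langle\gamma_t^{d-1}\cdot\beta\rangle$ that links Theorem~\ref{TheoremA} to your $\varphi$ is not Proposition~\ref{Prop11} (which requires \emph{all} entries nef, whereas $\gamma_t$ is only big), but rather follows from Lemma~\ref{Lemma11} together with the observation that for nef $\beta$ the defining net for $\langle\gamma_t^{d-1}\cdot\beta\rangle$ is $\{z\cdot\beta:z\in S\}$, exactly as the paper argues.
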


\begin{proof} Let $\alpha_t=\alpha-t\beta$ for $t\ge 0$. By the definition of the slope $s=s(\alpha,\beta)$, and since 
bigness is an open condition,
 we have that $\alpha_t$ is big if and only if $t<s$. By Theorem \ref{TheoremA}, $f(t)={\rm vol}(\alpha_t)$ is differentiable for $t<s$, with $f'(t)=-d<\alpha_t^{d-1}>(\beta)$. We have that $f(0)=(\alpha^d)$ and $f(t)\rightarrow  0$ as $t\rightarrow s$ by continuity of ${\rm vol}$, so we have that
$$
(\alpha^d)=d\int_{t=0}^s<\alpha_t^{d-1}>(\beta) dt.
$$

From concavity in Theorem \ref{Theorem50}, we have the following formula. Suppose that $\overline\alpha$ and $\overline\beta$ are in 
$\mbox{Psef}(\mathcal X)$ and $0\le u\le 1$. Then
\begin{equation}\label{eq70}
<(u\overline\alpha+(1-u)\overline\beta)^{d-1}\cdot\beta>^{\frac{1}{d-1}}\ge
u<\overline\alpha^{d-1}\cdot\beta>^{\frac{1}{d-1}}+(1-u)<\overline\beta^{d-1}\cdot\beta>^{\frac{1}{d-1}}.
\end{equation}

For $0\le t\le s$, set 
$$
u=\frac{t}{2s},\,\,\overline\alpha=2s\beta\mbox{ and }\overline\beta=\frac{1}{(1-\frac{t}{2s})}\alpha_t.
$$
Substituting into (\ref{eq70}), we obtain
$$
<\alpha^{d-1}\cdot\beta>^{\frac{1}{d-1}}\ge t<\beta^d>^{\frac{1}{d-1}}+<\alpha_t^{d-1}\cdot\beta>^{\frac{1}{d-1}}.
$$
Let $S$ be the set of Proposition \ref{Prop30} used to compute $<\alpha_t^{d-1}>$ and $S'$ be the set of Proposition \ref{Prop30}
used to compute $<\alpha_t^{d-1}\cdot\beta>$. Since $\beta$ is nef, 
$$
S'=\{z\cdot\beta\mid z\in S\}.
$$
By Lemma \ref{Lemma11},
$$
<\alpha_t^{d-1}>(\beta)=<\alpha_t^{d-1}\cdot\beta>.
$$
We thus have
$$
(<\alpha_t^{d-1}>(\beta))^{\frac{1}{d-1}}+t(\beta^d)^{\frac{1}{d-1}}\le(\alpha^{d-1}\cdot\beta)^{\frac{1}{d-1}},
$$
by Proposition \ref{Prop11}. We  obtain
$$
(\alpha^d)\le d\int_{t=0}^s(\alpha^{d-1}\cdot\beta)^{\frac{1}{d-1}}-t(\beta^d)^{\frac{1}{d-1}})^{d-1}dt,
$$
and the result follows since
$$
\frac{d}{dt}((\alpha^{d-1}\cdot\beta)^{\frac{1}{d-1}}-t(\beta^d)^{\frac{1}{d-1}})^d=-d(\beta^d)^{\frac{1}{d-1}}((\alpha^{d-1}\cdot\beta)^{\frac{1}{d-1}}-t(\beta^d)^{\frac{1}{d-1}})^{d-1}.
$$
\end{proof}

In \cite{T}, Teissier defines the inradius of $\alpha$ with respect to $\beta$ as
$$
r(\alpha;\beta)=s(\alpha,\beta)
$$
and the outradius of $\alpha$ with respect to $\beta$ as
$$
R(\alpha;\beta)=\frac{1}{s(\beta,\alpha)}.
$$
As pointed out  in \cite{BFJ}, The Diskant inequality \ref{TheoremF} answers  Problem B in \cite{T}. In the following theorem, we
write out explicitly the bounds asked for in Problem B \cite{T}.

\begin{Theorem}\label{TheoremG} Suppose that $\alpha,\beta\in N^1(X)$ are nef with  $(\alpha^d)>0$, $(\beta^d)>0$  on a complete variety $X$ of dimension $d$ over a field $k$.
Then
\begin{equation}\label{eq106}
\frac{s_{d-1}^{\frac{1}{d-1}}-(s_{d-1}^{\frac{d}{d-1}}-s_0^{\frac{1}{d-1}}s_d)^{\frac{1}{d}}}{s_0^{\frac{1}{d-1}}}
\le r(\alpha;\beta)\le \frac{s_d}{s_{d-1}}
\end{equation}
\end{Theorem}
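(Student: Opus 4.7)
The plan is to derive the two bounds independently. The upper bound follows from a one-line intersection-theoretic calculation, and the lower bound is obtained by algebraically inverting Diskant's inequality (Theorem \ref{TheoremF}).

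First I would handle the upper bound. Set $s = r(\alpha;\beta)$. Since $\mbox{Psef}(\mathcal X)$ is closed, the supremum defining $s$ is attained, so $\alpha - s\beta$ is pseudoeffective. Apply Lemma \ref{Lemma4} with $\alpha_1 = \alpha - s\beta$ pseudoeffective and $\alpha_2 = \cdots = \alpha_d = \alpha$ nef: the class $(\alpha - s\beta)\cdot \alpha^{d-1}$ lies in $\mbox{Psef}(L^0(\mathcal X)) = \RR_{\ge 0}$. Expanding via multilinearity gives $s_d - s\,s_{d-1} \ge 0$, and Lemma \ref{Lemma62} (applied to the nef and big classes, which are big because $(\alpha^d),(\beta^d)>0$) ensures $s_{d-1}>0$. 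Hence $r(\alpha;\beta) \le s_d/s_{d-1}$.

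Next I would obtain the lower bound by inverting Diskant's inequality. In the notation $s_i = (\alpha^i\cdot\beta^{d-i})$, Theorem \ref{TheoremF} reads
\begin{equation*}
s_{d-1}^{d/(d-1)} - s_d\,s_0^{1/(d-1)} \;\ge\; \bigl(s_{d-1}^{1/(d-1)} - s\,s_0^{1/(d-1)}\bigr)^d.
\end{equation*}
The Khovanskii--Teissier inequality $s_{d-1}^d \ge s_0\, s_d^{d-1}$ (Corollary \ref{KT2}, inequality (\ref{eq101}) with $i=d-1$) shows that the left side is nonnegative. Taking $d$-th roots, rearranging, and dividing by $s_0^{1/(d-1)}>0$ gives
\begin{equation*}
r(\alpha;\beta) = s \;\ge\; \frac{s_{d-1}^{1/(d-1)} - \bigl(s_{d-1}^{d/(d-1)} - s_0^{1/(d-1)} s_d\bigr)^{1/d}}{s_0^{1/(d-1)}},
\end{equation*}
which is the desired bound.

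The main work has already been done in Theorem \ref{TheoremF}; the present theorem is essentially a sign-tracked algebraic manipulation. The only subtlety is ensuring positivity of the quantities appearing under the roots. The hypothesis $(\alpha^d),(\beta^d) > 0$ combined with Lemma \ref{Lemma62} forces every $s_i > 0$ (since $\alpha,\beta$ are nef and big), so the divisions and fractional powers are all legitimate; and the chain of Khovanskii--Teissier inequalities guarantees both that the expression under the inner $d$-th root is nonnegative and that the resulting numerator is nonnegative, so no branch ambiguity arises.
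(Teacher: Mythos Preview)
Your proof is correct and follows essentially the same approach as the paper: the upper bound via pairing the pseudoeffective class $\alpha - s\beta$ against $\alpha^{d-1}$ (the paper cites Proposition~\ref{PropNef} where you cite Lemma~\ref{Lemma4}, which amounts to the same thing), and the lower bound by algebraically inverting Diskant's inequality (Theorem~\ref{TheoremF}) with the Khovanskii--Teissier inequality $s_{d-1}^d \ge s_0 s_d^{d-1}$ ensuring the radicand is nonnegative. The paper additionally records $s_{d-1}^{1/(d-1)} - s\,s_0^{1/(d-1)} \ge 0$ (its equation~(\ref{eq110}), which follows from the upper bound together with that same KT inequality) before extracting the $d$-th root, whereas you leave this implicit; your step is nonetheless valid, since when that quantity is negative the desired inequality $\bigl(\text{LHS}\bigr)^{1/d} \ge s_{d-1}^{1/(d-1)} - s\,s_0^{1/(d-1)}$ holds trivially.
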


\begin{proof} Let $s=s(\alpha,\beta)=r(\alpha,\beta)$. Since $\alpha\ge s\beta$ and $\alpha$, $\beta$ are nef, we have that $(\alpha^d)\ge s(\beta\cdot\alpha^{d-1})$ by Proposition \ref{PropNef}. This gives us the upper bound. We also have that
\begin{equation}\label{eq110} 
(\alpha^{d-1}\cdot\beta)^{\frac{1}{d-1}}-s(\beta^d)^{\frac{1}{d-1}}\ge 0.
\end{equation}
 We  obtain the lower bound from Theorem \ref{TheoremF} (using (\ref{eq110})
and the inequality $s_{d-1}^d\ge s_0s_d^{d-1}$ to ensure that the bound is a positive real number).
\end{proof}

\begin{Theorem}\label{TheoremH} Suppose that $\alpha,\beta\in N^1(X)$ are nef with $(\alpha^d)>0$, $(\beta^d)>0$  on a complete variety $X$ of dimension $d$ over a field $k$.
Then
\begin{equation}\label{eq107}
\frac{s_{d-1}^{\frac{1}{d-1}}-(s_{d-1}^{\frac{d}{d-1}}-s_0^{\frac{1}{d-1}}s_d)^{\frac{1}{d}}}{s_0^{\frac{1}{d-1}}}
\le r(\alpha;\beta)\le \frac{s_d}{s_{d-1}}\le\frac{s_1}{s_0}\le R(\alpha;\beta)\le 
\frac{s_d^{\frac{1}{d-1}}}{s_1^{\frac{1}{d-1}}-(s_1^{\frac{1}{d-1}}-s_d^{\frac{1}{d-1}}s_0)^{\frac{1}{d}}}
\end{equation}
\end{Theorem}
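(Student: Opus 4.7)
The plan is to deduce Theorem \ref{TheoremH} by combining three ingredients already in hand: Theorem \ref{TheoremG} for the original pair $(\alpha,\beta)$, the same theorem applied to the swapped pair $(\beta,\alpha)$, and the log-concavity of the sequence $s_0,\ldots,s_d$ from Corollary \ref{KT2}. The two leftmost inequalities, bounding $r(\alpha;\beta)$, are literally the conclusion of Theorem \ref{TheoremG}, so no new work is required there.

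For the central inequality $s_d/s_{d-1}\le s_1/s_0$, I would invoke the Khovanskii--Teissier log-concavity $s_i^2\ge s_{i-1}s_{i+1}$ from \eqref{eq100}. Rewriting this as $s_{i+1}/s_i\le s_i/s_{i-1}$, the sequence of ratios $s_{i+1}/s_i$ is non-increasing in $i$, and in particular $s_d/s_{d-1}\le s_1/s_0$. (Note that all $s_i$ are strictly positive by Lemma \ref{Lemma62}, since $\alpha$ and $\beta$ are nef and big by the hypotheses $(\alpha^d)>0$, $(\beta^d)>0$ together with the remark before Proposition \ref{Prop63}; so the division is legitimate.)

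For the two rightmost inequalities, bounding $R(\alpha;\beta)$, I would apply Theorem \ref{TheoremG} verbatim with $\alpha$ and $\beta$ exchanged. Writing $t_i=(\beta^i\cdot\alpha^{d-i})$, we have $t_i=s_{d-i}$, so $t_0=s_d$, $t_{d-1}=s_1$, $t_d=s_0$, and Theorem \ref{TheoremG} yields
\[
\frac{s_1^{\frac{1}{d-1}}-(s_1^{\frac{d}{d-1}}-s_d^{\frac{1}{d-1}}s_0)^{\frac{1}{d}}}{s_d^{\frac{1}{d-1}}}
\le r(\beta;\alpha)\le \frac{s_0}{s_1}.
\]
Since by definition $R(\alpha;\beta)=1/s(\beta,\alpha)=1/r(\beta;\alpha)$, inverting the chain (and using that both sides are strictly positive) flips the inequalities and produces exactly the two rightmost bounds on $R(\alpha;\beta)$.

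There is essentially no conceptual obstacle: Theorem \ref{TheoremF} and the Khovanskii--Teissier inequalities carry all the weight, and Theorem \ref{TheoremH} is a symmetry-plus-bookkeeping corollary. The only technical care needed is verifying positivity of the denominators and of the quantities under the $d$th roots. For the left bound this is the condition $s_{d-1}^{d/(d-1)}\ge s_0^{1/(d-1)}s_d$, equivalent to $s_{d-1}^d\ge s_0 s_d^{d-1}$, which is \eqref{eq101} with $i=d-1$; for the right bound the analogous condition $s_1^{d/(d-1)}\ge s_d^{1/(d-1)}s_0$ is \eqref{eq101} with $i=1$. Both are already in the paper, so the proof reduces to writing these observations down.
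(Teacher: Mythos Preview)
Your proposal is correct and follows essentially the same approach as the paper: invoke Theorem~\ref{TheoremG} for the pair $(\alpha,\beta)$ to get the two leftmost inequalities, apply Theorem~\ref{TheoremG} to the swapped pair $(\beta,\alpha)$ and invert using $R(\alpha;\beta)=1/s(\beta,\alpha)$ to obtain the two rightmost inequalities, and use the log-concavity inequality~\eqref{eq100} for the middle comparison $s_d/s_{d-1}\le s_1/s_0$. Your added remarks on positivity of the $s_i$ and of the radicands are a welcome elaboration of points the paper leaves implicit.
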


\begin{proof} By Theorem \ref{TheoremG}, we have that
$$
\frac{s_1^{\frac{1}{d-1}}-(s_1^{\frac{d}{d-1}}-s_d^{\frac{1}{d-1}}s_0)^{\frac{1}{d}}}{s_d^{\frac{1}{d-1}}}
\le s(\beta,\alpha)\le\frac{s_0}{s_1}.
$$
The theorem now follows from the fact that $R(\alpha,\beta)=\frac{1}{s(\beta,\alpha)}$, (\ref{eq100}) and Theorem \ref{TheoremG}.
\end{proof}

The bounds of  Theorems \ref{TheoremG} and \ref{TheoremH}   are exactly those obtained by Teissier in Proposition 3.2 \cite{T} when $X$ is an integral projective surface 
 and $\alpha,\beta$ are line bundles on $X$ with $(\alpha^2)>0$, $(\beta^2)>0$ and with $\alpha+\beta$ is ample.  His $s_i$ is our $s_{d-i}$.

We immediately obtain the following corollary.

\begin{Corollary}\label{Bonnessen}(Bonnesen's inequality)
 Suppose that $X$ is a complete surface over a field $k$, and that $\alpha,\beta\in N^1(X)$ are nef with $(\alpha^2)>0$, $(\beta^2)>0$. Then
 $$
 \frac{s_0^2}{4}(R(\alpha;\beta)-r(\alpha;\beta))^2\le s_1^2-s_0s_2
 $$
 \end{Corollary}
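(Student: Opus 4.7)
The plan is to extract Bonnesen's inequality by specializing Theorem \ref{TheoremH} to $d=2$ and simplifying the resulting algebraic expression. In dimension two, the bounds of \ref{TheoremH} become
\begin{equation*}
\frac{s_1 - \sqrt{s_1^2 - s_0 s_2}}{s_0} \le r(\alpha;\beta) \quad\text{and}\quad R(\alpha;\beta) \le \frac{s_2}{s_1 - \sqrt{s_1^2 - s_0 s_2}}.
\end{equation*}
The quantity $D := \sqrt{s_1^2 - s_0 s_2}$ is real and nonnegative by the Khovanskii-Teissier inequality (\ref{eq100}), and $s_1 - D > 0$ because $s_0, s_2 > 0$ (nef and big classes have positive top self-intersection, Lemma \ref{Lemma62}), so the denominators make sense.

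From these estimates I would write
\begin{equation*}
R(\alpha;\beta) - r(\alpha;\beta) \le \frac{s_2}{s_1 - D} - \frac{s_1 - D}{s_0}.
\end{equation*}
The key algebraic step is to rationalize the first fraction: multiplying numerator and denominator by $s_1 + D$ gives
\begin{equation*}
\frac{s_2}{s_1 - D} = \frac{s_2(s_1 + D)}{s_1^2 - D^2} = \frac{s_2(s_1 + D)}{s_0 s_2} = \frac{s_1 + D}{s_0},
\end{equation*}
using $D^2 = s_1^2 - s_0 s_2$. Subtracting then yields $R(\alpha;\beta) - r(\alpha;\beta) \le \dfrac{2D}{s_0}$.

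Squaring and multiplying by $s_0^2/4$ gives exactly
\begin{equation*}
\frac{s_0^2}{4}\bigl(R(\alpha;\beta) - r(\alpha;\beta)\bigr)^2 \le D^2 = s_1^2 - s_0 s_2,
\end{equation*}
which is the claim. There is essentially no obstacle beyond verifying that $s_1 > D$ and that the two explicit bounds in Theorem \ref{TheoremH} cleanly combine; the miraculous cancellation $s_1^2 - D^2 = s_0 s_2$ is what collapses the difference of bounds into a multiple of $D$.
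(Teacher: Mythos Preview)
Your argument is correct and is exactly the intended derivation: the paper itself gives no explicit proof, stating only that the corollary follows immediately from Theorem~\ref{TheoremH}, and your specialization to $d=2$ together with the rationalization $\frac{s_2}{s_1-D}=\frac{s_1+D}{s_0}$ is precisely how one unpacks that claim. The checks that $D\ge 0$ (from (\ref{eq100})), that $s_1-D>0$ (from $s_0s_2>0$, Lemma~\ref{Lemma62}), and that $R-r\ge 0$ (from the middle inequalities of Theorem~\ref{TheoremH}) so that squaring is legitimate, are all in order.
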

 
Corollary \ref{Bonnessen} is obtained by Teissier in  \cite{T} when 
$X$ is an integral projective surface and $\alpha,\beta$ are line bundles on $X$ with $(\alpha^2)>0$, $(\beta^2)>0$ and with $\alpha+\beta$ is ample. Again, 
 his $s_i$ is our $s_{d-i}$.
 
 The following theorem is proven in Theorem D of \cite{BFJ} when $k$ is an algebraically closed field of characteristic zero.

\begin{Theorem}\label{TheoremD} Suppose that $\alpha,\beta\in N^1(X)$ are nef with $(\alpha^d)>0$, $(\beta^d)>0$  on a complete variety $X$ of dimension $d$ over a field $k$. Then the following are equivalent:
\begin{enumerate}
\item[i)] $\alpha$ and $\beta$ satisfy the equivalent conditions of Proposition \ref{Prop63}
\item[ii)]  $\alpha$ and $\beta$ are proportional in $N^1(X)$.
\end{enumerate}
\end{Theorem}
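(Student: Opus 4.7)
My plan is to prove the two implications separately. The direction (ii) $\Rightarrow$ (i) is a direct calculation: if $\alpha = c\beta$ for some $c > 0$ (necessarily positive since both classes are nef with positive top self-intersection), then $s_i = c^i s_0$, and each of the four conditions of Proposition \ref{Prop63} reduces to a trivial identity in $c$; for example condition 2) becomes $s_i^d = c^{id}s_0^d = s_0^{d-i}(c^d s_0)^i = s_0^{d-i} s_d^i$.

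For the direction (i) $\Rightarrow$ (ii), I will assume condition 3) of Proposition \ref{Prop63}, namely $s_{d-1}^d = s_0 s_d^{d-1}$; by that proposition all of 1)--4) then hold. Applying Diskant's inequality (Theorem \ref{TheoremF}) with $s = r(\alpha;\beta) = s(\alpha,\beta)$, condition 3) yields $s_{d-1}^{d/(d-1)} = s_d \cdot s_0^{1/(d-1)}$, so the left-hand side of the inequality vanishes. The right-hand side is a non-negative $d$-th power — the base $s_{d-1}^{1/(d-1)} - s\cdot s_0^{1/(d-1)}$ is non-negative by (\ref{eq110}) — so it must also vanish, giving
\begin{equation*}
r(\alpha;\beta) = (s_{d-1}/s_0)^{1/(d-1)}.
\end{equation*}
Applying the same reasoning to the pair $(\beta, \alpha)$ and using condition 2) for $i = 1$, which reads $s_1^d = s_0^{d-1} s_d$, I likewise obtain $s(\beta,\alpha) = (s_1/s_d)^{1/(d-1)}$ and therefore $R(\alpha;\beta) = 1/s(\beta,\alpha) = (s_d/s_1)^{1/(d-1)}$.

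To conclude, I will show $r(\alpha;\beta) = R(\alpha;\beta)$. Multiplying the cases $i = 1$ and $i = d-1$ of condition 2) yields $(s_1 s_{d-1})^d = (s_0 s_d)^d$, hence $s_1 s_{d-1} = s_0 s_d$ and $s_{d-1}/s_0 = s_d/s_1$. Thus both radii coincide at a common value $c$. Since $\mbox{Psef}(X)$ is closed, the supremum defining $r$ is attained, so $\alpha - c\beta$ is psef; similarly $s(\beta,\alpha) = 1/c$ means $\beta - (1/c)\alpha$ is psef, which after rescaling by $c$ gives $c\beta - \alpha$ psef. Thus both $\alpha - c\beta$ and its negative lie in the psef cone of $N^1(X)$; since this cone is strict, we conclude $\alpha = c\beta$.

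The only obstacle is the careful bookkeeping around the two invocations of Diskant's inequality and the translation between $R(\alpha;\beta)$, $r(\alpha;\beta)$ and the slopes $s(\alpha,\beta)$, $s(\beta,\alpha)$; the genuine content is already packaged in Theorem \ref{TheoremF} and Proposition \ref{Prop63}.
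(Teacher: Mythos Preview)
Your proof is correct and follows essentially the same route as the paper. The paper's argument is slightly more streamlined: it first normalizes so that $(\alpha^d)=(\beta^d)=1$, observes that then all $s_i=1$, and invokes Theorem~\ref{TheoremH} (which already packages both applications of Diskant's inequality and the intermediate bounds) to get $r(\alpha;\beta)=R(\alpha;\beta)=1$ in one step; you instead apply Theorem~\ref{TheoremF} twice by hand without normalizing, which amounts to re-deriving the relevant piece of Theorem~\ref{TheoremH}. The final step---that $r=R=c$ forces $\alpha=c\beta$ via strictness of the pseudoeffective cone---is exactly what the paper records just before Theorem~\ref{TheoremF} as ``$\alpha=\beta$ if and only if $s(\alpha,\beta)=s(\beta,\alpha)=1$''.
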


\begin{proof}
Suppose that $\alpha$ and $\beta$ satisfy the equivalent conditions of Proposition \ref{Prop63}. Since the intersection products are homogeneous, we can assume that $(\alpha^d)=(\beta^d)=1$. Then $s_i=1$ for all $i$. Theorem \ref{TheoremH} then implies that
$r(\alpha;\beta)=R(\alpha;\beta)=1$. Thus $s(\alpha,\beta)=s(\beta,\alpha)=1$ and so $\alpha=\beta$.
\end{proof}

 Teissier observes in \cite{T} that Theorem \ref{TheoremD} is proven in some cases for surfaces in Expose XIII of \cite{SGA6}.

\begin{Remark}
The conclusions of Theorem \ref{TheoremD} do not hold if $\alpha$ and $\beta$ are only nef and not big. A simple example is obtained by letting $W$ be any $(d-1)$-dimensional projective variety with $\dim N^1(W)>1$ and letting $\overline\alpha$ and $\overline\beta$ be 
very ample Cartier divisors which are not proportional in $N^1(W)$. Let $X=W\times_k\PP^1$ with projection $\pi:X\rightarrow W$.
Let $\alpha=\pi^*(\overline\alpha)$ and $\beta=\pi^*(\overline\beta)$. Then
$$
s_i=(\alpha^i\cdot\beta^{d-i})=0
$$
for all $i$ (by Propositions I.2.1 and I.2.6 \cite{K}), but $\alpha$ and $\beta$ are not proportional.

Theorem \ref{TheoremD} does hold if $X$ is an integral (possibly not reduced) proper scheme over a field $k$ and $\alpha|X_{\rm red}$
and $\beta|X_{\rm red}$ are big, since the theorem holds on the variety $D_{\rm red}$, and $X$ is a multiple of $X_{\rm red}$ as a cycle.
\end{Remark}


\begin{thebibliography}{1000000000}

\bibitem{BS} M. Beltrametti and A. Sommese, The adjunction theory of complex projective varieties, Walter de Gruyter and Co., Berlin, 1995.

\bibitem{Bo} T. Bonnesen, Les probl\`emes des isop\'erim\`etres et des is\'epiphanes, p. 175, Gauthier-Villars, 1929.

\bibitem{SGA6} Berthelot, Grothendieck, Illusie, SGA 6, Springer Lecture Notes on Mathematics 225.

\bibitem{BFJ} S. Bouksom, C. Favre, M. Jonsson, Differentiability of volumes of divisors and a problem of Teissier, Journal of Algebraic Geometry 18 (2009), 279 - 308.

\bibitem{BH} W. Bruns and J. Herzog, Cohen Macaulay Rings, Oxford Univ. Press, 1993

\bibitem{CJS} V. Cossart, U. Jannsen and S. Saito, Canonical embedded and non-embedded resolution of singularities for excellent two-dimensional schemes, arXiv:0905.2191

\bibitem{C1} S.D. Cutkosky, The asymptotic growth of graded linear series on arbitrary projective schemes, arXiv:1212.6186

\bibitem{C} S.D. Cutkosky, Multiplicities of graded families of linear series and ideals, arXiv:1301:5613

\bibitem{DJ} A. J. de Jong, Smoothness, semistability and alterations, Inst. Hautes \'Etudes Sci. Publ. Math. 83 (1996).

\bibitem{D} J. Dugundji, Topology, Prentice Hall, 1966.

\bibitem{Di} V. Diskant, A generalization of Bonnesen's inequalities, Soviet Math. Dokl. 14 (1973), 1728-1731.

\bibitem{ELMNP1} L. Ein, R. Lazarsfeld, M. Mustata, Nakamaye and M. Popa, Asymptotic invariants of line bundles, Pure Appl. Math. Q. 1 (2005), 379-403.

\bibitem{FOV} H. Flenner, L. O'Carroll, W. Vogel, Joins and Intersections, Springer, 1999.

\bibitem{F} T. Fujita, Semipositive line bundles, J. Fac. Sci. Univ. Tokyo 30 (1983), 353-378. 

\bibitem{F2} T. Fujita, Approximating Zariski decomposition of big line bundles, Kodai Math. J. 17 (1994), 1-3.

\bibitem{Fu} W. Fulton, Intersection Theory, Springer Verlag, 1984.

\bibitem{F1} W. Fulton, Introduction to toric varieties, Annals of Math. Studies vol 131, Princeton Univ. Press,  1993.

\bibitem{H} R. Hartshorne, Algebraic Geometry, Springer, 1977.


\bibitem{J} N. Jacobson, Basic Algebra I, Freeman, 1985.


\bibitem{K} S. Kleiman, Toward a numerical theory of ampleness, Annals of Math. 84 (1966), 293 - 344.

\bibitem{K2} S. Kleiman, The Picard Scheme, Fundamental algebraic geometry, 235-321, Math. Surveys Monogr., 123 Amer. Math. Soc., 2005.

\bibitem{L} R. Lazarsfeld, Positivity in Algebraic Geometry, I and II, Ergebnisse der Mathematik und ihrer Grenzgebiete, vols 48 and 49, Springer Verlag, Berlin 2004.
\bibitem{LM} R. Lazarsfeld and M. Mustata, Convex bodies associated to linear series, Ann. Sci. Ec. Norm. Super 42 (2009) 783 - 835.

\bibitem{Li} J. Lipman, Desingularization of two-dimensional schemes, Ann. Math. 107 (1978) 151-207.

\bibitem{L2} J. Lipman, Rational singularities, with applications to algebraic surfaces and unique factorization, Inst. Hautes \'Etudes Sci. Publ. Math. 36 (1969) 195-279.

\bibitem{M} J. Munkres, Topology, a first course, Prentice-Hall, 1975.

\bibitem{Sn} E. Snapper, Polynomials associated with divisors, J. Math. and Mech., 9 (1960), 123 - 129.

\bibitem{Ta} S. Takagi, Fujita's approximation theorem in positive characteristics, J. Math. Kyoto Univ, 47 (2007), 179 - 202.

\bibitem{T} B. Teissier, Bonnessen-type inequalities in algebraic geometry, I. Introduction to the problem, in Seminar on Differential Geometry, 85 - 105, Ann. Math. Studies 102, Princeton Univ. Press, 1982.

\bibitem{T2} B. Teissier, Sur une in\'egalit\'e \`a la Minkowski pour les multiplicit\'es, Ann. Math. 106 (1977), 38-44.



\bibitem{ZS2} O. Zariski and P. Samuel, Commutative Algebra Vol II, Van Nostrand, 1960.


\end{thebibliography}
\end{document}